%%%%%%%%%%%%%%%%%%%%%%% file template.tex %%%%%%%%%%%%%%%%%%%%%%%%%
%
% This is a general template file for the LaTeX package SVJour3
% for Springer journals.          Springer Heidelberg 2010/09/16
%
% Copy it to a new file with a new name and use it as the basis
% for your article. Delete % signs as needed.
%
% This template includes a few options for different layouts and
% content for various journals. Please consult a previous issue of
% your journal as needed.
%
%%%%%%%%%%%%%%%%%%%%%%%%%%%%%%%%%%%%%%%%%%%%%%%%%%%%%%%%%%%%%%%%%%%
%
% First comes an example EPS file -- just ignore it and
% proceed on the \documentclass line
% your LaTeX will extract the file if required
%\begin{filecontents*}{example.eps}
%%!PS-Adobe-3.0 EPSF-3.0
%%%BoundingBox: 19 19 221 221
%%%CreationDate: Mon Sep 29 1997
%%%Creator: programmed by hand (JK)
%%%EndComments
%gsave
%newpath
%  20 20 moveto
%  20 220 lineto
%  220 220 lineto
%  220 20 lineto
%closepath
%2 setlinewidth
%gsave
%  .4 setgray fill
%grestore
%stroke
%grestore
%\end{filecontents*}
%
\RequirePackage{fix-cm}
\documentclass[smallextended]{svjour3}       % onecolumn (second format)
\smartqed  % flush right qed marks, e.g. at end of proof
\usepackage{graphicx}
\usepackage{amsmath}
\usepackage{epstopdf} % needed if you have eps figures in a pdflatex manuscript
\usepackage{mathptmx}      % use Times fonts if available on your TeX system
%

% insert here the call for the packages your document requires
%\usepackage{latexsym}
% etc.
%
% please place your own definitions here and don't use \def but
% \newcommand{}{}
%
% Insert the name of "your journal" with
%\journalname{J. Sci. Comput. }
\usepackage{amsmath,ulem}
\usepackage{stmaryrd}
\usepackage{amssymb}
\usepackage{empheq}
\usepackage{cases}
\usepackage{cite}
\usepackage{caption,graphicx}
\usepackage[none]{hyphenat}%去掉换行连字符，配合\sloppy{}放松间距限制

\usepackage{amsmath}
\usepackage{graphicx}
\usepackage{graphicx,epstopdf}
\usepackage{wrapfig}
\usepackage[caption=false]{subfig}
\usepackage{amssymb}
\usepackage{cases}

\usepackage{enumerate}
\usepackage{stmaryrd}
\usepackage{mathrsfs}
\usepackage{multirow}

\usepackage{indentfirst}

\usepackage{array} % <- Preamble

\usepackage[colorlinks,linkcolor=blue]{hyperref}
\hypersetup{backref=true,citebordercolor={1 1 1},citecolor=blue}
\hypersetup{linkbordercolor={1 1 1},linkcolor=blue,linktocpage=true}

\input psfig.sty

\newcommand{\be}{\begin{equation}}
\newcommand{\ee}{\end{equation}}
\newcommand{\ba}{\begin{array}}
\newcommand{\ea}{\end{array}}
\newcommand{\bea}{\begin{eqnarray}}
\newcommand{\eea}{\end{eqnarray}}
\newcommand{\beas}{\begin{eqnarray*}}
\newcommand{\eeas}{\end{eqnarray*}}

\smartqed  % flush right qed marks, e.g. at end of proof
\numberwithin{equation}{section}
\begin{document}

\title{A structure-preserving parametric finite element method for area-conserved generalized mean curvature flow%\thanks{Grants or other notes
%about the article that should go on the front page should be
%placed here. General acknowledgments should be placed at the end of the article.}
}
%\subtitle{Do you have a subtitle?\\ If so, write it here}

\titlerunning{SP-PFEM for area-conserved generalized curvature flow}        % if too long for running head

\author{Lifang Pei         \and
        Yifei Li %etc.
}

\authorrunning{L.~Pei, and Y.~Li} % if too long for running head

\institute{L.~Pei \at
              School of Mathematics and Statistics, Zhengzhou University, Zhengzhou, P. R. China 450001\\
              \email{plf5801@zzu.edu.cn}           %  \\
%             \emph{Present address:} of F. Author  %  if needed
           \and
           Y.~Li \at
              Department of Mathematics, National
              University of Singapore, Singapore 119076\\
              \email{e0444158@u.nus.edu}
}

\date{Received: date / Accepted: date}
% The correct dates will be entered by the editor

\maketitle

\begin{abstract}
\sloppy{}
We propose and analyze a structure-preserving parametric finite element method (SP-PFEM) to simulate the motion of closed curves governed by area-conserved generalized mean curvature flow in two dimensions (2D). We first present a variational formulation and rigorously prove that it preserves two fundamental geometric structures of the flows, i.e., (a) the conservation of the area enclosed by the closed curve; (b) the decrease of the perimeter of the curve. Then the variational formulation is approximated by using piecewise linear parametric finite elements in space to develop the semi-discrete scheme. With the help of the discrete Cauchy's inequality and discrete power mean inequality, the area conservation and perimeter decrease properties of the semi-discrete scheme are shown. On this basis, by combining the backward Euler method in time and a proper approximation of the unit normal vector, a structure-preserving fully discrete scheme is constructed successfully, which can preserve the two essential geometric structures simultaneously at the discrete level. Finally, numerical experiments test the convergence rate, area conservation, perimeter decrease and mesh quality, and depict the evolution of curves. Numerical results indicate that the proposed SP-PFEM provides a reliable and powerful tool for the simulation of area-conserved generalized mean curvature flow in 2D.
\keywords{area-conserved generalized mean curvature flow \and parametric finite element method \and area conservation \and perimeter decrease}
% \PACS{PACS code1 \and PACS code2 \and more}
\subclass{ 65M60 \and 65M12 \and 35K55 \and 53C44}
\end{abstract}

\section{Introduction}
\sloppy{}
Let $\Gamma(t)$ be a closed curve in two dimensions, which is represented by $\Gamma(t)=\boldsymbol{X}(\cdot,t)$ with $ \boldsymbol{X}:= \boldsymbol{X}(s,t)=
(x(s,t), y(s,t))^T \in \mathbb{R}^2,$ where $t$ denoting the time and $s$ being the arc length parametrization of $\Gamma(t)$. The motion of $\Gamma(t)$ under the area-conserved generalized mean curvature flow is governed by the following geometric partial differential equation:
\begin{align}\label{Model,cont,eq}
\left\{\begin{array}{ll}
\displaystyle \partial_{t}\boldsymbol{X}
=(\beta\kappa^{\alpha}-\lambda(t))\boldsymbol{n},~~0<s<L(t),~~t>0,\\
\kappa
=-\partial_{ss}\boldsymbol{X}\cdot \boldsymbol{n},
\end{array}\right.
\end{align}
where $L(t):=\int_{\Gamma(t)} 1ds$ and $\kappa:= \kappa(s,t)$ are the perimeter and the mean curvature of $\Gamma(t)$, respectively, $\boldsymbol{n}:=-(\partial_{s}\boldsymbol{X})^{\perp}$ is the outer unit normal to $\Gamma(t)$ with the notation $^{\perp}$ denoting clockwise rotation by $\frac{\pi}{2}$, $\alpha$ and $\beta$ are real numbers satisfying $\alpha\beta<0$, and
\begin{align}\label{Model,cont,lag}
\displaystyle \lambda(t)=\frac{\int_{\Gamma(t) } \beta\kappa^{\alpha} ds}{L(t)}=\frac{\int_{\Gamma(t) } \beta\kappa^{\alpha} ds}{\int_{\Gamma(t)} 1ds},
\end{align} which is specifically chosen such that the area enclosed by $\Gamma(t)$ is conserved.
The initial data for \eqref{Model,cont,eq} is given by
\begin{equation}\label{Model,cont,initial}
\boldsymbol{X}(s,0)=\boldsymbol{X}_{0}(s)=\left(x_{0}(s),y_0(s)\right)^T, \hspace{0.5cm}  0\leq s\leq {L({0})},
\end{equation}
where ${L({0})}$ is the perimeter of the initial curve $\Gamma(0):=\Gamma_{0}$.

Flows of the form \eqref{Model,cont,eq} without the term ${\lambda(t)}$, which are called the generalized mean curvature flow, have been widely studied in the literature. For example, when $\alpha=1$ and $\beta=-1$, this flow is the well-known mean curvature flow. It has been shown in \cite{gage1986heat,grayson1987heat,huisken1984flow} that the embedded curve evolving by the mean curvature flow shrinks to a point in finite time. Taking $0<\alpha\neq1$ and $\beta=-1$, we obtain the powers of mean curvature flow, which can be regarded as natural generalizations of the classical mean curvature flow (see \cite{andrews1998evolving,sapiro1994affine,schulze2005evolution,schulze2008nonlinear,sevcovic2001evolution}) and are fully nonlinear contracting flows for convex curves. In the case $\alpha=-1$ and $\beta=1$, this flow is an expanding flow and coincides with the inverse mean curvature flow \cite{andrews1998evolving,huisken2001inverse,ilmanen2008higher,urbas1991expansion}, a weak notion of which was used to prove the Riemannian Penrose inequality. Urbas proved in \cite{urbas1991expansion} that the curve with the positive mean curvature $\kappa>0$ stays strictly convex and smooth for all time, and exponentially converges, under appropriate rescaling, to a circle. This result can be extended to the case $\beta>0$ and $-1<\alpha<0$ for infinite time or $\alpha<-1$ for finite time (see \cite{andrews1998evolving,gerhardt2014non,scheuer2016pinching,lin2009expanding}). However, all these contracting and expanding flows mentioned above have the property of some sort of singularity formation.

A tempting approach is to directly define a flow by adding a constraining term to prevent the singularity phenomenon. When the constraining term ${\lambda(t)}$ as \eqref{Model,cont,lag} adopted, it can be shown that the flow \eqref{Model,cont,eq} possess two fundamental geometric properties, i.e., (a) the area of the region enclosed by the curve is conserved; (b) the perimeter of the curve decreases in time. In this case, the initial curve governed by this flow exists for all times and converges smoothly to a round circle. For more details, we refer \cite{andrews2021volume,cabezas2010volume,dittberner2021curve,escher1998volume,gage1986area,guo2019family,
huisken1987volume,li2009volume,ma2014non,sinestrari2015convex,tsai2015length} and reference therein. Thus \eqref{Model,cont,eq}--\eqref{Model,cont,lag} can be interpreted as a unified flow model with fixed areas and perimeters decrease properties, which has important applications in many research areas, such as material science and shape recovery in image processing \cite{dolcetta2002area}.

In this paper, we will investigate numerical approximations of the area-conserved generalized mean curvature flow \eqref{Model,cont,eq}--\eqref{Model,cont,lag}, paying particular attention to the two essential geometric structure preserving aspects. In the last decades, for the special case $\alpha=1$ and $\beta=-1$, i.e., area-conserved mean curvature flow, there have been various numerical methods in the literature, e.g., the finite difference method \cite{mayer2000numerical}, the level set method \cite{ruuth2003simple}, MBO method \cite{kublik2011algorithms} and crystalline algorithm \cite{ushijima2004convergence}. However, among them, structure-preserving properties were barely investigated except for the
crystalline algorithm in \cite{ushijima2004convergence}, which was shown to be area-preserving and perimeter decreasing. Since the crystalline algorithm involves the crystalline motion governed by a system of ordinary differential equations, it is no easy task to apply to the generalized model \eqref{Model,cont,eq}--\eqref{Model,cont,lag}. Recently, a parametric finite element
method (PFEM) was developed for the flow \eqref{Model,cont,eq}--\eqref{Model,cont,lag} with $\alpha=1$ and $\beta=-1$ in \cite{barrett2020parametric}. The PFEM was presented by Barrett, Garcke and N\"{u}rnberg (BGN) first for surface diffusion flows in \cite{barrett2007parametric} based on ideas of Dziuk \cite{dziuk1990algorithm}. Different from the parametric methods in \cite{bansch2005finite,deckelnick1995shortening,deckelnick2005computation,m2017approximations}, the PFEM involves an elegant variational formulation that allows tangential degrees of freedom so that the mesh quality can be improved significantly. Up to now, this method has been extensively applied to simulate geometric flows \cite{bao2021symmetrized,bao2017parametric,barrett2007variational,barrett2008numerical,barrett2008parametric,
li2021energy,zhao2021energy}. It is shown in \cite{barrett2020parametric} that the semi-discrete version of the PFEM leads to the equidistribution of mesh points along curves during the evolution, i.e., equal mesh distribution. This property prevents the possible mesh distortion and avoids the complex and undesired re-meshing steps, which plays a vital role in numerical simulations. Moreover, the PFEM in \cite{barrett2020parametric} enjoys unconditional perimeter decrease law. But the fully discrete scheme fails to exactly conserve the enclosed area of the discrete solution, which may lead to a challenge on the accuracy of the numerical solutions. In this situation, it is a natural requirement to develop novel numerical schemes that can inherit the good properties of the PFEM and preserve the enclosed area for the area-conserved generalized mean curvature flow \eqref{Model,cont,eq}--\eqref{Model,cont,lag}.

Recently, for surface diffusion flow, which also has area-conservation and perimeter decrease properties, we notice that a structure-preserving fully discrete PFEM was introduced in \cite{jiang2021perimeter} to preserve the two geometric properties for the discretized solutions. Nevertheless, the mesh quality is not well preserved during time evolution. Subsequently, motivated by the original ideas of \cite{jiang2021perimeter} and BGN \cite{barrett2007parametric}, Bao and Zhao constructed a structure-preserving PFEM in \cite{bao2021structure} for surface diffusion flow. The proposed method not only achieves the exact conservation of the enclosed area and decrease of perimeter, but also has asymptotic equal mesh distribution property, that is to say, mesh points on the polygonal curve tend to be equally distributed during the evolution and eventually be equidistributed. Very recently, by combining the skills from \cite{bao2021structure,jiang2021perimeter} and axisymmetric variational discretizations, a structure-preserving PFEM was developed in \cite{bao2022volume} for axisymmetric geometric evolution equations.

Inspired by \cite{bao2021structure,jiang2021perimeter}, we try to propose a structure-preserving PFEM (SP-PFEM) for the area-conserved generalized mean curvature flow \eqref{Model,cont,eq}--\eqref{Model,cont,lag} such that its two geometric structures are well preserved.
%The main aim of this paper is to propose a structure-preserving PFEM (SP-PFEM) for the area-conserved generalized mean curvature flow \eqref{Model,cont,eq}--\eqref{Model,cont,lag} such that its two geometric structures are well preserved.
We need to deal with troubles and challenges brought by the nonlinear term $\kappa^{\alpha}$ and the nonlocal term $\lambda(t)$, and hope that the SP-PFEM are available for all cases of $\alpha$. We first present a variational formulation and show the area conservation and perimeter decrease properties by utilizing Cauchy's inequality and the power mean inequality.
Then we discretize the variational formulation with the PFEM in space, and prove rigorously that the semi-discrete scheme preserves the two geometric structures with the help of the discrete Cauchy's inequality and discrete power mean inequality. On this basis, by transferring the novel approach in \cite{bao2021structure} to the approximation of the flow \eqref{Model,cont,eq}--\eqref{Model,cont,lag}, we choose the backward Euler method in time and a proper approximation of the unit normal vector combining the information of the curves at the current and next time step to construct a fully discrete scheme. It can be shown that the proposed scheme is area preserving and unconditional perimeter decreasing. Numerical experiments reveal that it also enjoys asymptotic equal mesh distribution property. The main contribution of our work lies in: (I) we establish a unified structure-preserving PFEM for the area-conserved generalized mean curvature flow, which is accurate and efficient in practical simulations; (II) the area conservation and perimeter decrease properties are rigorously proved for variational formulation, semi-discrete scheme and fully discrete scheme.

The rest of the paper is organized as follows. In section 2, we investigate the two geometric structures properties of the variational formulation. In section 3, a PFEM semi-discrete scheme is developed, and its area conservation and perimeter dissipation properties are proved rigorously. In section 4, we propose a SP-PFEM fully-discrete scheme, provide the proof of its structure preserving property, and present iterative methods for solving the resulting nonlinear system. Furthermore, numerical results are reported in section 5 to demonstrate the accuracy and efficiency of the proposed SP-PFEM, and some conclusions are drawn in section 6.

\section{ The variational formulation and its properties}
\setcounter{section}{2} \setcounter{equation}{0}
\sloppy{}
In this section, we present a variational formulation for the area-conserved generalized mean curvature flow \eqref{Model,cont,eq}--\eqref{Model,cont,lag} and establish its area conservation and perimeter decrease properties.

To obtain the variational formulation, a time independent variable $\rho$ is introduced such that $\Gamma(t)$ can be parameterized over the fixed domain $\rho\in \mathbb{I} =\mathbb{R}/\mathbb{Z}=[0,1]$ as
\begin{equation}
\Gamma(t):=\boldsymbol{X}(\cdot,t),\quad \boldsymbol{X}(\cdot, t): \mathbb{I}\to \mathbb{R}^2, \quad(\rho,t)\mapsto \boldsymbol{X}(\rho,t)=\left(x(\rho,t),y(\rho,t)\right)^T.
\end{equation}
Based on this parametrization, the arc length parameter ${s}$ is
computed by ${s}(\rho,t)=\displaystyle \int_ {0}^{\rho}|\partial_{q}\boldsymbol{X}| dq$, and there holds $\partial_{\rho}s=\left|\partial_{\rho}\boldsymbol{X}\right|$, $ds=\partial_{\rho}s d\rho=|\partial_{\rho}\boldsymbol{X}| d\rho$. During the evolution, we assume that the mean curvature $\kappa(\cdot,t)$ of $\Gamma(t)$ satisfies
\begin{equation}\label{assumption on kappa, vari}
    \kappa(\cdot,t)\geq 0\quad \forall\, 0<\alpha\neq 1  \qquad \text{  and  }\qquad \kappa(\cdot,t)>0 \quad\forall\, \alpha<0, \qquad \quad \forall t\geq 0.
\end{equation}
Besides, we further assume that during the evolution, there is a constant $C>1$ such that $1/C\leq |\partial_\rho \boldsymbol{X}|\leq C$, i.e., $\Gamma(t)$ is regular for all $t$.

As a preparation, we define the functional space as
\begin{equation}
L^2_p(\mathbb{I})=\left\{u:\mathbb{I}\rightarrow\mathbb{R}\Big|\int_{\mathbb{I}}|u(\rho)|^2 d\rho < +\infty\right\}
\end{equation}
equipped with the weighted ${L}^2$-inner product with respect to the closed curve $\Gamma(t)$
\begin{equation}
(u,v)_{\Gamma(t)}:=\int_{\Gamma(t)}u(s)v(s) ds=\int_{\mathbb{I}}u(s(\rho,t))v(s(\rho,t))\partial_{\rho}s d\rho,  ~~\forall u,v \in\ {L}_p^2(\mathbb{I}).
\end{equation}
We note that the assumption $1/C\leq |\partial_\rho \boldsymbol{X}|\leq C$ ensures the weighted inner products with respect to different $\Gamma(t)$s are equivalent, and thus the $L^2_p(\mathbb{I})$ can be interpreted as the usual $L^2_p(\mathbb{I})$ space.

At the same time, we introduce the Sobolev space
\begin{align*}
{H}^1_{{p}}(\mathbb{I})=\left\{u:\mathbb{I}\rightarrow\mathbb{R}\Big|u\in{L}^2_p(\mathbb{I}), \partial_{\rho}u\in{L}_p^2(\mathbb{I})\right\}.
\end{align*}
These notations can be easily extended to $\left[{L}^2_{{p}}(\mathbb{I})\right]^2$ and $\left[{H}^1_{{p}}(\mathbb{I})\right]^2$ for vector-valued functions.

By applying the identity $\kappa\boldsymbol{n}=-\partial_{ss}\boldsymbol{X}$ given in \cite{dziuk1990algorithm}, we reformulate \eqref{Model,cont,eq} into
\begin{subequations}
\begin{align}
\label{Model,cont,reformulate1}
&\displaystyle \partial_{t}\boldsymbol{X}\cdot
\boldsymbol{n}=\beta\kappa^{\alpha}-\lambda(t),~~0<s<L(t),\hspace{0.5cm} t>0,\\
\label{Model,cont,reformulate2}
&\kappa \boldsymbol{n}=-\partial_{ss}\boldsymbol{X}.
\end{align}
\end{subequations}

Then by taking the ${L}^2$-inner product of \eqref{Model,cont,reformulate1} with a test function $\phi\in {H}^1_{{p}}(\mathbb{I})$, we have
\begin{equation}
(\partial_{t}\boldsymbol{X}\cdot\boldsymbol{n},\phi)_{\Gamma(t)}-\left(\beta\kappa^{\alpha}-\lambda(t),\phi\right)_{\Gamma(t)}=0.
\end{equation}
Similarly, take the ${L}^2$-inner product of \eqref{Model,cont,reformulate2} with a test function $\boldsymbol{\omega}=(\omega_{1},\omega_{2})^T\in \left[{H}^1_{{p}}(\mathbb{I})\right]^2$ and integrating by parts, we get
\begin{equation}
(\kappa\boldsymbol{n},\boldsymbol{\omega})_{\Gamma(t)}-(\partial_{s}\boldsymbol{X},\partial_{s}\boldsymbol{\omega})_{\Gamma(t)}=0.
\end{equation}
Therefore, the variational formulation of \eqref{Model,cont,eq}--\eqref{Model,cont,lag} with the initial condition \eqref{Model,cont,initial} can be stated as follows: Given the initial curve $\Gamma(0):=\Gamma_0$ with $ \boldsymbol{X}(\cdot, 0)\in \left[{H}^1_{{p}}(\mathbb{I})\right]^2$, find the curve $\Gamma(t):=\boldsymbol{X}(\cdot,t)=\left(x(\cdot,t),y(\cdot,t)\right)^T\in \left[{H}^1_{{p}}(\mathbb{I})\right]^2$ and the mean curvature $\kappa(\cdot, t)\in {H}^1_{{p}}(\mathbb{I})$ such that
\begin{align}\label{Model,vari,eq}
\left\{\begin{array}{ll}
(\partial_{t}\boldsymbol{X}\cdot\boldsymbol{n},\phi)_{\Gamma(t)}-\left(\beta\kappa^{\alpha}-\lambda(t),\phi\right)_{\Gamma(t)}=0,~~~\phi\in {H}^1_{{p}}(\mathbb{I}),\\
(\kappa\boldsymbol{n},\boldsymbol{\omega})_{\Gamma(t)}-(\partial_{s}\boldsymbol{X},\partial_{s}\boldsymbol{\omega})_{\Gamma(t)}=0,
~~~~~~~~~~\boldsymbol{\omega}\in [{H}^1_{{p}}(\mathbb{I})]^2.
\end{array}\right.
\end{align}

Let $\Omega(t)$ be the region enclosed by the curve $\Gamma(t)$ and
\begin{align}
A(t):=\int_{\Omega(t)} dxdy=\int_{0}^{{L}(t)}{y}(s,t)\partial_{s}x(s,t)ds,\hspace{0.5cm}t\geq 0
\end{align}
be the area of $\Omega(t)$. Then we have

\begin{theorem}[Area conservation and perimeter decrease]\label{Them: Area-perimeter vari} \\
Let $\left(\boldsymbol{X}(\cdot,t),\kappa(\cdot,t)\right)\in \left[{H}^1_{{p}}(\mathbb{I})\right]^2\times {H}^1_{{p}}(\mathbb{I})$ be a solution of the variational
formulation \eqref{Model,vari,eq} with the assumption \eqref{assumption on kappa, vari} holds on $\kappa{(\cdot,t)}$. Then the area $A(t)$ is conserved and the perimeter $L(t)$ is decreasing, i.e., $\forall t\geq t_{1} \geq 0$, there holds
\begin{equation}\label{preserve, vari}
A(t)=A(t_{1})\equiv A(0):=\int_{0}^{L({0})}y_{0}(s)\partial_{s}x_{0}(s) ds,\qquad L(t)\leq L(t_{1})\leq L(0).
\end{equation}
\end{theorem}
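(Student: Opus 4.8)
The plan is to reduce \eqref{preserve, vari} to the two differential identities $\frac{d}{dt}A(t)=0$ and $\frac{d}{dt}L(t)\le 0$ for every $t\ge 0$, which then give \eqref{preserve, vari} at once upon integrating over $[t_1,t]$. Throughout I would assume the solution is regular enough in time that differentiation under the integral sign is legitimate and that $\partial_{t}\boldsymbol{X}\in[H^1_p(\mathbb{I})]^2$; the hypothesis already supplies $\kappa(\cdot,t)\in H^1_p(\mathbb{I})$, so $\phi=\kappa$ is an admissible test function in \eqref{Model,vari,eq}.

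For the area conservation I would pull $A(t)$ back to the fixed parameter domain, $A(t)=\int_{\mathbb{I}} y\,\partial_{\rho}x\, d\rho$, differentiate in $t$ and integrate by parts in $\rho$ (the boundary contributions cancel by periodicity of $\mathbb{I}=\mathbb{R}/\mathbb{Z}$), obtaining $\frac{d}{dt}A(t)=\int_{\mathbb{I}}\big(\partial_{t}y\,\partial_{\rho}x-\partial_{t}x\,\partial_{\rho}y\big)\,d\rho$. Using $\boldsymbol{n}=-(\partial_{s}\boldsymbol{X})^{\perp}$ and $ds=\partial_{\rho}s\,d\rho$, this equals $(\partial_{t}\boldsymbol{X}\cdot\boldsymbol{n},1)_{\Gamma(t)}$. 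Choosing $\phi\equiv 1$ in the first identity of \eqref{Model,vari,eq} and inserting the definition \eqref{Model,cont,lag} of $\lambda(t)$ gives $(\partial_{t}\boldsymbol{X}\cdot\boldsymbol{n},1)_{\Gamma(t)}=\int_{\Gamma(t)}\beta\kappa^{\alpha}\,ds-\lambda(t)L(t)=0$, so $A(\cdot)$ is constant, equal to $A(0)$ as given.

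For the perimeter decrease I would (i) differentiate $L(t)=\int_{\mathbb{I}}|\partial_{\rho}\boldsymbol{X}|\,d\rho$, using $\partial_{s}\boldsymbol{X}=\partial_{\rho}\boldsymbol{X}/|\partial_{\rho}\boldsymbol{X}|$, to get $\frac{d}{dt}L(t)=(\partial_{s}\boldsymbol{X},\partial_{s}(\partial_{t}\boldsymbol{X}))_{\Gamma(t)}$; (ii) take $\boldsymbol{\omega}=\partial_{t}\boldsymbol{X}$ in the second identity of \eqref{Model,vari,eq}, which identifies this with $(\kappa\boldsymbol{n},\partial_{t}\boldsymbol{X})_{\Gamma(t)}=(\kappa,\partial_{t}\boldsymbol{X}\cdot\boldsymbol{n})_{\Gamma(t)}$ (recall $\kappa$ is scalar); (iii) take $\phi=\kappa$ in the first identity and substitute \eqref{Model,cont,lag}, reaching
\[
\frac{d}{dt}L(t)=\frac{\beta}{L(t)}\left(L(t)\int_{\Gamma(t)}\kappa^{\alpha+1}\,ds-\int_{\Gamma(t)}\kappa^{\alpha}\,ds\int_{\Gamma(t)}\kappa\,ds\right).
\]
It then remains to fix the sign of the parenthesis. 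Under \eqref{assumption on kappa, vari}, $\kappa^{\alpha}$ is well defined and the maps $r\mapsto r^{\alpha}$ and $r\mapsto r$ are monotone in the same direction when $\alpha>0$ and in opposite directions when $\alpha<0$; by Chebyshev's integral inequality on $\Gamma(t)$ — equivalently, from the identity $L\int fg-\int f\int g=\tfrac12\iint(f(x)-f(y))(g(x)-g(y))\,ds(x)\,ds(y)$ with $f=\kappa^{\alpha}$, $g=\kappa$, or from two applications of Hölder's (Cauchy's) inequality together with the power-mean inequality — the parenthesis is $\ge 0$ when $\alpha>0$ and $\le 0$ when $\alpha<0$. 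Since $\alpha\beta<0$, the prefactor $\beta/L(t)$ carries the opposite sign, whence $\frac{d}{dt}L(t)\le 0$.

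The routine parts are the two time differentiations and the three test-function substitutions. The step I expect to be the main obstacle is the last one: proving the correlation inequality uniformly over all admissible pairs $(\alpha,\beta)$ with $\alpha\beta<0$ (in particular covering $0<\alpha<1$ and $\alpha<-1$), which is precisely where the positivity assumption \eqref{assumption on kappa, vari} on $\kappa$ is essential so that $\kappa^{\alpha}$ is meaningful and has the required monotonicity. A secondary technical point is justifying that $\boldsymbol{\omega}=\partial_{t}\boldsymbol{X}$ is an admissible test function; by \eqref{Model,cont,eq}, $\partial_{t}\boldsymbol{X}=(\beta\kappa^{\alpha}-\lambda(t))\boldsymbol{n}$, so this reduces to the regularity of $\kappa$ and of the unit normal.
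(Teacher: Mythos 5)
Your argument is correct, and its overall skeleton (pull $A(t)$ and $L(t)$ back to $\mathbb{I}$, differentiate in $t$, then test \eqref{Model,vari,eq} with $\phi\equiv 1$, and with $\boldsymbol{\omega}=\partial_t\boldsymbol{X}$, $\phi=\kappa$) is exactly the paper's. Where you genuinely diverge is the final sign argument for $\frac{dL}{dt}$: the paper splits into cases, using Cauchy's inequality for $|\alpha|=1$ and the power mean inequality (Lemma \ref{lem: power mean int}, applied twice with exponents $1$ versus $\frac{\alpha+1}{\alpha}$ and $\alpha+1$) for $0<|\alpha|\neq 1$, whereas you invoke the Chebyshev correlation identity
\[
L\!\int_{\Gamma}fg\,ds-\int_{\Gamma}f\,ds\int_{\Gamma}g\,ds=\tfrac12\iint_{\Gamma\times\Gamma}\bigl(f(x)-f(y)\bigr)\bigl(g(x)-g(y)\bigr)\,ds(x)\,ds(y)
\]
with $f=\kappa^{\alpha}$, $g=\kappa$. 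Since both are compositions of monotone maps with the same function $\kappa$, the integrand has a definite sign: nonnegative for $\alpha>0$ (including $\alpha=1$, where no sign condition on $\kappa$ is even needed, matching \eqref{assumption on kappa, vari}) and nonpositive for $\alpha<0$ with $\kappa>0$; combined with $\alpha\beta<0$ this gives $\frac{dL}{dt}\le 0$ in one stroke. This buys a unified treatment of all admissible $(\alpha,\beta)$ and avoids the power mean machinery, at the mild cost of introducing a double integral; the paper's power-mean route, on the other hand, is the one that transfers verbatim to the mass-lumped discrete setting in Theorems \ref{Them: Area-perimeter semi} and \ref{Them: Area-perimeter full} (though a discrete Chebyshev sum inequality would work there too). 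Your side remarks on admissibility of $\boldsymbol{\omega}=\partial_t\boldsymbol{X}$ and on differentiating under the integral are handled implicitly in the paper as well, so no gap there.
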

\begin{proof}\quad Differentiating $A(t)$ with respect to $t$ and integrating by parts, we obtain
\begin{equation}\label{Area-different,vari}
\begin{aligned}
\frac{dA(t)}{dt}&=\frac{d}{dt}\int_{0}^{L(t)}y(s,t)\partial_{s}x(s,t)ds=\frac{d}{dt}\int_{0}^{1}y(\rho, t)\partial_{\rho}x(\rho,t)d\rho\\
&=\int_{0}^{1}(\partial_{t}y\partial_{\rho}x+y\partial_{t}\partial_{\rho } x)d\rho
=\int_{0}^{1}(\partial_{t}y\partial_{\rho}x-\partial_{\rho} y\partial_{t}x)d\rho+(y\partial_{t}x)|_{\rho=0}^{\rho=1}\\
&=\int_{0}^{1}(\partial_{t}\boldsymbol{X})\cdot(-\partial_{\rho}y,\partial_{\rho}x)d\rho=(\partial_{t}\boldsymbol{X}\cdot\boldsymbol{n},1)_{\Gamma(t)}
=(\beta\kappa^{\alpha}-\lambda(t),1)_{\Gamma(t)}\\&=\beta\int_{\Gamma(t)}\kappa^{\alpha}ds-\beta\frac{\int_{\Gamma(t) } \kappa^{\alpha} ds}{L(t)}\int_{\Gamma(t)}1 ds\equiv 0,
\end{aligned}
\end{equation}
where we used the first equation in \eqref{Model,vari,eq} by taking $\phi\equiv1$. Obviously, formula \eqref{Area-different,vari} immediately implies the area-preserving property in \eqref{preserve, vari}.

Similarly, differentiating $L(t)$ with respect to $t$ and integrating by parts, we have
\begin{equation}\label{perimeter-different,vari}
\begin{aligned}
\frac{dL(t)}{dt}&=\frac{d}{dt}\int_0^{L(t)}1ds=\frac{d}{dt}\int_{0}^{1}\partial_{\rho}sd\rho=\frac{d}{dt}\int_{0}^{1}|\partial_{\rho}\boldsymbol{X}|d\rho\\
&=\int_{0}^{1}\frac{\partial_{\rho}\boldsymbol{X}\cdot(\partial_{\rho}\partial_{t}\boldsymbol{X})}{|\partial_{\rho}\boldsymbol{X}|}d\rho=\int_{0}^{1}\partial_{\rho}\boldsymbol{X}\partial_{s}\rho\cdot(\partial_{s}\partial_{t}\boldsymbol{X})\partial_{\rho}sd\rho\\
&=\int_{\Gamma(t)}\partial_{s}\boldsymbol{X}\cdot(\partial_{s}\partial_{t}\boldsymbol{X})ds=(\partial_{s}\boldsymbol{X},\partial_{s}\partial_{t}\boldsymbol{X})_{\Gamma(t)}.
\end{aligned}
\end{equation}
Setting $\boldsymbol{\omega}=\partial_{t}\boldsymbol{X}$ and $\phi=\kappa$ in the variational formulation \eqref{Model,vari,eq}, we get
 \begin{equation}\label{perimeter-different,vari2}
\begin{aligned}
\frac{dL(t)}{dt}&=(\kappa\boldsymbol{n},\partial_{t}\boldsymbol{X})_{\Gamma(t)}=(\partial_{t}\boldsymbol{X}\cdot\boldsymbol{n},\kappa)_{\Gamma(t)}
=(\beta\kappa^{\alpha}-\lambda(t),\kappa)_{\Gamma(t)}\\&\displaystyle =L(t)\beta\left[\frac{1}{L(t)}\int_{\Gamma(t)}\kappa^{\alpha+1}ds
-\frac{1}{L(t)^2}\left(\int_{\Gamma(t)}\kappa^{\alpha}ds\right)\left(\int_{\Gamma(t)}\kappa ds\right)\right].
\end{aligned}
\end{equation}

When $\alpha=1$ and $\beta<0$, for any $\kappa\in \mathbb{R}$, by Cauchy's inequality, we have
\begin{equation}\label{int alpha=1}
\left(\int_{\Gamma(t)}\kappa ds\right)^2\leq \int_{\Gamma(t)}1 ds\,\int_{\Gamma(t)}\kappa^2 ds,
\end{equation}
thus
\begin{equation}
\begin{array}{ll}
\displaystyle \frac{dL(t)}{dt}& \displaystyle=L(t)\beta\left[\frac{1}{L(t)}\int_{\Gamma(t)}\kappa^{2}ds
-\frac{1}{L(t)^2}\left(\int_{\Gamma(t)}\kappa ds\right)\left(\int_{\Gamma(t)}\kappa ds\right)\right]\\[4mm]
&\displaystyle \leq L(t)\beta\left[\frac{1}{L(t)}\int_{\Gamma(t)}\kappa^{2}ds
-\frac{1}{L(t)^2}\left(\int_{\Gamma(t)}1 ds\right)\left(\int_{\Gamma(t)}\kappa^2 ds\right)\right]
{=}0.
\end{array}
\end{equation}

When $\alpha=-1, \kappa>0$ and $\beta>0$, by Cauchy's inequality, we have
\begin{equation}\label{int alpha=-1}
\left(\int_{\Gamma(t)}1 ds\right)^2\leq \int_{\Gamma(t)}\frac{1}{\kappa} ds\int_{\Gamma(t)}\kappa ds,
\end{equation}
which combines with $\beta>0$  also implies
\begin{equation}
\begin{array}{ll}
\displaystyle \frac{dL(t)}{dt}=L(t)\beta\left[\frac{1}{L(t)}\int_{\Gamma(t)}1ds
-\frac{1}{L(t)^2}\left(\int_{\Gamma(t)}\frac{1}{\kappa}ds\right)\left(\int_{\Gamma(t)}\kappa ds\right)\right]
\leq0.
\end{array}
\end{equation}
Therefore the perimeter decrease property in \eqref{preserve, vari} holds for $|\alpha|=1$.

When $0<|\alpha|\neq 1$, to show $\frac{dL(t)}{dt}\leq0$, we need the power mean inequality \cite{hardy1934inequalities}[Page 144]:
\begin{lemma}[Power mean inequality]\label{lem: power mean int}
For any finite interval $E=(a, b)$, and the non-zero parameters $p,q$ satisfy $-\infty<p<q<\infty$. Suppose $f(x)$ is bounded and $f\geq 0$ in $E$ if $p>0$, and $f>0$ in $E$ if $p<0$,  we have
\begin{equation}\label{eq: power mean int}
    \left(\frac{\int_a^b f^p(x)dx}{|b-a|}\right)^{\frac{1}{p}}\leq \left(\frac{\int_a^b f^q(x)dx}{|b-a|}\right)^{\frac{1}{q}}.
\end{equation}
\end{lemma}

For the case $\alpha>0, \alpha\neq 1$ and $\kappa\geq 0$, we know $\beta<0$. By taking $E:=\Gamma(t),$ $(f, p, q)=(\kappa^\alpha, 1, \frac{\alpha+1}{\alpha})$ and $(\kappa, 1, \alpha+1)$, respectively, we obtain
%\begin{subequations}
%\label{int alpha>0}
%\begin{align}
%\label{int alpha>0 eq1}
%    &\left(\frac{\int_{\Gamma(t)} \kappa^\alpha ds}{L(t)}\right)\leq \left(\frac{\int_{\Gamma(t)} \kappa^{\alpha+1}ds}{L(t)}\right)^{\frac{\alpha}{\alpha+1}},\\
%\label{int alpha>0 eq2}
%    &\left(\frac{\int_{\Gamma(t)} \kappa ds}{L(t)}\right)\leq \left(\frac{\int_{\Gamma(t)} \kappa^{\alpha+1}ds}{L(t)}\right)^{\frac{1}{\alpha+1}}.
%\end{align}
%\end{subequations}
\begin{equation} \label{int alpha>0}
\frac{\int_{\Gamma(t)} \kappa^\alpha ds}{L(t)}\leq \left(\frac{\int_{\Gamma(t)} \kappa^{\alpha+1}ds}{L(t)}\right)^{\frac{\alpha}{\alpha+1}},\quad
\frac{\int_{\Gamma(t)} \kappa ds}{L(t)}\leq \left(\frac{\int_{\Gamma(t)} \kappa^{\alpha+1}ds}{L(t)}\right)^{\frac{1}{\alpha+1}}.
\end{equation}
By substituting \eqref{int alpha>0} in \eqref{perimeter-different,vari2}, we have
%\begin{align}
%    \frac{dL(t)}{dt}&=L(t)\beta \left[\left(\frac{\int_{\Gamma(t)} \kappa^{\alpha+1}ds}{L(t)}\right)^{\frac{\alpha}{\alpha+1}}\left(\frac{\int_{\Gamma(t)} \kappa^{\alpha+1}ds}{L(t)}\right)^{\frac{1}{\alpha+1}}\right.\\
%    &-\frac{1}{L(t)^2}\left(\int_{\Gamma(t)}\kappa^{\alpha}ds\right)\left(\int_{\Gamma(t)}\kappa ds\right)\right]\leq 0.
%\end{align}
\begin{equation}
\begin{aligned}
    \frac{dL(t)}{dt}&=L(t)\beta \left[\left(\frac{\int_{\Gamma(t)} \kappa^{\alpha+1}ds}{L(t)}\right)^{\frac{\alpha}{\alpha+1}}\left(\frac{\int_{\Gamma(t)} \kappa^{\alpha+1}ds}{L(t)}\right)^{\frac{1}{\alpha+1}}\right.\\
    &\quad\quad\left.-\frac{1}{L(t)^2}\left(\int_{\Gamma(t)}\kappa^{\alpha}ds\right)\left(\int_{\Gamma(t)}\kappa ds\right)\right]\leq 0.
\end{aligned}
\end{equation}
For the case $\alpha<0, \alpha\neq-1$ and $\kappa>0$, we know $\beta>0$. By taking $E:=\Gamma(t),$ $(f, p, q)=(\kappa^\alpha, \frac{\alpha+1}{\alpha},1)$ and $(\kappa, \alpha+1,1)$, respectively, we obtain
\begin{equation} \label{int alpha<0}
\left(\frac{\int_{\Gamma(t)} \kappa^{\alpha+1}ds}{L(t)}\right)^{\frac{\alpha}{\alpha+1}}\leq \frac{\int_{\Gamma(t)} \kappa^\alpha ds}{L(t)},\quad
    \left(\frac{\int_{\Gamma(t)} \kappa^{\alpha+1}ds}{L(t)}\right)^{\frac{1}{\alpha+1}}\leq \frac{\int_{\Gamma(t)} \kappa ds}{L(t)}.
\end{equation}
Similarly, by substituting \eqref{int alpha<0} in \eqref{perimeter-different,vari2}, we have $\frac{dL(t)}{dt}\leq 0$.
Thus, the perimeter decrease property in \eqref{preserve, vari} is proven for all possible $\alpha$ cases. The proof is completed.
\end{proof}
\section{ A PFEM semi-discrete scheme and its properties}
\setcounter{section}{3} \setcounter{equation}{0}
\sloppy{}
In this section, we present a parametric finite element method $\text{(PFEM)}$ with conforming piecewise linear elements to discretize the variational formulation \eqref{Model,vari,eq} in space direction, and show that the scheme preserves the area conservation and perimeter decrease properties.
\subsection{The semi-discretization in space}
\par
Assume that the interval $\mathbb{I}$ is divided into a uniform partition $\mathbb{I}=\bigcup_{j=1}^{N}I_{j}$ with the grid points $\left\{\rho_{j}:=jh\right\}_{j=0}^N$, where $\rho_{0}=\rho_{N}$ by periodic, the subintervals ${I}_{j}=[\rho_{j-1},\rho_{j}]$ for $j=1,2,\cdot\cdot\cdot,N$, $N\geq 3$ is a positive integer and $h=1/N$. Let $\mathcal{P}_{1}({I}_{j})$ be the space of all polynomials with degree at most $1$ over ${I}_{j}$, then the conforming linear finite element space can be described by
\begin{align*}
\mathbb{V}^h:&=\left\{u^h\in C(\mathbb{I})\Big|\,\color{black}u^h|_{I_{j}}\in\mathcal{P}_{1}({I}_{j}),\quad\forall j=1,2,\cdot\cdot\cdot,N\right\}.
\end{align*}
\par
Let $\Gamma^h(t):=\boldsymbol{X}^h(\cdot,t)=\left({x}^h(\cdot,t),{y}^h(\cdot,t)\right)^T\in \left[\mathbb{V}^h\right]^2$ and $\kappa^h(\cdot, t)\in \mathbb{V}^h$ be the piecewise linear finite element approximation of the solution $(\boldsymbol{X}(\cdot,t),\kappa(\cdot, t))\in \left[{H}^1_{{p}}(\mathbb{I})\right]^2\times {H}^1_{{p}}(\mathbb{I})$ of the variational formulation \eqref{Model,vari,eq}. In fact, the polygonal curve $\Gamma^h(t)$ consists of ordered line segments $\left\{\boldsymbol{h}_{j}\right\}_{j=1}^{N}$, and we always assume that they satisfy
\begin{equation}
 \begin{aligned}
h_{\min}(t):=\underset{1 \leq j \leq N}{\min}|\boldsymbol{h}_{j}(t)|>0~~\mathrm{with}~~
\boldsymbol{h}_{j}(t):=\boldsymbol{X}^h(\rho_{j},t)-\boldsymbol{X}^h(\rho_{j-1},t),
 \end{aligned}
\end{equation}
where $|\boldsymbol{h}_{j}(t)|$ denotes the length of the vector $\boldsymbol{h}_{j}(t)$. Similar to the continuous equation, we further propose the following assumption on $\kappa^h(\cdot, t)$:
\begin{equation}\label{assumption on kappa, semi}
     \kappa^h(\rho_j,t)\geq 0~ \forall\, 0<\alpha\neq 1 ~ \text{  and  } ~ \kappa^h(\rho_j, t)>0 ~\forall\, \alpha<0, \quad \forall j=0,1,\cdot\cdot\cdot,N, ~t\geq 0.
 \end{equation}

Meanwhile, it is easy to check that the unit tangential vector $\boldsymbol{\tau}^h$ and the outward unit normal vector $\boldsymbol{n}^h$ of the curve $\Gamma^h(t)$ are constant vectors on each interval $I_{j}$. They can be computed on each interval $I_{j}$ as
\begin{equation}
\begin{aligned}
\boldsymbol{\tau}^h|_{I_{j}}=\frac{\boldsymbol{h}_{j}}{\left|\boldsymbol{h}_j\right|}:=\boldsymbol{\tau}_{j}^{h},~~
\boldsymbol{n}^h|_{I_{j}}=-(\boldsymbol{\tau}_{j}^{h})^\perp=-\frac{(\boldsymbol{h}_{j})^\perp}{|\boldsymbol{h}_{j}|}:=\boldsymbol{n}_{j}^{h},\quad\forall j=1,2,\cdot\cdot\cdot,N.
\end{aligned}
\end{equation}
Furthermore, for two piecewise continuous scalar/vector functions $u$ and $v$ defined on $\mathbb{I}$, with possible jumps at the nodes $\{\rho_{j}\}_{j=0}^N$, we introduce the mass lumped inner product $(\cdot,\cdot)_{\Gamma^{h}(t)}^{h}$ over $\Gamma^{h}(t)$ as
\begin{equation}
(u,v)_{\Gamma^{h}}^{h}:=\frac{1}{2}\sum\limits_{j=1}^{N}|\boldsymbol{h}_{j}|\left[(u\cdot v)(\rho_{j}^{-})+(u\cdot v)(\rho_{j-1}^{+})\right],
\end{equation}
where $u(\rho_{j}^{\pm})=\lim\limits_{\rho\rightarrow\rho_{j}^{\pm}}u(\rho)$ for $j=0,1,\cdot\cdot\cdot,N$ are the one-sided limits.

Then a semi-discrete scheme based on the variational formulation \eqref{Model,vari,eq} can be constructed as: Given the initial curve $\Gamma^h(0):=\Gamma_0^{h}$ with $\boldsymbol{X}^h(\cdot,0)\in \left[\mathbb{V}^h\right]^2$, find the closed curve $\Gamma^h(t):=\boldsymbol{X}^h(\cdot,t)=\left(x^h(\cdot,t),y^h(\cdot,t)\right)^T\in \left[\mathbb{V}^h\right]^2$ and the mean curvature $\kappa^h(\cdot,t)\in \mathbb{V}^h$ such that
\begin{align}\label{Model,semi,eq}
\left\{\begin{array}{ll}
(\partial_{t}\boldsymbol{X}^{h}\cdot\boldsymbol{n}^{h},\phi^{h})_{\Gamma^{h}}^h-(\beta(\kappa^{h})^{\alpha}-\lambda^h,\phi^{h})_{\Gamma^{h}}^h=0,~~~\phi^{h}\in \mathbb{V}^h,\\
(\kappa^{h}\boldsymbol{n}^{h},\boldsymbol{\omega}^{h})_{\Gamma^{h}}^h-(\partial_{s}\boldsymbol{X}^{h},\partial_{s}\boldsymbol{\omega}^{h})_{\Gamma^{h}}^h=0,~~~~~~~~~~~~\boldsymbol{\omega}^{h}\in [\mathbb{V}^h]^2,
\end{array}\right.
\end{align}
where $\Gamma^h(0)=\boldsymbol{X}^h(\cdot,0)\in \left[\mathbb{V}^h\right]^2$ is the interpolation of the initial curve $\Gamma_0$ satisfying $\boldsymbol{X}^h(\rho_j,0)=\boldsymbol{X}(\rho_j,0), \forall j=0, 1,2,\ldots, N,$ and
\begin{align}\label{Model,semi,lag}
\displaystyle \lambda^h=\frac{(\beta(\kappa^{h})^{\alpha},1)_{\Gamma^{h}}^h}{(1,1)_{\Gamma^{h}}^h}.
\end{align}
\subsection{Area conservation and perimeter decrease}

Denote the area of the region enclosed by the closed curve $\Gamma^{h}(t)$ by $A^{h}(t)$ and the perimeter of $\Gamma^{h}(t)$ by $L^{h}(t)$, which are defined as follows:
\begin{equation}
A^{h}(t):=\frac{1}{2}\sum_{j=1}^{N}(x_{j}-x_{j-1})(y_{j}+y_{j-1}),\qquad
L^{h}(t):=\sum_{j=1}^{N}|\boldsymbol{h}_{j}(t)|=(1,1)_{\Gamma^{h}}^h,
\end{equation}
where $\boldsymbol{X}_{j}(t)=(x_{j}(t),y_{j}(t))^T:=\boldsymbol{X}^h(\rho_{j},t)$ for $j=0,1,\cdot\cdot\cdot,N$.
Then we have
\begin{theorem}[Area conservation and perimeter decrease]\label{Them: Area-perimeter semi}\\ Let $\left(\boldsymbol{X}^h(\cdot,t),\kappa^h(\cdot,t)\right)\in \left[\mathbb{V}^h\right]^2\times \mathbb{V}^h $ be a solution of the PFEM semi-discrete scheme \eqref{Model,semi,eq} with the assumption \eqref{assumption on kappa, semi} holds on $\kappa^h(\cdot, t)$. Then the area $A^{h}(t)$ is conserved and the perimeter $L^{h}(t)$ is decreasing, i.e., for $t\geq t_{1}\geq 0$, there holds
\begin{equation}\label{Area-preserve, semi}
 A^h(t)= A^h(t_{1})\equiv A^h(0):=\frac{1}{2}\sum_{j=1}^{N}\left[x_{0}(s_{j})-x_{0}(s_{j-1})\right]\left[y_{0}(s_{j})+y_{0}(s_{j-1})\right],
\end{equation}
\begin{equation}\label{perimeter-preserve, semi}
 L^{h}(t)\leq L^{h}(t_{1})\leq L^{h}(0):=\sum_{j=1}^{N}|\boldsymbol{h}_{j}(0)|.
\end{equation}
\end{theorem}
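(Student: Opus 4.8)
\emph{Proof sketch.} The plan is to replay the argument of Theorem~\ref{Them: Area-perimeter vari} at the discrete level: integrals over $\Gamma(t)$ become mass-lumped inner products $(\cdot,\cdot)_{\Gamma^h}^h$, the continuous test functions $\phi\equiv 1$, $\phi=\kappa$, $\boldsymbol{\omega}=\partial_t\boldsymbol{X}$ are replaced by $\phi^h\equiv 1$, $\phi^h=\kappa^h$, $\boldsymbol{\omega}^h=\partial_t\boldsymbol{X}^h$, and Cauchy's inequality and the power mean inequality of Lemma~\ref{lem: power mean int} are replaced by their finite-sum versions.

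\textbf{Area conservation.} First I would differentiate the shoelace area. Writing $\boldsymbol{X}_j=(x_j,y_j)^T$ and $\dot{\boldsymbol{X}}_j=\partial_t\boldsymbol{X}^h(\rho_j,t)$, a direct computation (expanding $A^h$, differentiating in $t$, and regrouping the sum by nodes using the periodicity $\boldsymbol{X}_0=\boldsymbol{X}_N$) gives $\frac{dA^h}{dt}=\frac12\sum_{j=1}^{N}\dot{\boldsymbol{X}}_j\cdot(y_{j-1}-y_{j+1},\,x_{j+1}-x_{j-1})$. On the other hand, since $\boldsymbol{n}^h|_{I_j}=\boldsymbol{n}^h_j=\frac{1}{|\boldsymbol{h}_j|}(y_{j-1}-y_j,\,x_j-x_{j-1})$ and $\partial_t\boldsymbol{X}^h$ is linear on $I_j$ with endpoint values $\dot{\boldsymbol{X}}_{j-1},\dot{\boldsymbol{X}}_j$, the definition of the mass-lumped inner product gives $(\partial_t\boldsymbol{X}^h\cdot\boldsymbol{n}^h,1)_{\Gamma^h}^h=\frac12\sum_j|\boldsymbol{h}_j|\,(\dot{\boldsymbol{X}}_{j-1}+\dot{\boldsymbol{X}}_j)\cdot\boldsymbol{n}^h_j$, which after the same regrouping equals the previous expression. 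Hence $\frac{dA^h}{dt}=(\partial_t\boldsymbol{X}^h\cdot\boldsymbol{n}^h,1)_{\Gamma^h}^h$. Taking $\phi^h\equiv 1$ in the first equation of \eqref{Model,semi,eq} and using the definition \eqref{Model,semi,lag} of $\lambda^h$ yields $(\partial_t\boldsymbol{X}^h\cdot\boldsymbol{n}^h,1)_{\Gamma^h}^h=\beta((\kappa^h)^\alpha,1)_{\Gamma^h}^h-\lambda^h(1,1)_{\Gamma^h}^h=0$, so $A^h(t)\equiv A^h(0)$, proving \eqref{Area-preserve, semi}.

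\textbf{Perimeter decrease.} Differentiating $L^h(t)=\sum_j|\boldsymbol{h}_j(t)|$ gives $\frac{dL^h}{dt}=\sum_j\boldsymbol{\tau}^h_j\cdot\dot{\boldsymbol{h}}_j$ with $\dot{\boldsymbol{h}}_j=\dot{\boldsymbol{X}}_j-\dot{\boldsymbol{X}}_{j-1}$. Because $\partial_s\boldsymbol{X}^h|_{I_j}=\boldsymbol{\tau}^h_j$ and $\partial_s\partial_t\boldsymbol{X}^h|_{I_j}=\dot{\boldsymbol{h}}_j/|\boldsymbol{h}_j|$ are piecewise constant, the mass-lumped inner product of these two quantities is exactly $\sum_j\boldsymbol{\tau}^h_j\cdot\dot{\boldsymbol{h}}_j$, so $\frac{dL^h}{dt}=(\partial_s\boldsymbol{X}^h,\partial_s\partial_t\boldsymbol{X}^h)_{\Gamma^h}^h$. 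Now take $\boldsymbol{\omega}^h=\partial_t\boldsymbol{X}^h$ in the second equation of \eqref{Model,semi,eq}, then $\phi^h=\kappa^h$ in the first; using that the lumped inner product only sees nodal values, so $((\kappa^h)^\alpha\kappa^h,1)_{\Gamma^h}^h=((\kappa^h)^{\alpha+1},1)_{\Gamma^h}^h$, together with \eqref{Model,semi,lag}, one obtains the discrete energy identity
\begin{equation*}
\frac{dL^h}{dt}=\beta L^h(t)\left[\frac{((\kappa^h)^{\alpha+1},1)_{\Gamma^h}^h}{L^h(t)}-\frac{((\kappa^h)^{\alpha},1)_{\Gamma^h}^h\,(\kappa^h,1)_{\Gamma^h}^h}{(L^h(t))^2}\right],
\end{equation*}
the exact analogue of \eqref{perimeter-different,vari2}. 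Then I would write every lumped inner product as a weighted node sum, $(u,1)_{\Gamma^h}^h=\sum_{j=1}^N\varpi_j\,u(\rho_j)$ with positive weights $\varpi_j:=\frac12(|\boldsymbol{h}_j|+|\boldsymbol{h}_{j+1}|)$ (positive by $h_{\min}(t)>0$, periodic indexing) satisfying $\sum_j\varpi_j=L^h(t)$. Setting $u_j:=\kappa^h(\rho_j)$, which is $\ge 0$ for $0<\alpha\neq1$ and $>0$ for $\alpha<0$ by \eqref{assumption on kappa, semi}, the bracket is handled exactly as in the continuous case: for $\alpha=1$ ($\beta<0$) and $\alpha=-1$ ($\beta>0$) one applies the discrete Cauchy inequality $\big(\sum_j\varpi_j a_jb_j\big)^2\le\big(\sum_j\varpi_j a_j^2\big)\big(\sum_j\varpi_j b_j^2\big)$ with $a_j=\sqrt{u_j}$ (resp.\ $1/\sqrt{u_j}$), and for $0<|\alpha|\neq1$ one applies the discrete power mean inequality (the finite-sum form of Lemma~\ref{lem: power mean int}, i.e.\ Jensen's inequality for the probability weights $\varpi_j/L^h(t)$) with the very same exponent pairs as in \eqref{int alpha>0}, \eqref{int alpha<0}, distinguishing $\alpha>0$, $-1<\alpha<0$ and $\alpha<-1$. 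In each case the bracket multiplied by $\beta$ is $\le 0$, hence $\frac{dL^h}{dt}\le 0$; integrating over $[t_1,t]$ gives \eqref{perimeter-preserve, semi}.

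\textbf{Main obstacle.} The only point that is genuinely new, rather than a transcription of the continuous proof, is the discrete identity $\frac{dA^h}{dt}=(\partial_t\boldsymbol{X}^h\cdot\boldsymbol{n}^h,1)_{\Gamma^h}^h$: matching the differentiated shoelace sum with the mass-lumped inner product requires careful bookkeeping of the index shifts (discrete summation by parts) and of the clockwise-rotation sign convention hidden in $\boldsymbol{n}^h_j$. Everything afterwards reduces to choosing the right test functions in \eqref{Model,semi,eq}, the definition \eqref{Model,semi,lag} of $\lambda^h$, and the discrete Cauchy and power mean inequalities, which parallel the variational proof verbatim.
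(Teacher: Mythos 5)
Your proposal is correct and follows essentially the same route as the paper: the identities $\frac{dA^h}{dt}=(\partial_t\boldsymbol{X}^h\cdot\boldsymbol{n}^h,1)_{\Gamma^h}^h$ (which the paper simply imports from proposition 3.1 of \cite{zhao2021energy} rather than rederiving via the shoelace regrouping) and $\frac{dL^h}{dt}=(\partial_s\boldsymbol{X}^h,\partial_s\partial_t\boldsymbol{X}^h)_{\Gamma^h}^h$, the test functions $\phi^h\equiv1$, $\phi^h=\kappa^h$, $\boldsymbol{\omega}^h=\partial_t\boldsymbol{X}^h$, and the discrete Cauchy and power mean inequalities; your regrouping of the lumped sums into nodal weights $\varpi_j=\tfrac12(|\boldsymbol{h}_j|+|\boldsymbol{h}_{j+1}|)$ (valid since $\kappa^h$ is continuous) is only a cosmetic variant of the paper's direct $2N$-point application to the one-sided limits, which for $\alpha=1$ requires the extra arithmetic--quadratic mean step that your regrouping avoids. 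The one slip is the substitution in the weighted Cauchy inequality for $\alpha=1$: with $a_j=b_j=\sqrt{u_j}$ it degenerates to an identity, so take instead $a_j=u_j$, $b_j=1$ (your $\alpha=-1$ choice $a_j=\sqrt{u_j}$, $b_j=1/\sqrt{u_j}$ is fine), a trivial fix that does not affect the argument.
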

\begin{proof}\quad
By applying proposition 3.1 in \cite{zhao2021energy}, we obtain
\begin{equation}
\label{Area-preserve, semi, eq1}
\begin{aligned}
\frac{dA^{h}(t)}{dt}=\frac{1}{2}\sum_{j=1}^{N}(\partial_{t}\boldsymbol{X}_{j}^{h}+\partial_{t}\boldsymbol{X}_{j-1}^{h})\cdot(|\boldsymbol{h}_{j}|{\boldsymbol{n}}_{j}^{h})=(\partial_{t}\boldsymbol{X}^{h}\cdot\boldsymbol{n}^{h},1)_{\Gamma^{h}}^h.
\end{aligned}
\end{equation}
Taking $\phi^{h}\equiv1$ in \eqref{Model,semi,eq}, and employing \eqref{Model,semi,lag} and \eqref{Area-preserve, semi, eq1}, we are able to derive
\begin{equation}
\frac{dA^{h}(t)}{dt}=\left(\beta(\kappa^{h})^{\alpha}-\lambda^h,1\right)_{\Gamma^{h}}^h
=\left(\beta(\kappa^{h})^{\alpha},1\right)_{\Gamma^{h}}^h-\frac{\left(\beta(\kappa^{h})^{\alpha},1\right)_{\Gamma^{h}}^h}{(1,1)_{\Gamma^{h}}^h}(1,1)_{\Gamma^{h}}^h\equiv0,
\end{equation}
which leads to \eqref{Area-preserve, semi} immediately. In other words, $A^{h}(t)$ is conserved.

At the same time, differentiating $L^{h}(t)$ with respect to $t$, we get
\begin{equation}\label{perimeter-different, semi eq1}
\begin{aligned}
\frac{dL^{h}(t)}{dt}&=\frac{d}{dt}\left(\sum_{j=1}^{N}|\boldsymbol{h}_{j}|\right)=\sum_{j=1}^{N}\frac{\boldsymbol{h}_{j}}{|\boldsymbol{h}_{j}|}\cdot\frac{d\boldsymbol{h}_{j}}{dt}
=\sum_{j=1}^{N}|\boldsymbol{h}_{j}|\boldsymbol{\tau}^h_{j}\cdot\frac{1}{|\boldsymbol{h}_{j}|}\frac{d\boldsymbol{h}_{j}}{dt}\\
&=\sum_{j=1}^{N}|\boldsymbol{h}_{j}|(\partial_{s}\boldsymbol{X}^{h})\mid_{I_{j}}\cdot\,(\partial_{s}(\partial_{t}\boldsymbol{X}^{h}))\mid_{I_{j}}
=(\partial_{s}\boldsymbol{X}^{h},\partial_{s}\partial _{t}\boldsymbol{X}^{h})_{\Gamma^{h}}^h.
\end{aligned}
\end{equation}
Taking $\phi^{h}=\kappa^{h}$ and $\boldsymbol{\omega}^{h}=\partial_{t}\boldsymbol{X}^{h}$ in \eqref{Model,semi,eq}, combining \eqref{Model,semi,lag} and \eqref{perimeter-different, semi eq1}, we can deduce that
\begin{equation}\label{perimeter-different semi eq2}
\begin{aligned}
\frac{dL^h(t)}{dt}&=\left(\kappa^{h}{\boldsymbol{n}^{h},\partial _{t}\boldsymbol{X}^{h}}\right)_{\Gamma^{h}}^h=\left(\beta(\kappa^{h})^{\alpha}-\lambda^h,\kappa^{h}\right)_{\Gamma^{h}}^h\\
&=\beta\left[\left((\kappa^{h})^{\alpha},\kappa^{h}\right)_{\Gamma^{h}}^h-\frac{\left((\kappa^{h})^{\alpha},1\right)_{\Gamma^{h}}^h(1,\kappa^{h})_{\Gamma^{h}}^h}{(1,1)_{\Gamma^{h}}^h}\right].
\end{aligned}
\end{equation}

When $|\alpha|=1$, it follows from the Cauchy's inequality and the arithmetic\\-geometric/-harmonic mean inequality that
\begin{equation}
\begin{aligned}\label{dis alpha=1}
        &(\kappa^{h},1)_{\Gamma^{h}}^h(\kappa^{h},1)_{\Gamma^{h}}^h=\left(\frac{1}{2}\sum_{j=1}^{N}|\boldsymbol{h}_{j}|
\left(\kappa^{h}(\rho_j^{-})+\kappa^{h}(\rho_{j-1}^{+})\right)\right)^2\\
        &\leq\left(\sum_{j=1}^{N}|\boldsymbol{h}_{j}|\right)\,\left(\frac{1}{4}\sum_{j=1}^{N}|\boldsymbol{h}_{j}|
\left(\kappa^{h}(\rho_j^{-})+\kappa^{h}(\rho_{j-1}^{+})\right)^2\right)\\
        &\leq \left(\sum_{j=1}^{N}|\boldsymbol{h}_{j}|\right)\,\left(\frac{1}{4}\sum_{j=1}^{N}|\boldsymbol{h}_{j}|2
\left((\kappa^{h}(\rho_j^{-}))^2+(\kappa^{h}(\rho_{j-1}^{+}))^2\right)\right)\\
        &=(1,1)_{\Gamma^{h}}^h(\kappa^{h},\kappa^{h})_{\Gamma^{h}}^h,
\end{aligned}
\end{equation}
and
\begin{equation}
\begin{aligned}\label{dis alpha=-1}
&(1,1)_{\Gamma^{h}}^h((\kappa^{h})^{-1},\kappa^{h})_{\Gamma^{h}}^h=(1,1)_{\Gamma^{h}}^h(1,1)_{\Gamma^{h}}^h=\left(\sum_{j=1}^{N}|\boldsymbol{h}_{j}|\right)^2\\
&\leq \left(\sum_{j=1}^{N}|\boldsymbol{h}_{j}|\frac{\frac{1}{\kappa^h}(\rho_j^-)+\frac{1}{\kappa^h}(\rho_{j-1}^+)}{2}\right)\left(\sum_{j=1}^{N}|\boldsymbol{h}_{j}|\frac{2}{\frac{1}{\kappa^h}(\rho_j^-)+\frac{1}{\kappa^h}(\rho_{j-1}^+)}\right)\\
&\leq \left(\sum_{j=1}^{N}|\boldsymbol{h}_{j}|\frac{\frac{1}{\kappa^h}(\rho_j^-)+\frac{1}{\kappa^h}(\rho_{j-1}^+)}{2}\right)\left(\sum_{j=1}^{N}|\boldsymbol{h}_{j}|\frac{\kappa^h(\rho_j^-)+\kappa^h(\rho_{j-1}^+)}{2}\right)\\
&=\left((\kappa^{h})^{-1},1\right)_{\Gamma^{h}}^h(\kappa^{h},1)_{\Gamma^{h}}^h.
\end{aligned}
\end{equation}

Substituting \eqref{dis alpha=1}--\eqref{dis alpha=-1} into \eqref{perimeter-different semi eq2} for $\alpha=\pm 1$, respectively, and using $\alpha\,\beta<0$, we deduce that
\begin{equation}
\begin{aligned}
\frac{dL^h(t)}{dt}=\beta\left[\left((\kappa^{h})^{\alpha},\kappa^{h}\right)_{\Gamma^{h}}^h-\frac{\left((\kappa^{h})^{\alpha},1\right)_{\Gamma^{h}}^h(1,\kappa^{h})_{\Gamma^{h}}^h}{(1,1)_{\Gamma^{h}}^h}\right]
\leq 0,
\end{aligned}
\end{equation}
which leads to the desired perimeter decrease result \eqref{perimeter-preserve, semi} for $|\alpha|=1$.

When $0<|\alpha|\neq1$, similar to the proof of theorem \ref{Them: Area-perimeter vari}, to show $\frac{dL^h(t)}{dt}\leq0$, we need the discrete version of power mean inequality \cite{hardy1934inequalities}[theorem 16]:
\begin{lemma}[Power mean inequality, discrete]\label{lem: power mean dis}
Suppose $n\in \mathbb{Z}^+$ and the non-zero parameters $p,q$ satisfy $-\infty<p<q<\infty$. Let $x_1, x_2, \ldots,x_n$ be non-negative numbers if $p>0$, or positive numbers if $p<0$. For any $\omega_1, \omega_2,\ldots, \omega_n$ satisfying $0\leq \omega_i< +\infty\,\forall i$ with $\sum\limits_{i=1}^n\omega_i>0$,  we have
\begin{equation}\label{eq: power mean dis}
    \left(\frac{\sum\limits_{i=1}^n \omega_ix_i^p}{\sum\limits_{i=1}^n\omega_i}\right)^{\frac{1}{p}}\leq  \left(\frac{\sum\limits_{i=1}^n \omega_ix_i^q}{\sum\limits_{i=1}^n\omega_i}\right)^{\frac{1}{q}}.
\end{equation}
\end{lemma}

For the case $\alpha>0, \alpha\neq 1$, we employ the discrete power mean inequality for $n=2N, \omega_{2j-1}=\omega_{2j}=\frac{1}{2}|\boldsymbol{h}_j|, x_{2j-1}=(\kappa^h)^\alpha(\rho_j^-), x_{2j}=(\kappa^h)^\alpha(\rho_{j-1}^+),\, \forall\, j=1, 2,\ldots, N,\\ p=1, q=\frac{\alpha+1}{\alpha}$, and $x_{2j-1}=(\kappa^h)(\rho_j^-), x_{2j}=(\kappa^h)(\rho_{j-1}^+),\, \forall\, j=1, 2,\ldots, N,\, p=1, q=\alpha{+1}$, respectively, we obtain
\begin{subequations}
\label{dis alpha>0}
\begin{align}
\label{dis alpha>0 eq1}
    &\qquad  \frac{\sum_{j=1}^{N}\frac{1}{2} |\boldsymbol{h}_j| \left((\kappa^h)^\alpha(\rho_{j}^-)+(\kappa^h)^\alpha(\rho_{j-1}^+)\right)}{L^h(t)}\\
    &\qquad  \leq \left(\frac{\sum_{j=1}^{N}\frac{1}{2} |\boldsymbol{h}_j| \left((\kappa^h)^{\alpha+1}(\rho_{j}^-)+(\kappa^h)^{\alpha+1}(\rho_{j-1}^+)\right)}{L^h(t)}\right)^{\frac{\alpha}{\alpha+1}},\nonumber\\
\label{dis alpha>0 eq2}
    &\qquad  \frac{\sum_{j=1}^{N}\frac{1}{2} |\boldsymbol{h}_j| \left((\kappa^h)(\rho_{j}^-)+(\kappa^h)(\rho_{j-1}^+)\right)}{L^h(t)}\\
    &\qquad  \leq \left(\frac{\sum_{j=1}^{N}\frac{1}{2} |\boldsymbol{h}_j| \left((\kappa^h)^{\alpha+1}(\rho_{j}^-)+(\kappa^h)^{\alpha+1}(\rho_{j-1}^+)\right)}{L^h(t)}\right)^{\frac{1}{\alpha+1}}.\nonumber
\end{align}
\end{subequations}
Combining \eqref{dis alpha>0 eq1} and \eqref{dis alpha>0 eq2}, we have
\begin{equation}\label{dis alpha>0 eq3}
\begin{aligned}
&\frac{1}{L^h(t)}((\kappa^{h})^{\alpha},\kappa^{h})_{\Gamma^{h}}^h
\\&=\left(\frac{\sum_{j=1}^{N}\frac{1}{2} |\boldsymbol{h}_j| \left((\kappa^h)^{\alpha+1}(\rho_{j}^-)+(\kappa^h)^{\alpha+1}(\rho_{j-1}^+)\right)}{L^h(t)}\right)\\
&=\left(\frac{\sum_{j=1}^{N}\frac{1}{2} |\boldsymbol{h}_j| \left((\kappa^h)^{\alpha+1}(\rho_{j}^-)+(\kappa^h)^{\alpha+1}(\rho_{j-1}^+)\right)}{L^h(t)}\right)^{\frac{\alpha}{\alpha+1}}
\\& \qquad \cdot\left(\frac{\sum_{j=1}^{N}\frac{1}{2} |\boldsymbol{h}_j| \left((\kappa^h)^{\alpha+1}(\rho_{j}^-)+(\kappa^h)^{\alpha+1}(\rho_{j-1}^+)\right)}{L^h(t)}\right)^{\frac{1}{\alpha+1}}
\end{aligned}
\end{equation}
\begin{equation}
\begin{aligned}
&\geq \left(\frac{\sum_{j=1}^{N}\frac{1}{2} |\boldsymbol{h}_j| \left((\kappa^h)^\alpha(\rho_{j}^-)+(\kappa^h)^\alpha(\rho_{j-1}^+)\right)}{L^h(t)}\right)
\nonumber\\& \qquad \cdot\left(\frac{\sum_{j=1}^{N}\frac{1}{2} |\boldsymbol{h}_j| \left((\kappa^h)(\rho_{j}^-)+(\kappa^h)(\rho_{j-1}^+)\right)}{L^h(t)}\right)\nonumber\\
&=\frac{1}{L^h(t)}\left((\kappa^{h})^{\alpha},1\right)_{\Gamma^{h}}^h\frac{1}{L^h(t)}(\kappa^{h},1)_{\Gamma^{h}}^h.\nonumber
\end{aligned}
\end{equation}
Similarly, for the case $\alpha<0, \alpha\neq -1$, we employ the discrete power mean inequality for $n=2N, \omega_{2j-1}=\omega_{2j}=\frac{1}{2}|\boldsymbol{h}_j|, x_{2j-1}=(\kappa^h)^\alpha(\rho_j^-), x_{2j}=(\kappa^h)^\alpha(\rho_{j-1}^+),\, \forall\, j=1, 2,\ldots, N,\, p=\frac{\alpha+1}{\alpha}, q=1$, and $x_{2j-1}=(\kappa^h)(\rho_j^-), x_{2j}=(\kappa^h)(\rho_{j-1}^+),\, \forall\, j=1, 2,\ldots, N, p=\alpha{+1}, q=1$, respectively, we obtain
\begin{subequations}
\label{dis alpha<0}
\begin{align}
\label{dis alpha<0 eq1}
    &\qquad \left(\frac{\sum_{j=1}^{N}\frac{1}{2} |\boldsymbol{h}_j| \left((\kappa^h)^{\alpha+1}(\rho_{j}^-)+(\kappa^h)^{\alpha+1}(\rho_{j-1}^+)\right)}{L^h(t)}\right)^{\frac{\alpha}{\alpha+1}}
\\
    &\qquad \leq \frac{\sum_{j=1}^{N}\frac{1}{2} |\boldsymbol{h}_j| \left((\kappa^h)^\alpha(\rho_{j}^-)+(\kappa^h)^\alpha(\rho_{j-1}^+)\right)}{L^h(t)},\nonumber\\
\label{dis alpha<0 eq2}
    &\qquad  \left(\frac{\sum_{j=1}^{N}\frac{1}{2} |\boldsymbol{h}_j| \left((\kappa^h)^{\alpha+1}(\rho_{j}^-)+(\kappa^h)^{\alpha+1}(\rho_{j-1}^+)\right)}{L^h(t)}\right)^{\frac{1}{\alpha+1}}
\\
    &\qquad \leq \frac{\sum_{j=1}^{N}\frac{1}{2} |\boldsymbol{h}_j| \left((\kappa^h)(\rho_{j}^-)+(\kappa^h)(\rho_{j-1}^+)\right)}{L^h(t)}.\nonumber
\end{align}
\end{subequations}
Using \eqref{dis alpha<0 eq1} and \eqref{dis alpha<0 eq2}, by the same argument as \eqref{dis alpha>0 eq3}, we have
\begin{equation}\label{dis alpha<0 eq3}
\frac{1}{L^h(t)}\left((\kappa^{h})^{\alpha},\kappa^{h}\right)_{\Gamma^{h}}^h\leq \frac{1}{L^h(t)}\left((\kappa^{h})^{\alpha},1\right)_{\Gamma^{h}}^h\frac{1}{L^h(t)}(\kappa^{h},1)_{\Gamma^{h}}^h
\end{equation}
Combining \eqref{perimeter-different semi eq2}, \eqref{dis alpha>0 eq3}, \eqref{dis alpha<0 eq3} and $\alpha\,\beta<0$, we obtain
\begin{equation}
\begin{aligned}
\frac{dL^h(t)}{dt}=\beta\left[\left((\kappa^{h})^{\alpha},\kappa^{h}\right)_{\Gamma^{h}}^h-\frac{\left((\kappa^{h})^{\alpha},1\right)_{\Gamma^{h}}^h(1,\kappa^{h})_{\Gamma^{h}}^h}{(1,1)_{\Gamma^{h}}^h}\right]
\leq 0,
\end{aligned}
\end{equation}
which means that the desired perimeter decrease result \eqref{perimeter-preserve, semi} is also valid for $0<|\alpha|\neq 1$. To sum up, the PFEM semi-discrete scheme \eqref{Model,semi,eq} is energy decreasing for all $\alpha$ cases. The proof is completed.
\end{proof}
\color{black}
\section{ A structure-preserving PFEM and its properties}
\setcounter{section}{4} \setcounter{equation}{0}
\sloppy{}
\subsection{ A fully-discrete scheme}
Let $\tau>0$ be the time step size and $t_{m}=m\tau$ be the discrete time levels for $m\geq 0$. Assume that $\Gamma^m:=\boldsymbol{X}^m(\cdot)=\left(x^m(\cdot),y^m(\cdot)\right)^T\in \left[\mathbb{V}^h\right]^2$ and $\kappa^m(\cdot)\in \mathbb{V}^h$ are the numerical approximations of $\Gamma^{h}(t_{m}):=\boldsymbol{X}^h(\cdot,t_{m})$ and $\kappa^h(\cdot,t_m)$, respectively, here $\left(\boldsymbol{X}^h(\cdot,t),\kappa^h(\cdot,t)\right) \in \left[\mathbb{V}^h\right]^2\times \mathbb{V}^h$ is the solution of the semi-discretization \eqref{Model,semi,eq}. Similarly, $\Gamma^m$ is composed by segments $\left\{\boldsymbol{h}_j^m\right\}_{j=1}^N$, which satisfies
\begin{equation}
\underset{1 \leq j \leq N}{\min}|\boldsymbol{h}_j^m|>0\quad \mathrm{with}~~ \boldsymbol{h}_j^m:=\boldsymbol{X}^m(\rho_j)-\boldsymbol{X}^m(\rho_{j-1}),\quad \forall j=1,2,\cdot\cdot\cdot,N,
\end{equation}
and $\kappa^m{(\cdot)}$ satisfies the following assumption:
\begin{equation}\label{assumption on kappa, full}
     \kappa^m(\rho_j)\geq 0~ \forall\, 0<\alpha\neq 1 ~ \text{  and  } ~ \kappa^m(\rho_j)>0 ~\forall\, \alpha<0,\quad \forall j=0,1,\cdot\cdot\cdot,N,~ m\geq 0.
 \end{equation}

And the unit tangential vector $\boldsymbol{\tau}^m$ and the outward unit normal vector $\boldsymbol{n}^m$ of the curve $\Gamma^m$ are constant vectors on each interval $I_j$, which can be computed as
\begin{align}
\boldsymbol{\tau}^m|_{I_j}=\frac{\boldsymbol{h}_j^m}{|\boldsymbol{h}_j^m|}:=\boldsymbol{\tau}_j^m,~~
\boldsymbol{n}^m|_{I_j}=-(\boldsymbol{\tau}_j^m)^\perp=-\frac{(\boldsymbol{h}_j^m)^\perp}{|\boldsymbol{h}_j^m|}:=\boldsymbol{n}_j^m, ~~ \forall j=1,2,\cdot\cdot\cdot,N.
\end{align}

Then a structure-preserving parametric finite element fully discrete scheme (SP-PFEM) based on the backward Euler method in time and a proper approximation of the unit normal vector is constructed as: Given the initial curve $\Gamma^0:=\Gamma^h(0)\in \left[\mathbb{V}^h\right]^2$, find the closed curve $\Gamma^{m+1}:=\boldsymbol{X}^{m+1}(\cdot)=\left(x^{m+1}(\cdot),y^{m+1}(\cdot)\right)^T\in \left[\mathbb{V}^h\right]^2$ and the mean curvature $\kappa^{m+1}(\cdot)\in \mathbb{V}^h$ for $m\geq0$ such that
\begin{align}\label{Model full eq}
\left\{\begin{array}{ll}
\displaystyle \left(\frac{\boldsymbol{X}^{m+1}-\boldsymbol{X}^{m}}{\tau}\cdot\boldsymbol{n}^{m+\frac{1}{2}},\phi^{h}\right)_{\Gamma^{m}}^{h}-\left(\beta(\kappa^{m+1})^{\alpha}-{\lambda}^{m+1},\phi^{h}\right)_{\Gamma^{m}}^{h}=0,~\phi^{h}\in \mathbb{V}^h,\\
\displaystyle \left (\kappa^{m+1}\boldsymbol{n}^{m+\frac{1}{2}},\boldsymbol{\omega}^{h}\right)_{\Gamma^{m}}^{h}-\left(\partial_{s}\boldsymbol{X}^{m+1},\partial_{s}\boldsymbol{\omega}^{h}\right)_{\Gamma^{m}}^{h}=0,~~~~~~~~~~~~~~~~~~~~~~\boldsymbol{\omega}^{h}\in \left[\mathbb{V}^h\right]^2,
\end{array}\right.
\end{align}
where
\begin{align}\label{Model full lag}
\displaystyle{\lambda}^{m+1}=\frac{\left(\beta(\kappa^{m+1})^{\alpha},1\right)_{\Gamma^{m}}^{h}}{(1,1)_{\Gamma^{m}}^{h}},
\end{align}
and $$\boldsymbol{n}^{m+\frac{1}{2}}=-\frac{1}{2}\left(\partial_s\boldsymbol{X}^m(s)+\partial_s\boldsymbol{X}^{m+1}(s)\right)^{\bot}
=-\frac{\left(\partial_\rho\boldsymbol{X}^m(\rho)+\partial_\rho\boldsymbol{X}^{m+1}(\rho)\right)^{\bot}}{2|\partial_\rho\boldsymbol{X}^m(\rho)|}.$$
\subsection{Area conservation and perimeter decrease}

Let $A^m$ be the total enclosed area and
$L^m$ be the perimeter of $\Gamma^m$, which can be written as
\begin{equation}\label{Area-perimeter full}
A^{m}=\frac{1}{2}\sum_{j=1}^{N}(x^{m}_{j}-x^{m}_{j-1})(y^{m}_{j}+y^{m}_{j-1}),\qquad
L^{m}=\sum_{j=1}^{N}|\boldsymbol{h}^{m}_{j}|=(1,1)_{\Gamma^{m}}^{h},\hspace{0.5cm} m\geq0.
\end{equation}
Then we can prove that the fully discrete scheme \eqref{Model full eq} preserves the two geometric structures.
\begin{theorem}[Area conservation and perimeter decrease]\label{Them: Area-perimeter full} \\Let $\left(\boldsymbol{X}^{m+1}(\cdot), \kappa^{m+1}(\cdot)\right)\in \left[\mathbb{V}^h\right]^2\times \mathbb{V}^h$ be a solution of the proposed SP-PFEM \eqref{Model full eq} with the assumption \eqref{assumption on kappa, full} holds on $\kappa^{m+1}(\cdot)$. Then there holds
\begin{align}\label{preserve, full}
A^{m+1}=A^{m}\equiv A^{0},
\qquad \qquad L^{m}\leq L^{m-1}\leq\cdot\cdot\cdot\leq L^{0},
\end{align}
i.e., the area is preserving and the perimeter is decreasing.
\end{theorem}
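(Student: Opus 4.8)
The plan is to treat the two assertions in \eqref{preserve, full} separately, following the pattern of the proofs of Theorems \ref{Them: Area-perimeter vari} and \ref{Them: Area-perimeter semi}, the essential new ingredient being a discrete geometric identity that exploits the averaged normal $\boldsymbol{n}^{m+\frac12}$. Throughout, the assumption \eqref{assumption on kappa, full} guarantees that $(\kappa^{m+1})^\alpha$ and $(\kappa^{m+1})^{\alpha+1}$ are well defined at every node with the correct sign, so that Lemma \ref{lem: power mean dis} is applicable exactly as in the semi-discrete case.

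\emph{Area conservation.} First I would establish the purely algebraic identity
\begin{equation}
A^{m+1}-A^{m}=\left((\boldsymbol{X}^{m+1}-\boldsymbol{X}^{m})\cdot\boldsymbol{n}^{m+\frac12},\,1\right)_{\Gamma^{m}}^{h}.
\end{equation}
To prove it, write $\boldsymbol{X}_j^{m+1}=\boldsymbol{X}_j^m+\boldsymbol{d}_j$, use the cross-product form of the shoelace area $A=-\tfrac12\sum_{j}\boldsymbol{X}_{j-1}\times\boldsymbol{X}_{j}$ together with $\boldsymbol{a}\cdot\boldsymbol{b}^{\perp}=\boldsymbol{a}\times\boldsymbol{b}$, and expand the left-hand side as $-\tfrac12\sum_j(\boldsymbol{X}_{j-1}^m\times\boldsymbol{d}_j+\boldsymbol{d}_{j-1}\times\boldsymbol{X}_j^m+\boldsymbol{d}_{j-1}\times\boldsymbol{d}_j)$. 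On the right-hand side, since $|\boldsymbol{h}_j^m|\,\boldsymbol{n}^{m+\frac12}|_{I_j}=-\tfrac12(\boldsymbol{h}_j^m+\boldsymbol{h}_j^{m+1})^{\perp}$ and $\boldsymbol{h}_j^{m+1}=\boldsymbol{h}_j^m+(\boldsymbol{d}_j-\boldsymbol{d}_{j-1})$, the mass-lumped sum reduces, after telescoping (periodicity) and using the antisymmetry of $\times$, to the same expression; in particular the quadratic-in-$\boldsymbol{d}$ remainder coming from the $(\boldsymbol{d}_j-\boldsymbol{d}_{j-1})$ part of $\boldsymbol{n}^{m+\frac12}$ is precisely $-\tfrac12\sum_j\boldsymbol{d}_{j-1}\times\boldsymbol{d}_j$, which is exactly what matches $A^{m+1}-A^m$. (Alternatively this identity can be quoted from the analogous lemma in \cite{bao2021structure}.) Dividing by $\tau$ and taking $\phi^h\equiv1$ in the first equation of \eqref{Model full eq}, then invoking \eqref{Model full lag}, gives $\tfrac1\tau(A^{m+1}-A^m)=(\beta(\kappa^{m+1})^\alpha-\lambda^{m+1},1)_{\Gamma^m}^h=0$, whence $A^{m+1}=A^m\equiv A^0$.

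\emph{Perimeter decrease.} For each edge, Cauchy--Schwarz and the elementary inequality $(|\boldsymbol{h}_j^{m+1}|-|\boldsymbol{h}_j^m|)^2\geq0$ yield $|\boldsymbol{h}_j^m|\,|\boldsymbol{h}_j^{m+1}|+\boldsymbol{h}_j^m\cdot\boldsymbol{h}_j^{m+1}\leq|\boldsymbol{h}_j^m|^2+|\boldsymbol{h}_j^{m+1}|^2$, which rearranges (dividing by $|\boldsymbol{h}_j^m|>0$) to $|\boldsymbol{h}_j^{m+1}|-|\boldsymbol{h}_j^m|\leq\bigl(\boldsymbol{h}_j^{m+1}-\boldsymbol{h}_j^m\bigr)\cdot\boldsymbol{h}_j^{m+1}/|\boldsymbol{h}_j^m|$. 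Summing over $j$ and recalling $\partial_s\boldsymbol{X}^m|_{I_j}=\boldsymbol{h}_j^m/|\boldsymbol{h}_j^m|$, $\partial_s\boldsymbol{X}^{m+1}|_{I_j}=\boldsymbol{h}_j^{m+1}/|\boldsymbol{h}_j^m|$ (differentiation with respect to the $\Gamma^m$ arc length), one gets $L^{m+1}-L^m\leq(\partial_s\boldsymbol{X}^{m+1},\partial_s(\boldsymbol{X}^{m+1}-\boldsymbol{X}^m))_{\Gamma^m}^h$. Now take $\boldsymbol{\omega}^h=\boldsymbol{X}^{m+1}-\boldsymbol{X}^m$ in the second equation of \eqref{Model full eq} and $\phi^h=\kappa^{m+1}$ in the first, and combine with \eqref{Model full lag} to obtain
\begin{equation}
L^{m+1}-L^{m}\leq\tau\beta\left[\bigl((\kappa^{m+1})^{\alpha},\kappa^{m+1}\bigr)_{\Gamma^{m}}^{h}-\frac{\bigl((\kappa^{m+1})^{\alpha},1\bigr)_{\Gamma^{m}}^{h}(1,\kappa^{m+1})_{\Gamma^{m}}^{h}}{(1,1)_{\Gamma^{m}}^{h}}\right].
\end{equation}
The bracket is exactly the quantity analysed in \eqref{dis alpha=1}--\eqref{dis alpha<0 eq3}: for $|\alpha|=1$ it is controlled by the discrete Cauchy inequality and the arithmetic--geometric/-harmonic mean inequality, and for $0<|\alpha|\neq1$ by Lemma \ref{lem: power mean dis} applied with $n=2N$, weights $\tfrac12|\boldsymbol{h}_j^m|$ and the nodal values of $\kappa^{m+1}$; together with $\alpha\beta<0$ this forces the right-hand side to be $\leq0$. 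Hence $L^{m+1}\leq L^m$, and iterating over $m$ gives $L^m\leq L^{m-1}\leq\cdots\leq L^0$.

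\textbf{Main obstacle.} The only genuinely new point is the swept-area identity of the first step: it is where the specific averaging $\boldsymbol{n}^{m+\frac12}=-\tfrac12(\partial_s\boldsymbol{X}^m+\partial_s\boldsymbol{X}^{m+1})^{\perp}$ is indispensable, since a naive normal $\boldsymbol{n}^m$ would leave an uncancelled $O(|\boldsymbol{d}|^2)$ term and destroy exact area conservation. The remaining estimates are bookkeeping: the edge-length inequality above, and the verification that the bracket in the perimeter estimate is identical in form to \eqref{perimeter-different semi eq2}, so that the conclusion follows verbatim from the argument in the proof of Theorem \ref{Them: Area-perimeter semi}.
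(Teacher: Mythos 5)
Your proposal is correct and follows essentially the same route as the paper: the swept-area identity $A^{m+1}-A^{m}=\bigl((\boldsymbol{X}^{m+1}-\boldsymbol{X}^{m})\cdot\boldsymbol{n}^{m+\frac12},1\bigr)_{\Gamma^{m}}^{h}$ combined with $\phi^h\equiv1$ for conservation, and the choices $\phi^h=\kappa^{m+1}$, $\boldsymbol{\omega}^h=\boldsymbol{X}^{m+1}-\boldsymbol{X}^{m}$ together with the edge-length inequality and the discrete Cauchy/power-mean estimates from the semi-discrete theorem for perimeter decrease. The only cosmetic difference is that you verify the area identity by a direct shoelace computation, whereas the paper obtains it by interpolating $\Gamma^h(\vartheta)$ between $\boldsymbol{X}^m$ and $\boldsymbol{X}^{m+1}$ and citing Theorem 2.1 of \cite{bao2021structure}, an alternative you also mention.
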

\begin{proof}\quad
By using a linear interpolation of $\boldsymbol{X}^{m+1}$ and $\boldsymbol{X}^{m}$, we can introduce the approximate solution $\Gamma^h(\vartheta)=\boldsymbol{X}^h(\rho,\vartheta)$ as
\begin{equation}\label{9127}
\Gamma^h(\vartheta)=(1-\vartheta)\boldsymbol{X}^m(\rho)+\vartheta\boldsymbol{X}^{m+1}(\rho), \qquad 0\leq\rho\leq 1,\quad 0\leq\vartheta\leq 1.
\end{equation}
Let $B(\vartheta)$ be the area enclosed by $\Gamma^h(\vartheta)$.
By applying theorem 2.1 in \cite{bao2021structure} and taking $\phi^{h}\equiv1$ in \eqref{Model full eq}, we obtain
\begin{equation}
\begin{aligned}
B(1)-B(0)&=-\int_0^1[\boldsymbol{X}^{m+1}-\boldsymbol{X}^{m}]\cdot\left[\frac{1}{2}\partial_\rho\boldsymbol{X}^m(\rho)+\frac{1}{2}\partial_\rho\boldsymbol{X}^{m+1}(\rho)\right]^{\bot}d\rho
\\&=\left((\boldsymbol{X}^{m+1}-\boldsymbol{X}^{m})\cdot\boldsymbol{n}^{m+\frac{1}{2}},1\right)_{\Gamma^{m}}^h
\\&={\tau\beta}\left[\left((\kappa^{m+1})^{\alpha},1\right)_{\Gamma^{m}}^{h}
-\frac{\left((\kappa^{m+1})^{\alpha},1\right)_{\Gamma^{m}}^{h}(1,1)_{\Gamma^{m}}^{h}}{(1,1)_{\Gamma^{m}}^{h}}\right]
\equiv 0,
\end{aligned}
\end{equation}
which implies $B(1)=B(0)$, i.e., $A^{m+1}=A^{m}$. Thus the first formula in \eqref{preserve, full} is true.

Now we aim to get the perimeter decrease property.
Taking $\phi^{h}=\kappa^{m+1}$ and $\boldsymbol{\omega}^{h}=\boldsymbol{X}^{m+1}-\boldsymbol{X}^{m}$ in \eqref{Model full eq}, we can arrive at
\begin{equation}
\begin{aligned}
&\left(\partial_{s}\boldsymbol{X}^{{m+1}},\partial_{s}(\boldsymbol{X}^{{m+1}}-\boldsymbol{X}^{{m}})\right)_{\Gamma^{m}}^{h}
=\left(\kappa^{m+1}\boldsymbol{n}^{m+\frac{1}{2}},(\boldsymbol{X}^{m+1}-\boldsymbol{X}^{m})\right)_{\Gamma^{m}}^{h}
\\&=\tau\left(\beta(\kappa^{m+1})^{\alpha}-{\lambda}^{m+1},{\kappa^{m+1}}\right)_{\Gamma^{m}}^{h}\\
&={\tau\beta}\left[\left((\kappa^{m+1})^{\alpha},{\kappa^{m+1}}\right)_{\Gamma^{m}}^{h}
-\frac{\left((\kappa^{m+1})^{\alpha},1\right)_{\Gamma^{m}}^{h}({\kappa^{m+1}},1)_{\Gamma^{m}}^{h}}{(1,1)_{\Gamma^{m}}^{h}}\right].
\end{aligned}
\end{equation}
By using a similar argument as the proof of the perimeter decrease part in theorem \ref{Them: Area-perimeter semi}, we know that
\begin{equation}\label{perimeter full eq1}
\begin{aligned}
&\left(\partial_{s}\boldsymbol{X}^{{m+1}},\partial_{s}(\boldsymbol{X}^{{m+1}}-\boldsymbol{X}^{{m}})\right)_{\Gamma^{m}}^{h}\\
&={\tau\beta}\left[\left((\kappa^{m+1})^{\alpha},{\kappa^{m+1}}\right)_{\Gamma^{m}}^{h}-\frac{\left((\kappa^{m+1})^{\alpha},1\right)_{\Gamma^{m}}^{h}({\kappa^{m+1}},1)_{\Gamma^{m}}^{h}}{(1,1)_{\Gamma^{m}}^{h}}\right]
\leq0.
\end{aligned}
\end{equation}
On the other hand, since
$\boldsymbol{u}\cdot\boldsymbol{v}\leq \frac{1}{2}(|\boldsymbol{u}|^2+|\boldsymbol{v}|^2), \,\forall\, \boldsymbol{u}, \boldsymbol{v}\in \mathbb{R}^2$, there holds
\begin{equation}\label{perimeter full eq2}
\begin{aligned}
&\left(\partial_{s}\boldsymbol{X}^{{m+1}},\partial_{s}(\boldsymbol{X}^{{m+1}}-\boldsymbol{X}^{{m}})\right)_{\Gamma^{m}}^{h}\\
&=\sum_{j=1}^{N}|\boldsymbol{h}_{j}^{m}|\frac{\boldsymbol{h}_{j}^{m+1}}{|\boldsymbol{h}_{j}^{m}|}\cdot\frac{\boldsymbol{h}_{j}^{m+1}-\boldsymbol{h}_{j}^{m}}{|\boldsymbol{h}_{j}^{m}|}\\
&\geq\sum_{j=1}^{N}\frac{1}{2}\left(|\frac{\boldsymbol{h}_{j}^{m+1}}{\boldsymbol{h}_{j}^{m}}|^{2}-1\right)|\boldsymbol{h}_{j}^{m}|
\geq\sum_{j=1}^{N}\left(\frac{|\boldsymbol{h}_{j}^{m+1}|}{|\boldsymbol{h}_{j}^{m}|}-1\right)|\boldsymbol{h}_{j}^{m}|\\
&=\sum_{j=1}^{N}\left(|\boldsymbol{h}_{j}^{m+1}|-|\boldsymbol{h}_{j}^{m}|\right)=L^{m+1}-L^{m}.
\end{aligned}
\end{equation}
Then the desired second result in \eqref{preserve, full} follows from \eqref{perimeter full eq1} and \eqref{perimeter full eq2} immediately. Therefore, the fully-discrete scheme \eqref{Model full eq} is structure-preserving. The proof is completed.
\end{proof}
\subsection{The iterative solver}
To solve the nonlinear system \eqref{Model full eq}, the
Newton's iterative method is adopted to derive $\left(\boldsymbol{X}^{m+1}(\cdot), \kappa^{m+1}(\cdot)\right)\in \left[\mathbb{V}^h\right]^2\times \mathbb{V}^h$.
%By using the first-order Taylor
%expansion of the nonlinear system \eqref{Model full eq}) at the point $(\boldsymbol{X}^{m+1,i},\kappa^{m+1,i})$ and setting
%$\boldsymbol{X}^{m+1,i+1}=\boldsymbol{X}^{m+1,i}+\boldsymbol{X}^{\delta}$
In the $i-$th iteration,
given $\left(\boldsymbol{X}^{m+1,i}(\cdot),\kappa^{m+1,i}(\cdot)\right)\in \left[\mathbb{V}^h\right]^2\times \mathbb{V}^h$, for any $(\boldsymbol{\omega}^{h},\phi^{h})\in \left[\mathbb{V}^h\right]^2\times \mathbb{V}^h$, the Newton direction $\left(\boldsymbol{X}^{\delta}(\cdot),\kappa^{\delta}(\cdot)\right)\in \left[\mathbb{V}^h\right]^2\times \mathbb{V}^h$ can be obtained by
\begin{subequations}
\begin{align}
\label{Newton:eq1}
&\displaystyle\left(\frac{\boldsymbol{X}^{\delta}}{\tau}\cdot\boldsymbol{n}^{m+\frac{1}{2},i},\phi^{h}\right)_{\Gamma^{m}}^{h}
+\left(\frac{\boldsymbol{X}^{m+1,i}-\boldsymbol{X}^{m}}{\tau}\cdot \frac{\left(-\partial_\rho\boldsymbol{X}^{\delta}\right)^{\bot}}{2|\partial_\rho\boldsymbol{X}^m|},\phi^{h}\right)_{\Gamma^{m}}^{h}
\\
&-\alpha\beta\left((\kappa^{m+1,i})^{\alpha-1}\kappa^{\delta},\phi^{h}\right)_{\Gamma^{m}}^{h}
+\displaystyle{\alpha\beta\left(\frac{\left((\kappa^{m+1,i})^{\alpha-1}\kappa^{\delta},1\right)_{\Gamma^{m}}^{h}}{(1,1)_{\Gamma^{m}}^{h}},\phi^{h}\right)_{\Gamma^{m}}^{h}}\nonumber\\
&=\displaystyle-\left(\frac{\boldsymbol{X}^{m+1,i}-\boldsymbol{X}^{m}}{\tau}\cdot\boldsymbol{n}^{m+\frac{1}{2},i},\phi^{h}\right)_{\Gamma^{m}}^{h}
+\beta\left((\kappa^{m+1,i})^{\alpha}-\frac{\left((\kappa^{m+1,i})^{\alpha},1\right)_{\Gamma^{m}}^{h}}{(1,1)_{\Gamma^{m}}^{h}},\phi^{h}\right)_{\Gamma^{m}}^{h},\nonumber\\
\label{Newton:eq2}
&\displaystyle\left(\kappa^{\delta}\boldsymbol{n}^{m+\frac{1}{2},i},\boldsymbol{\omega}^{h}\right)_{\Gamma^{m}}^{h}
+\left(\kappa^{m+1,i}\frac{\left(-\partial_\rho\boldsymbol{X}^{\delta}\right)^{\bot}}{2|\partial_\rho\boldsymbol{X}^m|},\boldsymbol{\omega}^{h}\right)_{\Gamma^{m}}^{h}
-\left(\partial_{s}\boldsymbol{X}^{\delta},\partial_{s}\boldsymbol{\omega}^{h}\right)_{\Gamma^{m}}^{h}\\
&=-\left(\kappa^{m+1,i}\boldsymbol{n}^{m+\frac{1}{2},i},\boldsymbol{\omega}^{h}\right)_{\Gamma^{m}}^{h}
+\left(\partial_{s}\boldsymbol{X}^{m+1,i},\partial_{s}\boldsymbol{\omega}^{h}\right)_{\Gamma^{m}}^{h},\nonumber
\end{align}
\end{subequations}
where
$$\boldsymbol{n}^{m+\frac{1}{2},i}=-\frac{1}{2|\partial_\rho\boldsymbol{X}^m|}\left(\partial_\rho\boldsymbol{X}^{m}+\partial_\rho\boldsymbol{X}^{m+1,i}\right)^{\bot}.$$
For each $m\geq 0$, we set
$\boldsymbol{X}^{m+1,i+1}=\boldsymbol{X}^{m+1,i}+\boldsymbol{X}^{\delta}$, $\kappa^{m+1,i+1}=\kappa^{m+1,i}+\kappa^{\delta}$ and
typically choose the initial guess $\boldsymbol{X}^{m+1,0}=\boldsymbol{X}^{m}, \kappa^{m+1,0}=\kappa^{m}$. Then we repeat the iteration \eqref{Newton:eq1}--\eqref{Newton:eq2} until $$\max_{0\leq j\leq N}\left(|\boldsymbol{X}^{m+1,i+1}(\rho_j)-\boldsymbol{X}^{m+1,i}(\rho_j)|+|\kappa^{m+1,i+1}(\rho_j)-\kappa^{m+1,i}(\rho_j)|\right)\leq tol,$$ where $tol$ is the chosen tolerance.

The Picard iteration method can also be taken as an alternative solver. In the $i$-th iteration, given $\kappa^{m+1,i}(\cdot)\in\mathbb{V}^h$, for any $(\boldsymbol{\omega}^{h},\phi^{h})\in \left[\mathbb{V}^h\right]^2\times \mathbb{V}^h$, we seek $\left(\boldsymbol{X}^{m+1,i+1}(\cdot),\kappa^{m+1,i+1}(\cdot)\right)\in \left[\mathbb{V}^h\right]^2\times \mathbb{V}^h$ such that
%\begin{align}\label{eq: Picard}
%\left\{\begin{array}{ll}
%\displaystyle \left(\frac{\boldsymbol{X}^{m+1,i+1}}{\tau}\cdot\boldsymbol{n}^{m+\frac{1}{2},i},\phi^{h}\right)_{\Gamma^{m}}^{h}
%-\beta\left((\kappa^{m+1,i})^{\alpha-1}\kappa^{m+1,i+1},\phi^{h}\right)_{\Gamma^{m}}^{h}\\[3mm]
%\quad\displaystyle+{\beta\left(\frac{\left((\kappa^{m+1,i})^{\alpha-1}\kappa^{m+1,i+1},1\right)_{\Gamma^{m}}^{h}}{(1,1)_{\Gamma^{m}}^{h}},\phi^{h}\right)_{\Gamma^{m}}^{h}}
%=\displaystyle\left(\frac{\boldsymbol{X}^{m}}{\tau}\cdot\boldsymbol{n}^{m+\frac{1}{2},i},\phi^{h}\right)_{\Gamma^{m}}^{h},\\[3mm]
%\displaystyle\left(\kappa^{m+1,i+1}\boldsymbol{n}^{m+\frac{1}{2},i},\boldsymbol{\omega}^{h}\right)_{\Gamma^{m}}^{h}
%-\left(\partial_{s}\boldsymbol{X}^{m+1,i+1},\partial_{s}\boldsymbol{\omega}^{h}\right)_{\Gamma^{m}}^{h}=0.
%\end{array}\right.
%\end{align}
\begin{subequations}
\begin{align}
\label{Picard:eq1}
&\displaystyle \left(\frac{\boldsymbol{X}^{m+1,i+1}}{\tau}\cdot\boldsymbol{n}^{m+\frac{1}{2},i},\phi^{h}\right)_{\Gamma^{m}}^{h}
-\beta\left((\kappa^{m+1,i})^{\alpha-1}\kappa^{m+1,i+1},\phi^{h}\right)_{\Gamma^{m}}^{h}\\
&\quad\displaystyle+{\beta\left(\frac{\left((\kappa^{m+1,i})^{\alpha-1}\kappa^{m+1,i+1},1\right)_{\Gamma^{m}}^{h}}{(1,1)_{\Gamma^{m}}^{h}},\phi^{h}\right)_{\Gamma^{m}}^{h}}
=\displaystyle\left(\frac{\boldsymbol{X}^{m}}{\tau}\cdot\boldsymbol{n}^{m+\frac{1}{2},i},\phi^{h}\right)_{\Gamma^{m}}^{h},\nonumber\\
\label{Picard:eq2}
&\displaystyle\left(\kappa^{m+1,i+1}\boldsymbol{n}^{m+\frac{1}{2},i},\boldsymbol{\omega}^{h}\right)_{\Gamma^{m}}^{h}
-\left(\partial_{s}\boldsymbol{X}^{m+1,i+1},\partial_{s}\boldsymbol{\omega}^{h}\right)_{\Gamma^{m}}^{h}=0.
\end{align}
\end{subequations}

\section{Numerical results}
\setcounter{section}{5} \setcounter{equation}{0}
\sloppy{}
\par
In this section, numerical experiments are carried out to verify the performance of the proposed SP-PFEM \eqref{Model full eq}. We will investigate the convergent rates and area conservation, examine the perimeter decrease property and mesh quality, and simulate the morphological evolutions of closed curves.

In order to measure the numerical error between $\Gamma^m$ and $\Gamma(t_m)$, we adopt the manifold distance defined in \cite{zhao2021energy} as
$M(\Gamma_1,\Gamma_2)=|(\Omega_1\setminus\Omega_2)\cup(\Omega_2\setminus\Omega_1)|=|\Omega_1|+|\Omega_2|-2|\Omega_1\cap\Omega_2|,$
where $\Gamma_1$ and $\Gamma_2$ denote any two closed curves, $\Omega_1$ and $\Omega_2$ are the regions enclosed by $\Gamma_1$ and $\Gamma_2$, respectively, and $|\Omega|$ denotes the area of $\Omega$. Then the numerical error $e^h(t_m)$ is defined by
$$e^h(t)\mid_{t=t_m}:=M(\Gamma^m,\Gamma(t=t_m)),~~m\geq0.$$

To further verify the properties of the proposed SP-PFEM \eqref{Model full eq}, we introduce the relative area loss function $\frac{\Delta A^h(t)}{A^h(0)}$, the normalized perimeter function $\frac{W^h(t)}{W^h(0)}$ and the mesh ratio $R^h(t)$ as
$$
\frac{\Delta A^h(t)}{A^h(0)}\mid_{t=t_m}:=\frac{ A^m-A^0}{A^0},\qquad \frac{W^h(t)}{W^h(0)}\mid_{t=t_m}:=\frac{L^m}{L^0},\qquad R^h(t)\mid_{t=t_m}:=\frac{h^m_{\max}}{h^m_{\min}},$$\color{black}
where $A^m$ and $L^m$ have been given in \eqref{Area-perimeter full}, $h^m_{\max}=\underset{1 \leq j \leq N}{\max}|\boldsymbol{h}_j^m|$, $h^m_{\min}=\underset{1 \leq j \leq N}{\min}|\boldsymbol{h}_j^m|,$ and $R^h(t)$ can be used to measure the mesh
quality.

In the following numerical simulations, since formally the proposed SP-PFEM \eqref{Model full eq} is first-order accuracy in temporal direction and second-order accuracy in spatial direction, we can take the size parameters as $\tau=\mathcal{{O}}(h^2)$, e.g. $\tau=h^2$, except where noted. In view of the fact that the exact solution $\Gamma(t)$ is usually not available, we replace $\Gamma(t=t_m)$ with the numerical approximation based on very small size parameters $h=2^{-8}$ and $\tau=2^{-16}$. In addition, at each time step, the nonlinear system \eqref{Model full eq} is solved by the Newton's iteration method \eqref{Newton:eq1}--\eqref{Newton:eq2}, where the tolerance is chosen as $10^{-12}$.

\subsection{Convergent rates, area conservation, perimeter decrease and mesh quality}
\par
In this subsection, the initial shape is taken as an ellipse curve given by $\frac{x^2}{{3^2}}+y^2=1$ and the parameter $\beta$ in the area-conserved generalized mean curvature flow \eqref{Model,cont,eq}--\eqref{Model,cont,lag} is chosen as $|\beta|=1$ satisfying $\alpha\beta<0$.

Figure \ref{fig:error} plots the spatial convergence rates of the proposed SP-PFEM \eqref{Model full eq} at different times $t$ for different parameters $\alpha$ and $\beta$. We find the order of convergence can reach about $2$ in spatial discretization.
\begin{figure}[!ht]
\centering
\includegraphics[width=.32\textwidth]{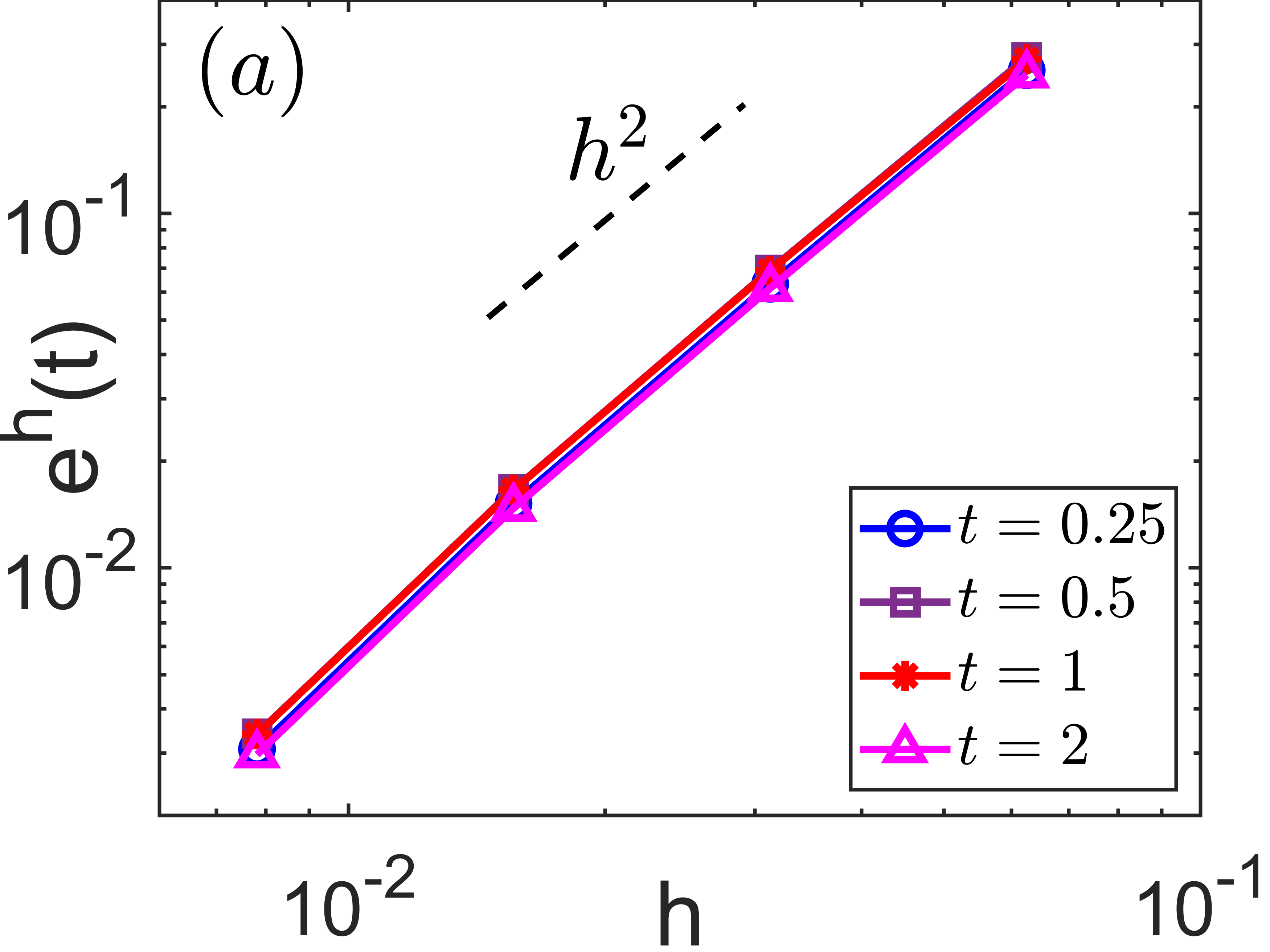}
\includegraphics[width=.32\textwidth]{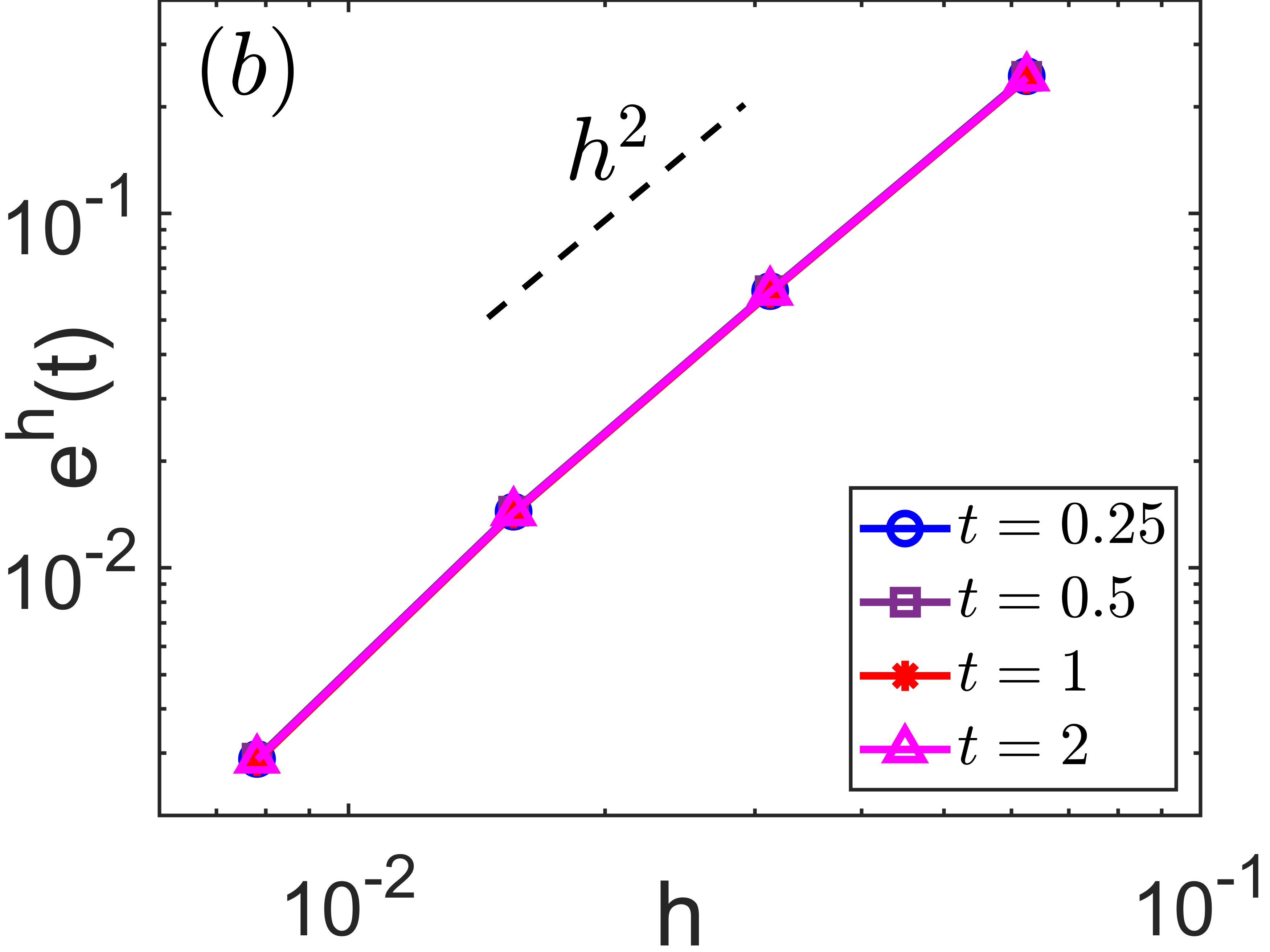}
\includegraphics[width=.32\textwidth]{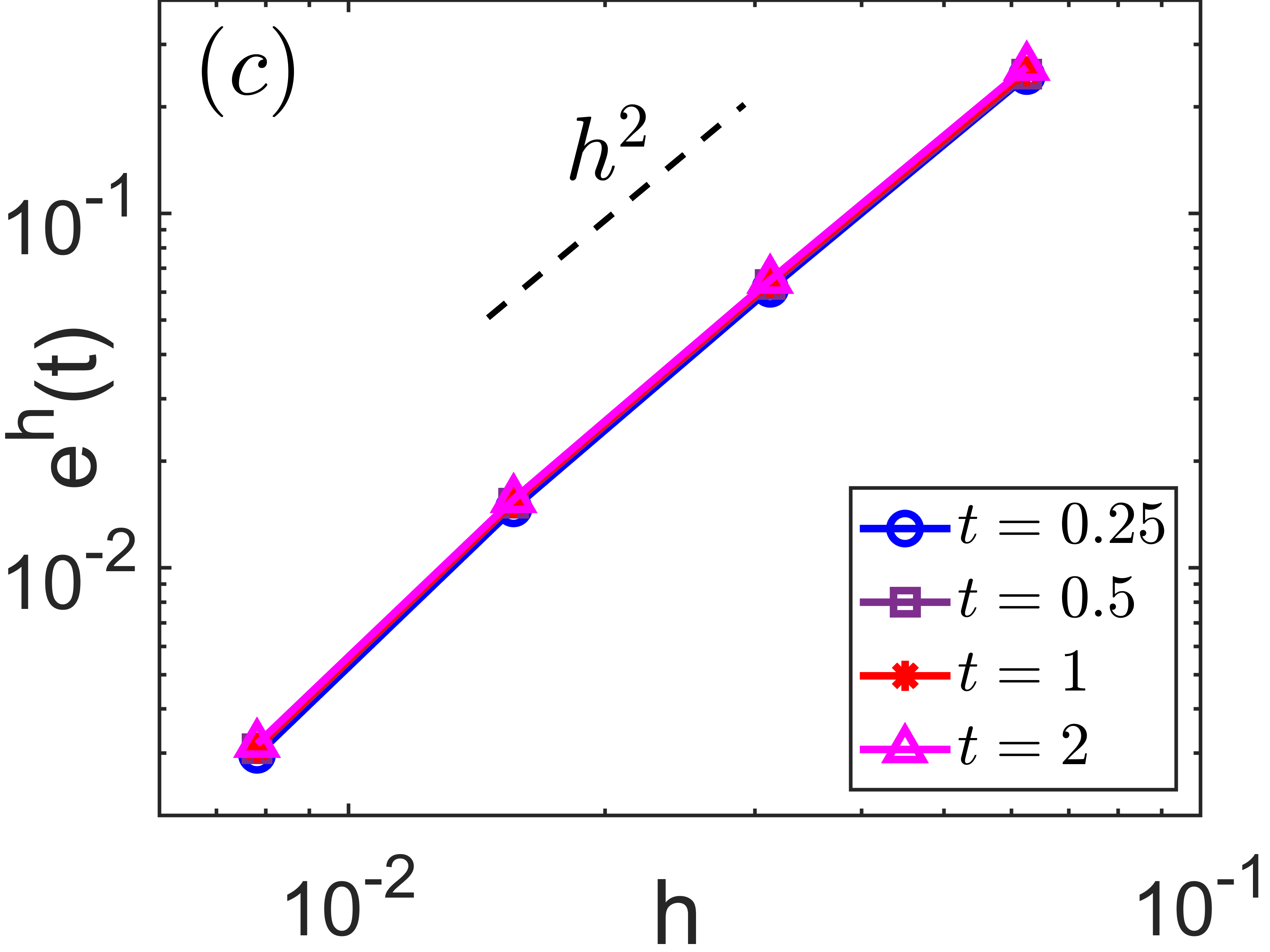}\\
 \vspace{0.5cm}
\includegraphics[width=.32\textwidth]{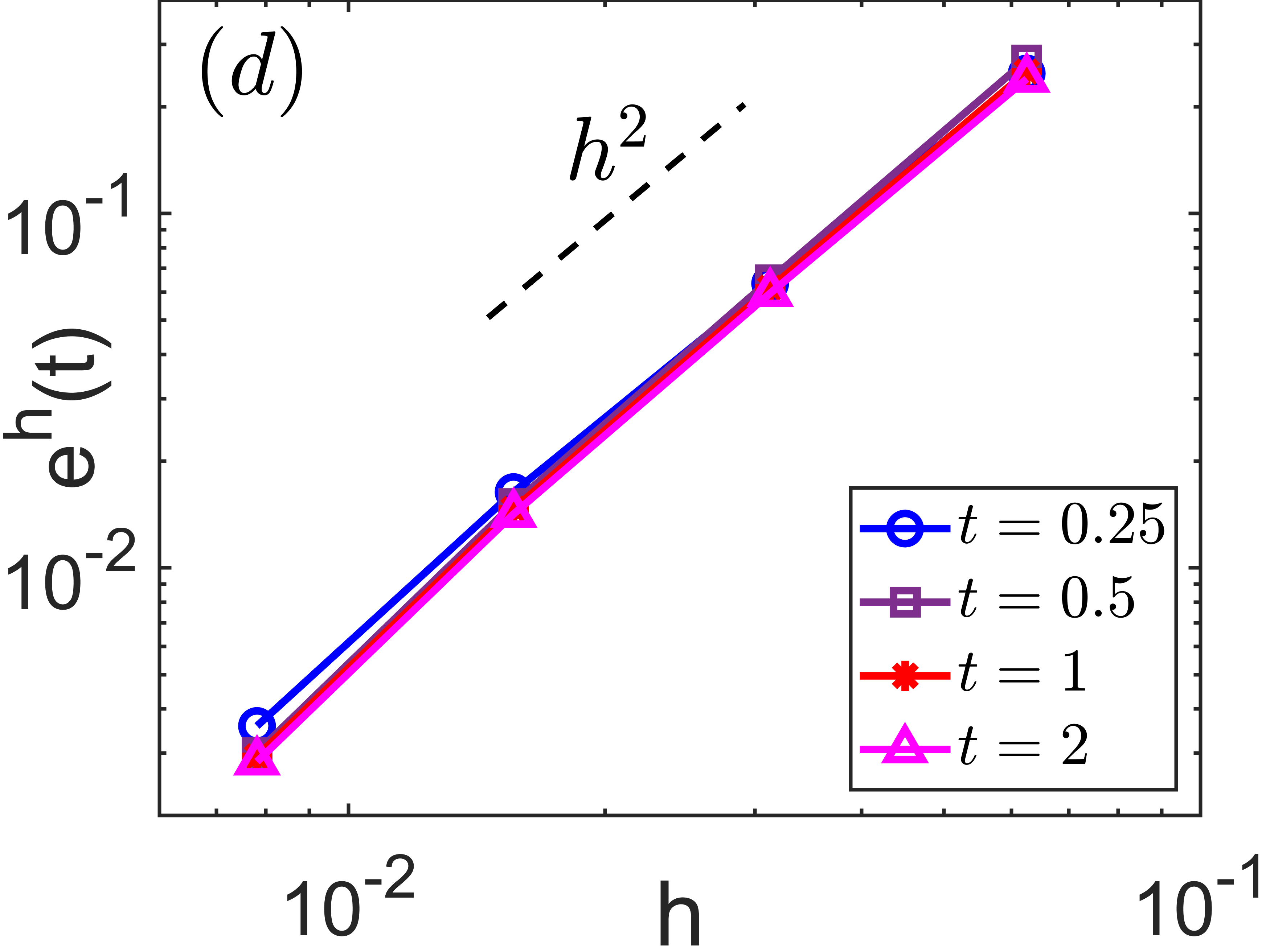}
\includegraphics[width=.32\textwidth]{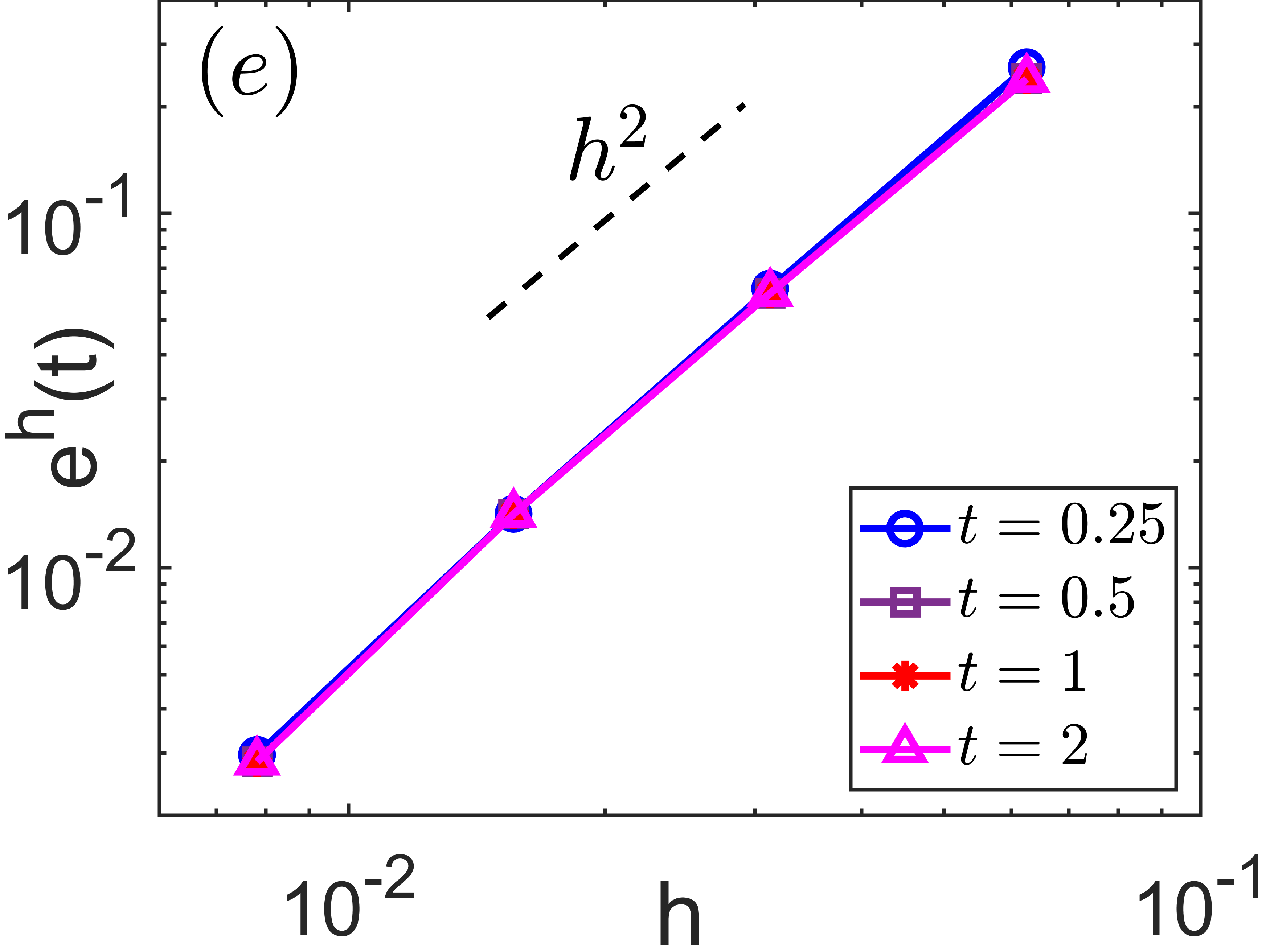}
\includegraphics[width=.32\textwidth]{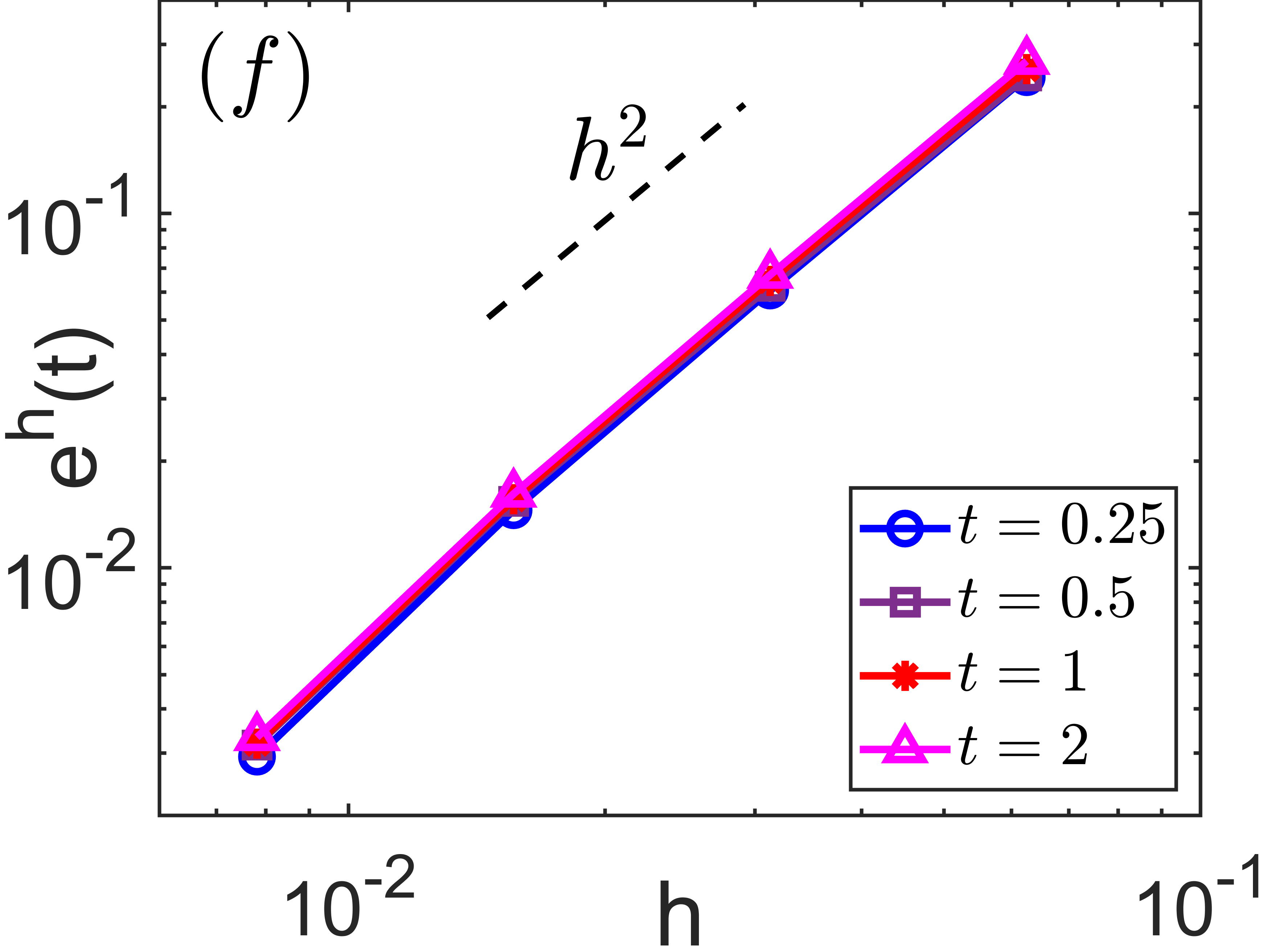}
\caption{ {Spatial convergence rates of the SP-PFEM \eqref{Model full eq} with $\tau=h^2$ at different times $t$ for different parameters $\alpha$ and $\beta$: (a) $\alpha=1$, $\beta=-1$; (b) $\alpha=2$, $\beta=-1$; (c) $\alpha=1/3$, $\beta=-1$; (d) $\alpha=-1$, $\beta=1$; (e) $\alpha=-2$, $\beta=1$; (f) $\alpha=-1/3$, $\beta=1$.}}
\label{fig:error}
\end{figure}

Figure \ref{fig:Ah} shows the time evolutions of the relative area loss $\frac{\Delta A^h(t)}{A^h(0)}$
for different parameters $\alpha$ and $\beta$ with fixed $h=2^{-5}$ and $\tau=\frac{2}{25}h^2$, and depicts the iteration numbers of Newton's iterative \eqref{Newton:eq1}--\eqref{Newton:eq2} simultaneously. Numerical results confirm the area conservation property of the proposed SP-PFEM \eqref{Model full eq} in theorem \ref{Them: Area-perimeter full} for the order of magnitude of the relative area loss is at around $10^{-15}$, which is close to the machine epsilon at around $10^{-16}$. In addition, we observe that the number of Newton's iteration at each time step is around 1 to 3, which implies that it is efficient.

\begin{figure}[!htb]
\centering
\includegraphics[width=.32\textwidth]{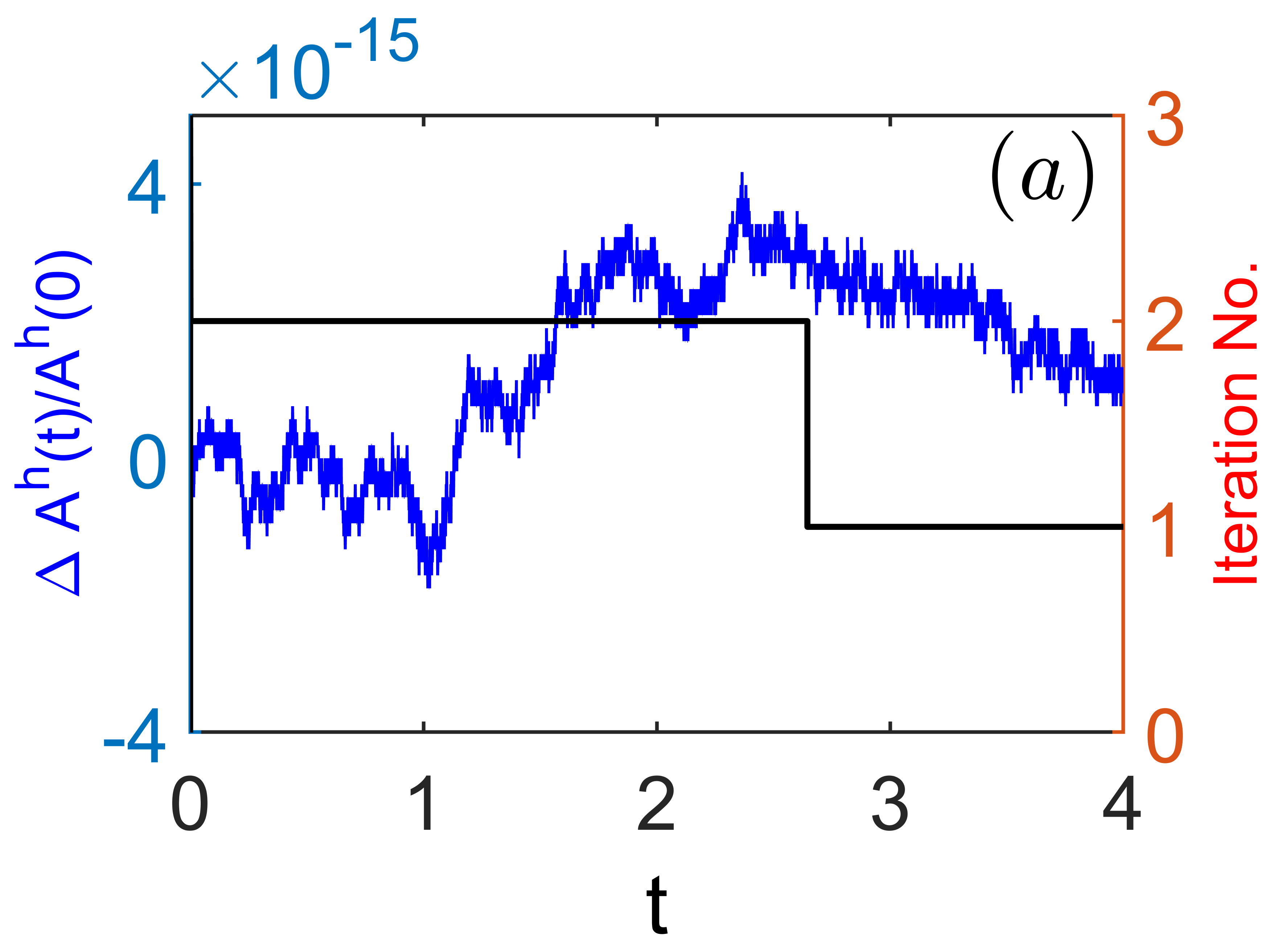}
\includegraphics[width=.32\textwidth]{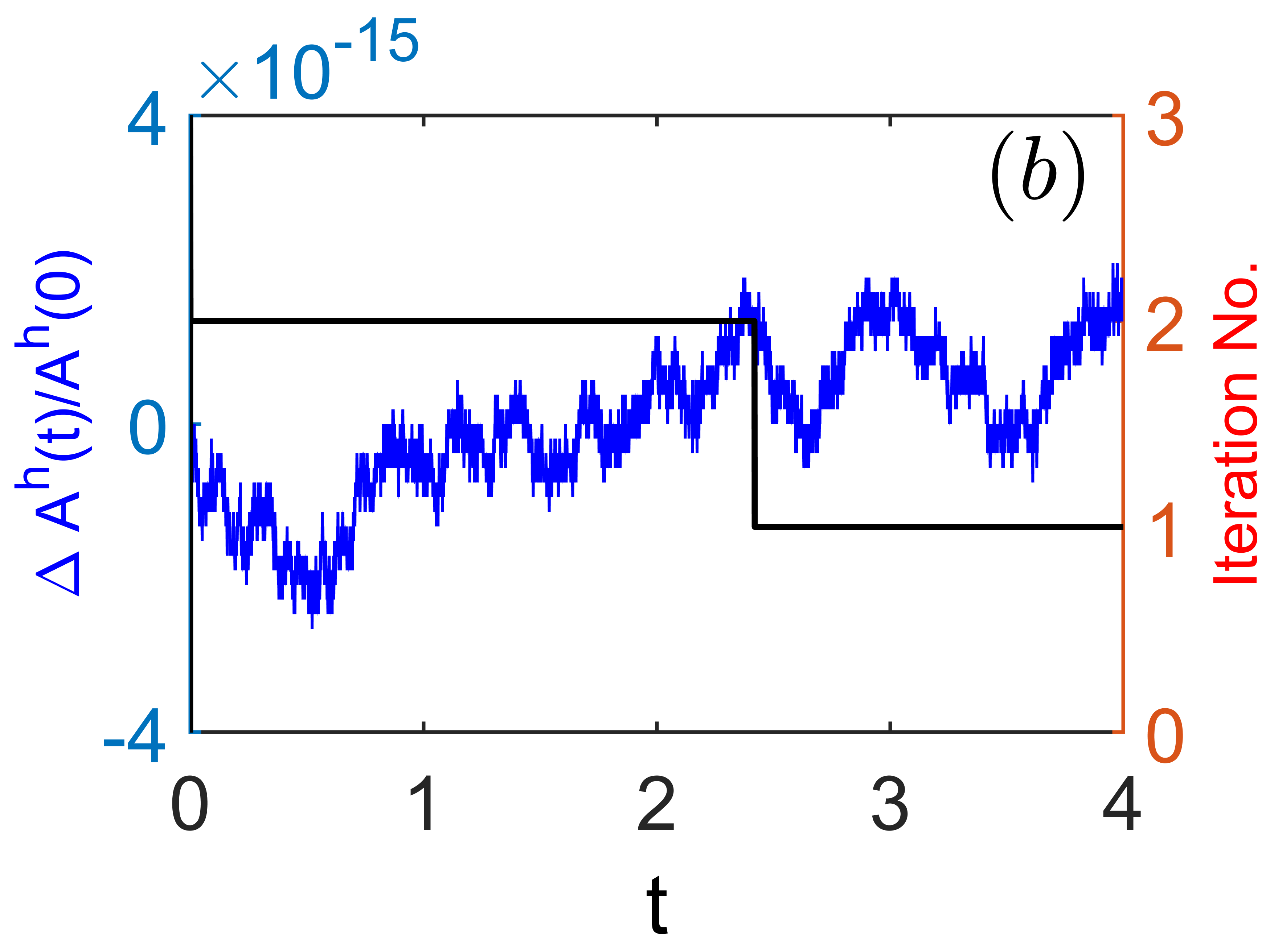}
\includegraphics[width=.32\textwidth]{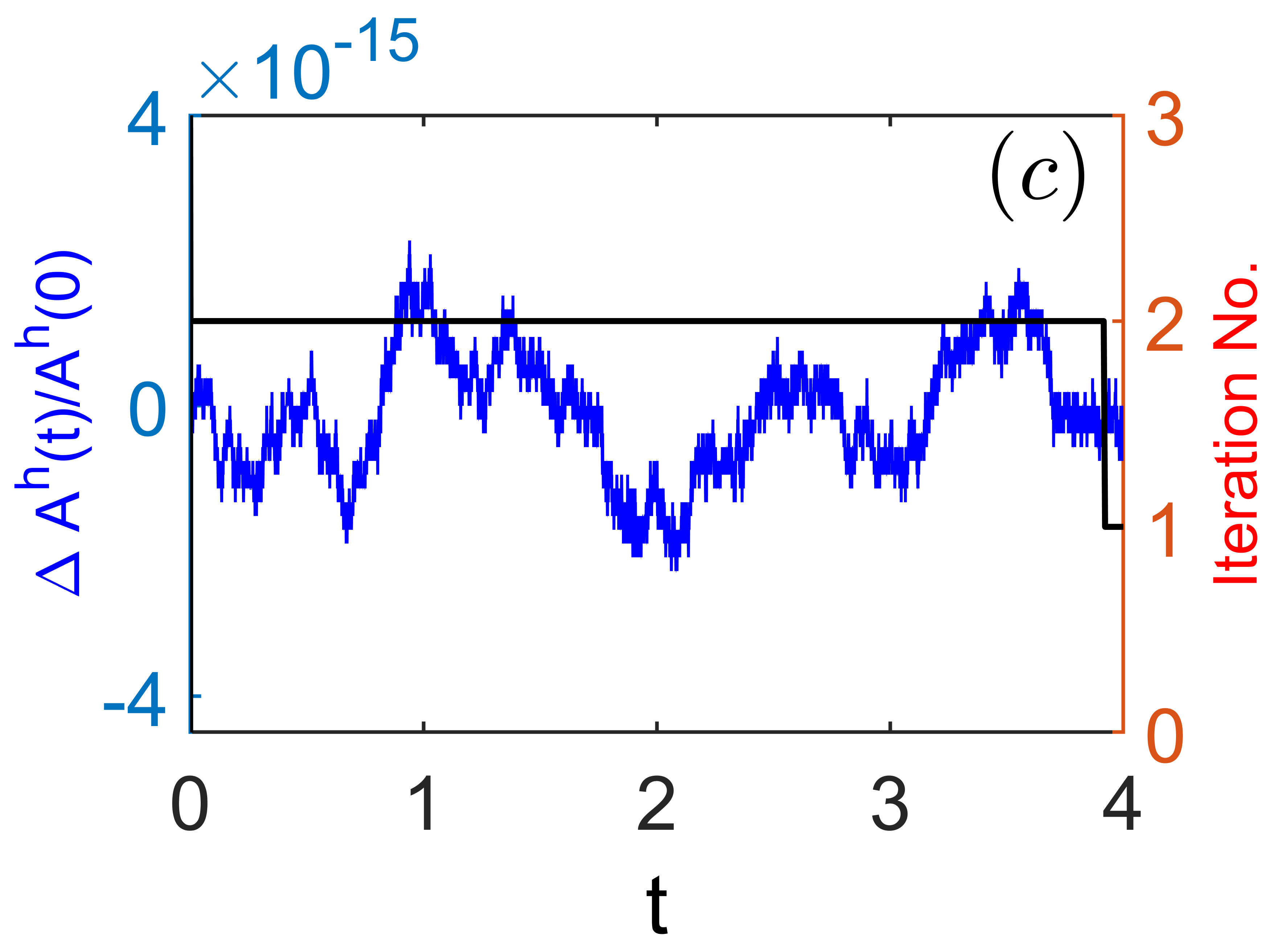}\\
 \vspace{0.1cm}
\includegraphics[width=.32\textwidth]{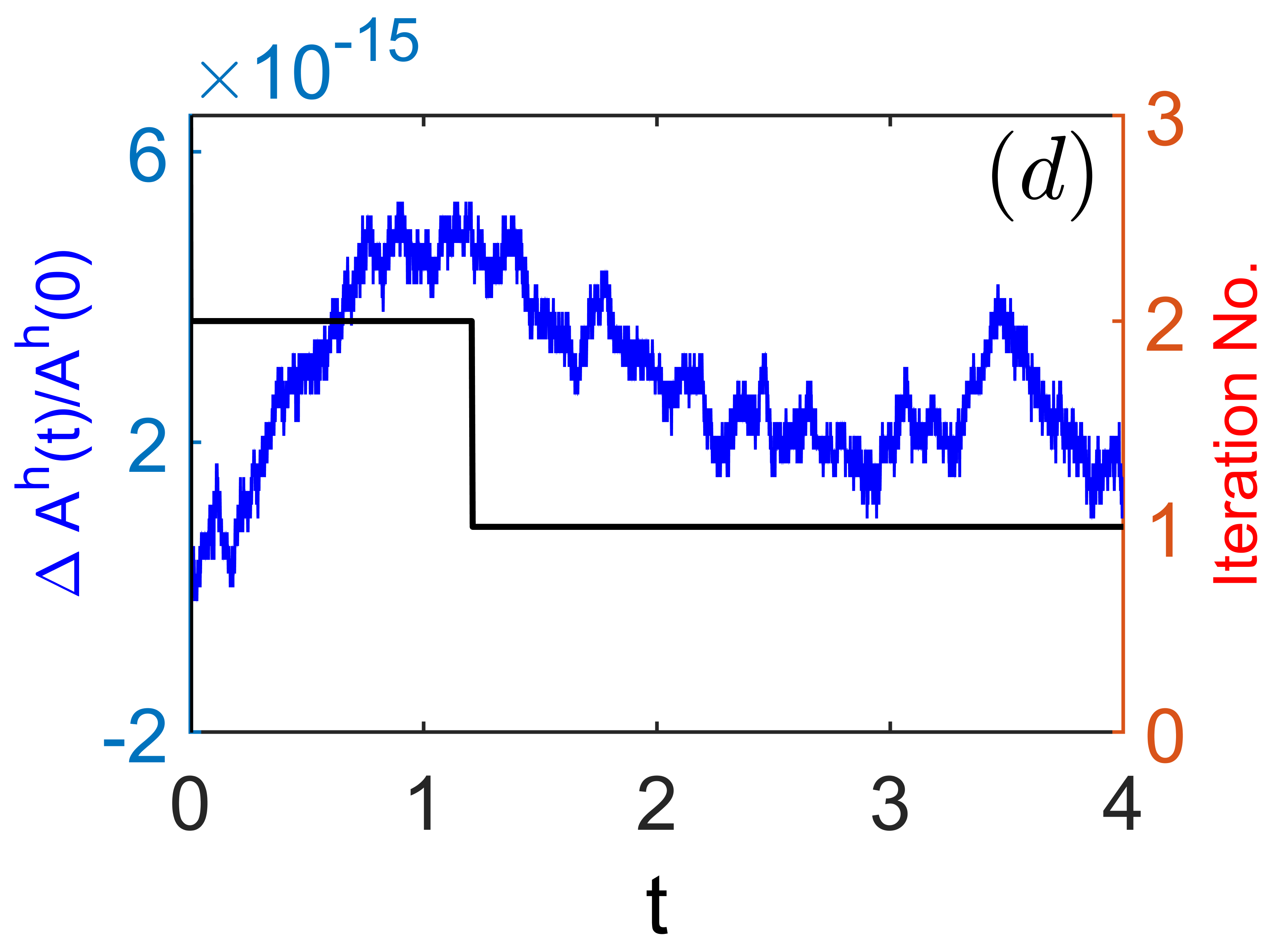}
\includegraphics[width=.32\textwidth]{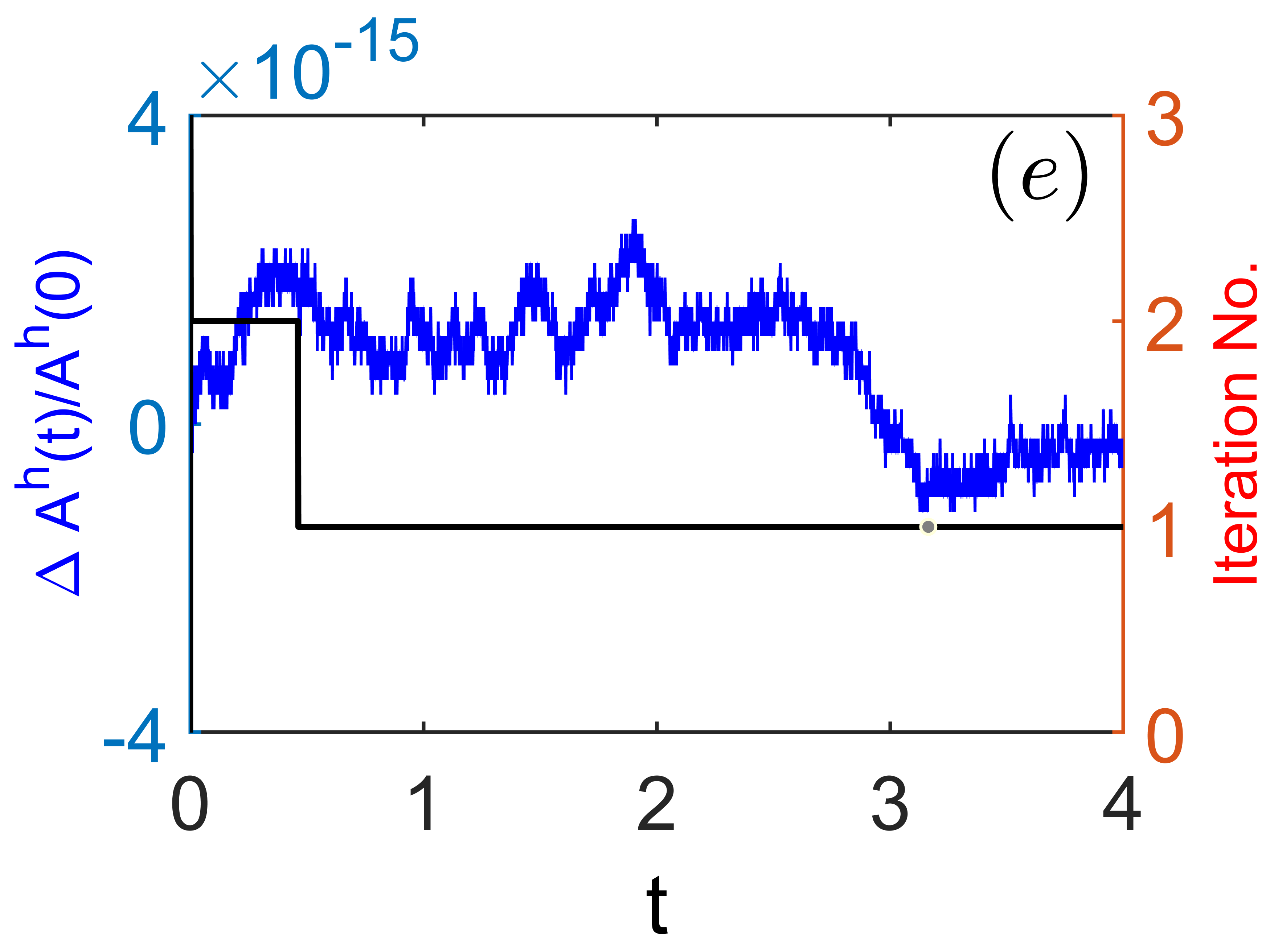}
\includegraphics[width=.32\textwidth]{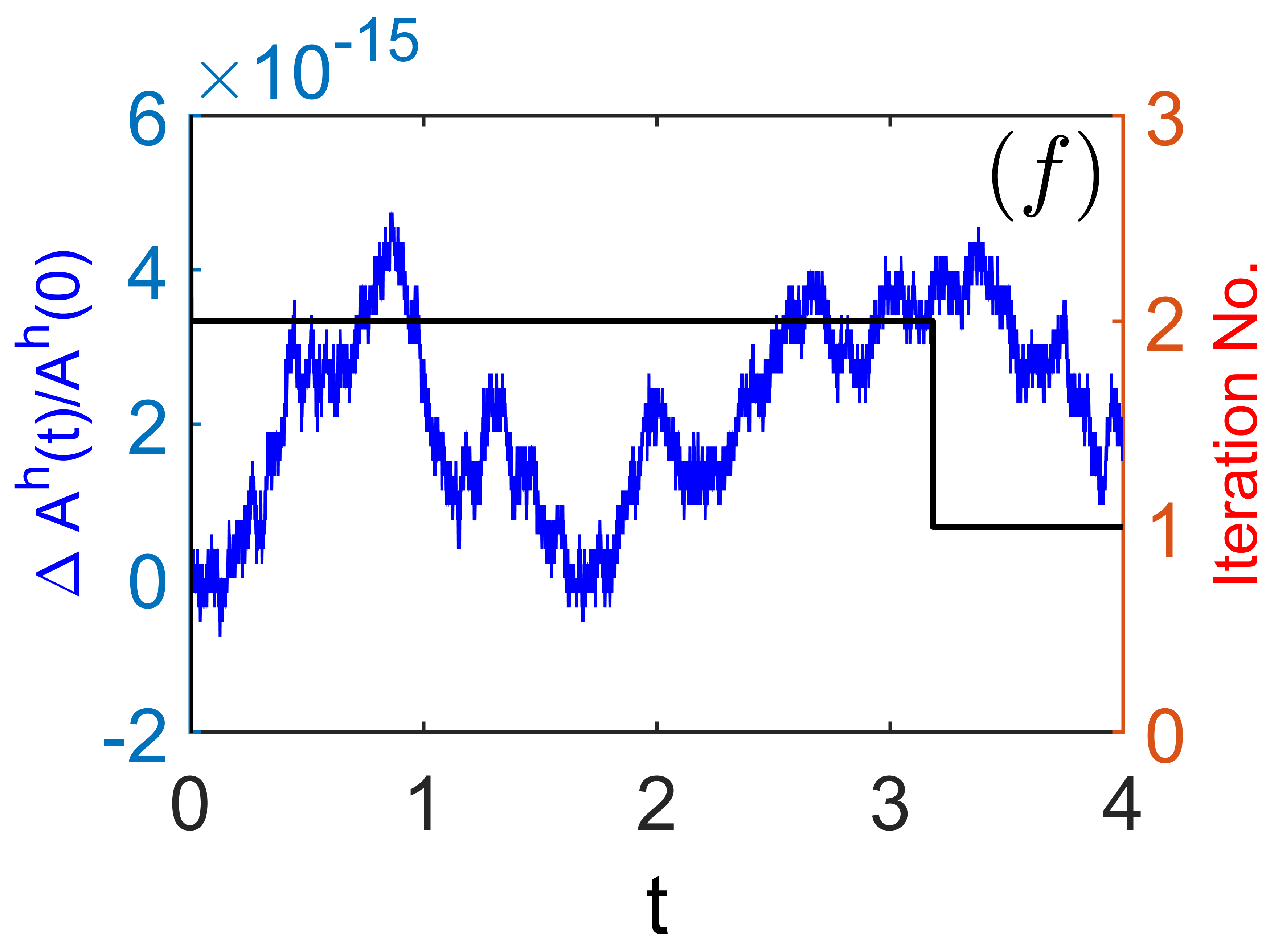}
\caption{{ Time evolutions of the normalized area loss $\frac{\Delta A^h(t)}{A^h(0)}$ (blue line) and iteration number (black line) of the SP-PFEM \eqref{Model full eq} for fixed $h=2^{-5}$ and $\tau=\frac{2}{25}h^2$, where we choose the following parameters $\alpha$ and $\beta$: (a) $\alpha=1$, $\beta=-1$; (b) $\alpha=2$, $\beta=-1$; (c) $\alpha=1/3$, $\beta=-1$; (d) $\alpha=-1$, $\beta=1$; (e) $\alpha=-2$, $\beta=1$; (f) $\alpha=-1/3$, $\beta=1$.}}
\label{fig:Ah}
\end{figure}

To examine the unconditionally perimeter decrease, we show the normalized perimeter $\frac {W^h(t)}{W^h(0)}$ with fixed $h=2^{-4}$ for different time step $\tau$ in Figure \ref{fig:Wh}. It can be seen from Figure \ref{fig:Wh} that the normalized perimeter indeed decreases in time under different $\tau$, which numerically substantiates the theoretical analysis in theorem \ref{Them: Area-perimeter full}.

\begin{figure}[!htb]
\centering
\includegraphics[width=.32\textwidth]{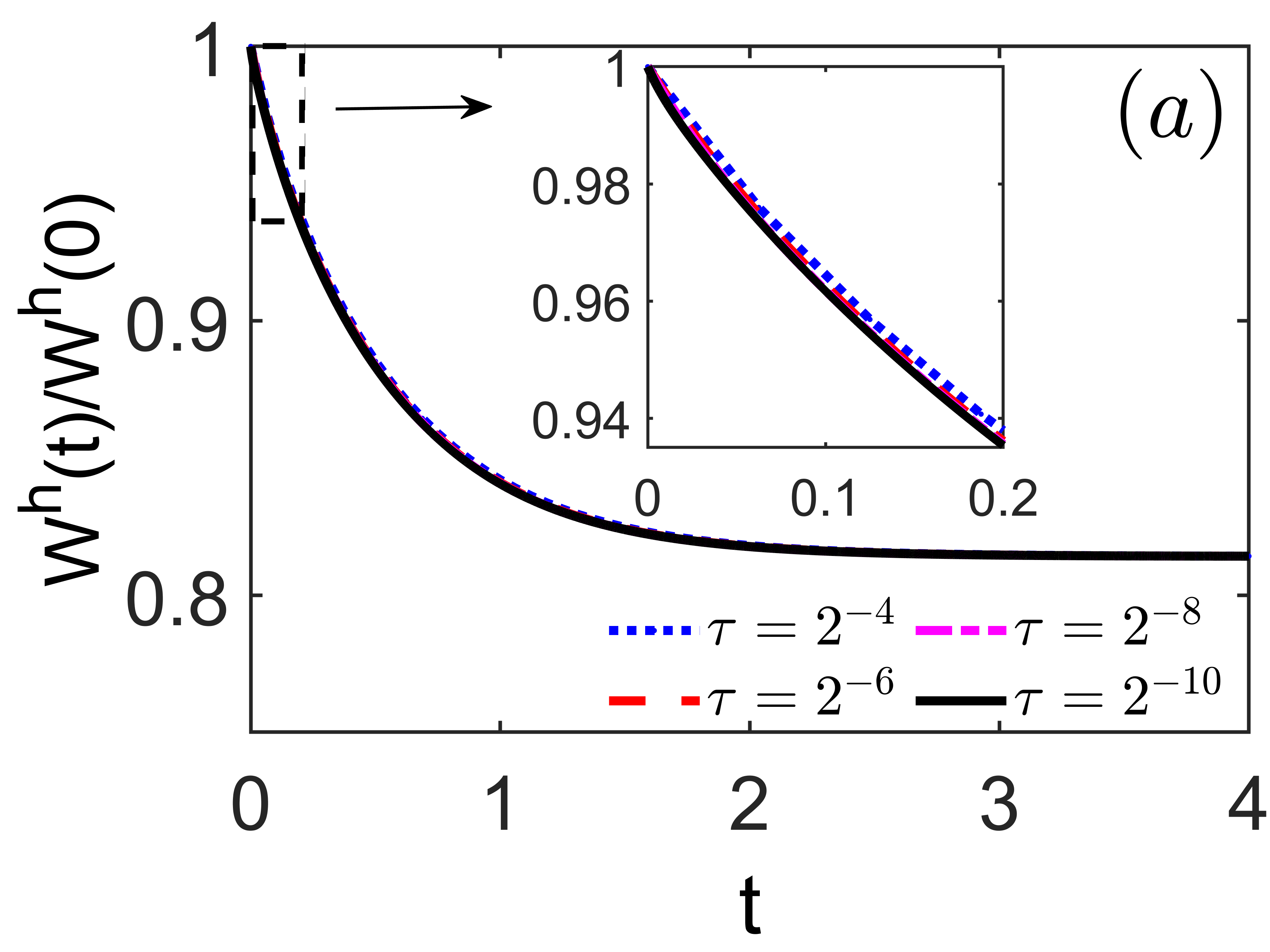}
\includegraphics[width=.32\textwidth]{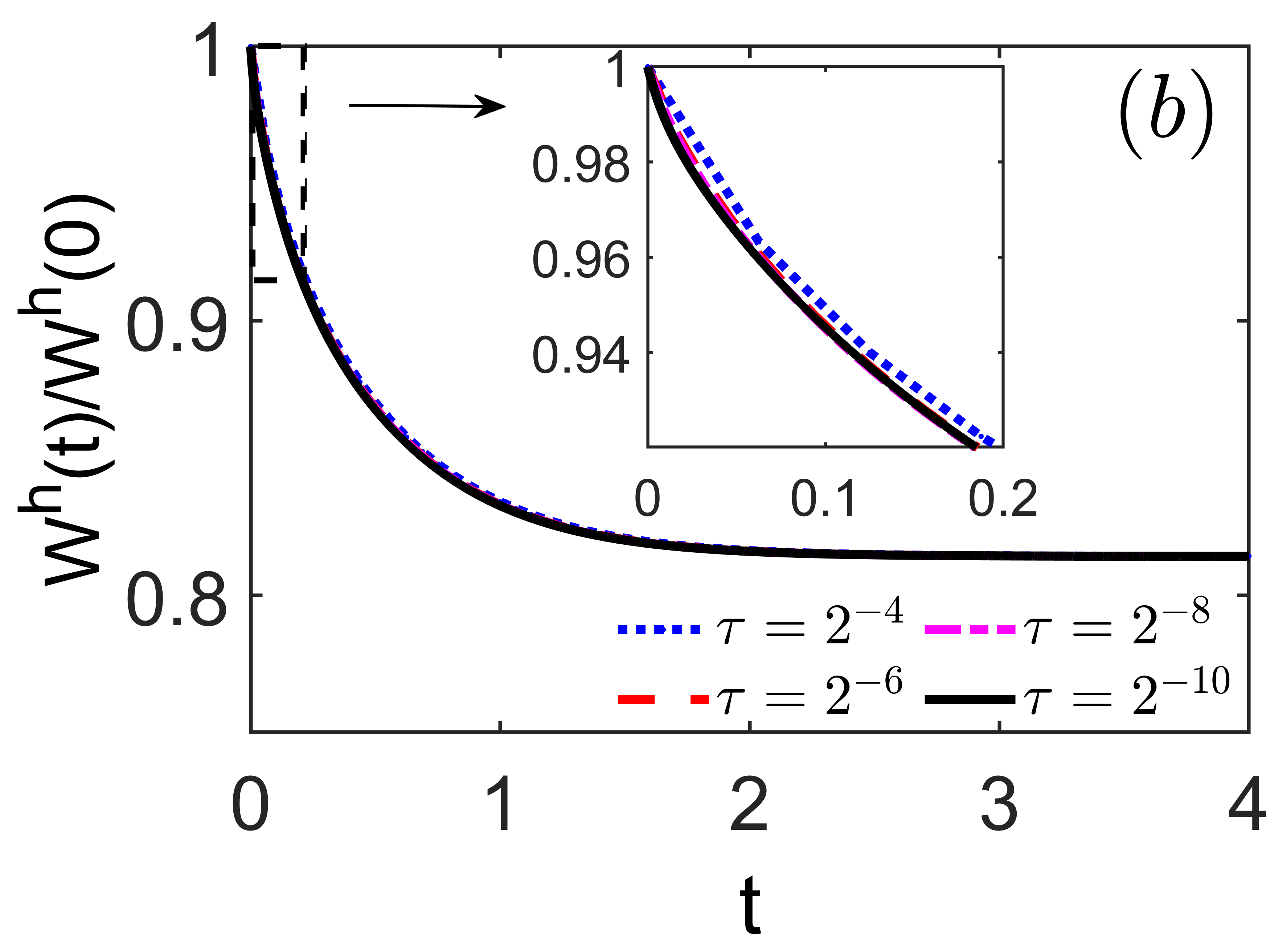}
\includegraphics[width=.32\textwidth]{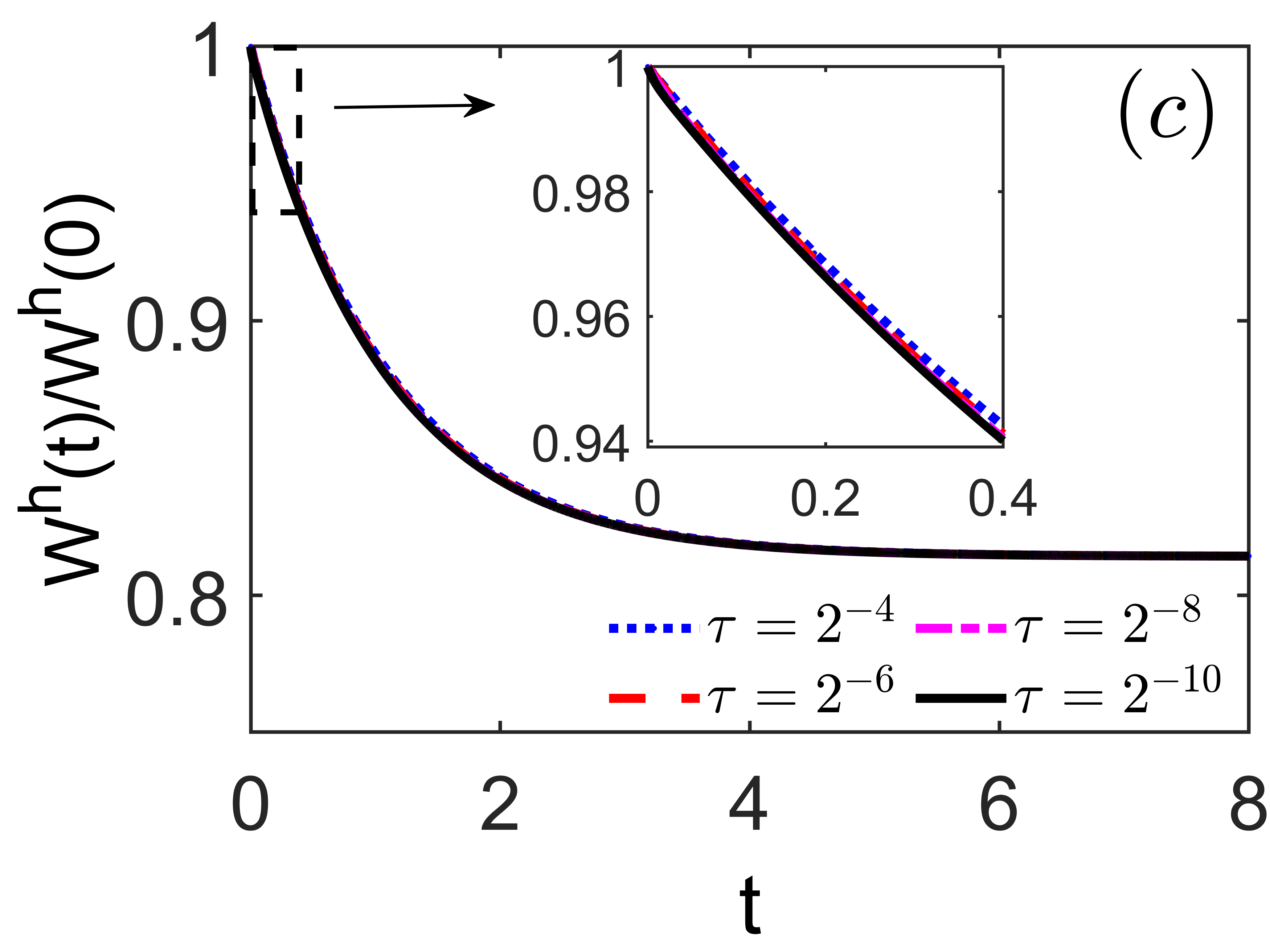}\\
 \vspace{0.1cm}
\includegraphics[width=.32\textwidth]{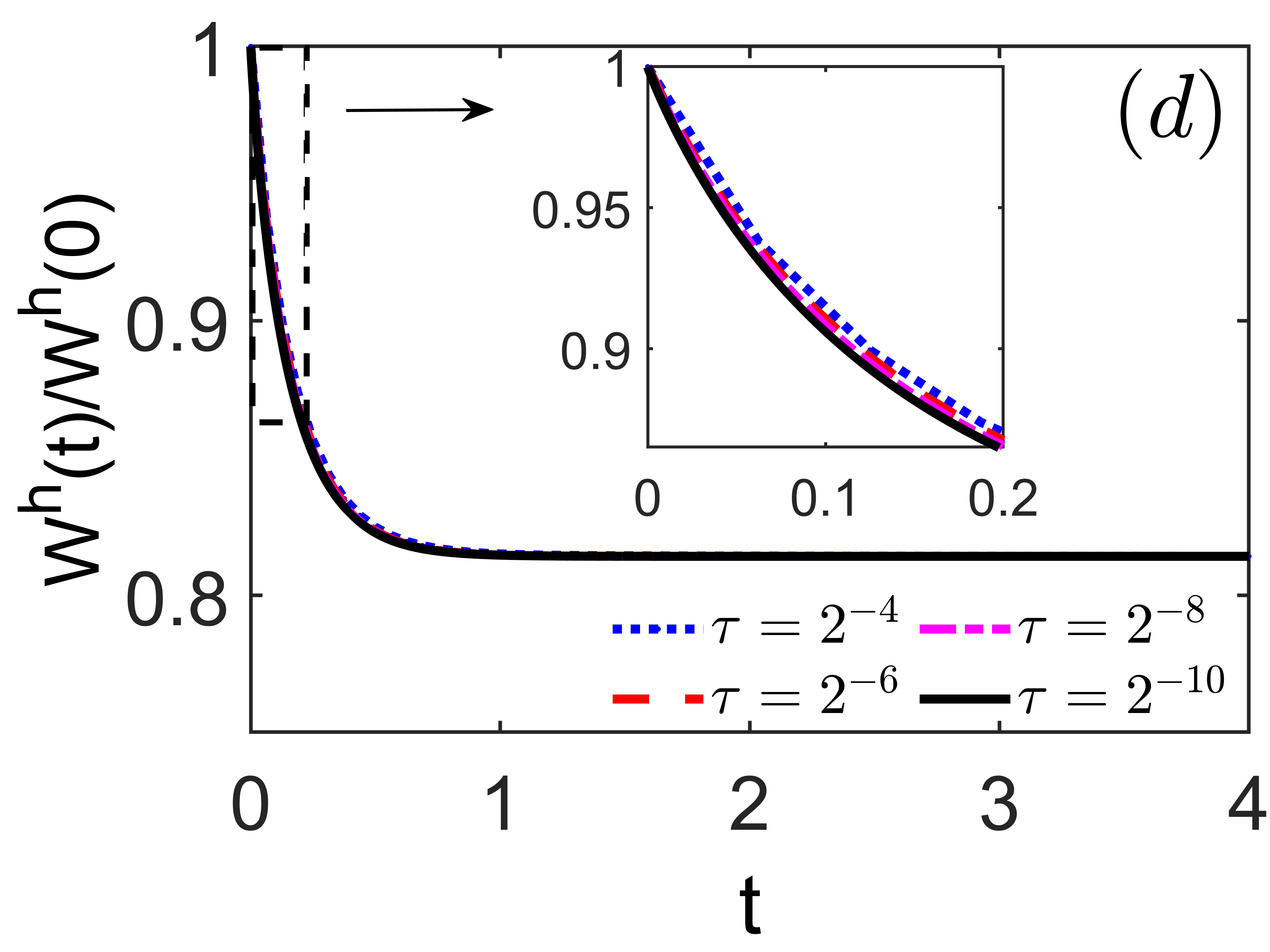}
\includegraphics[width=.32\textwidth]{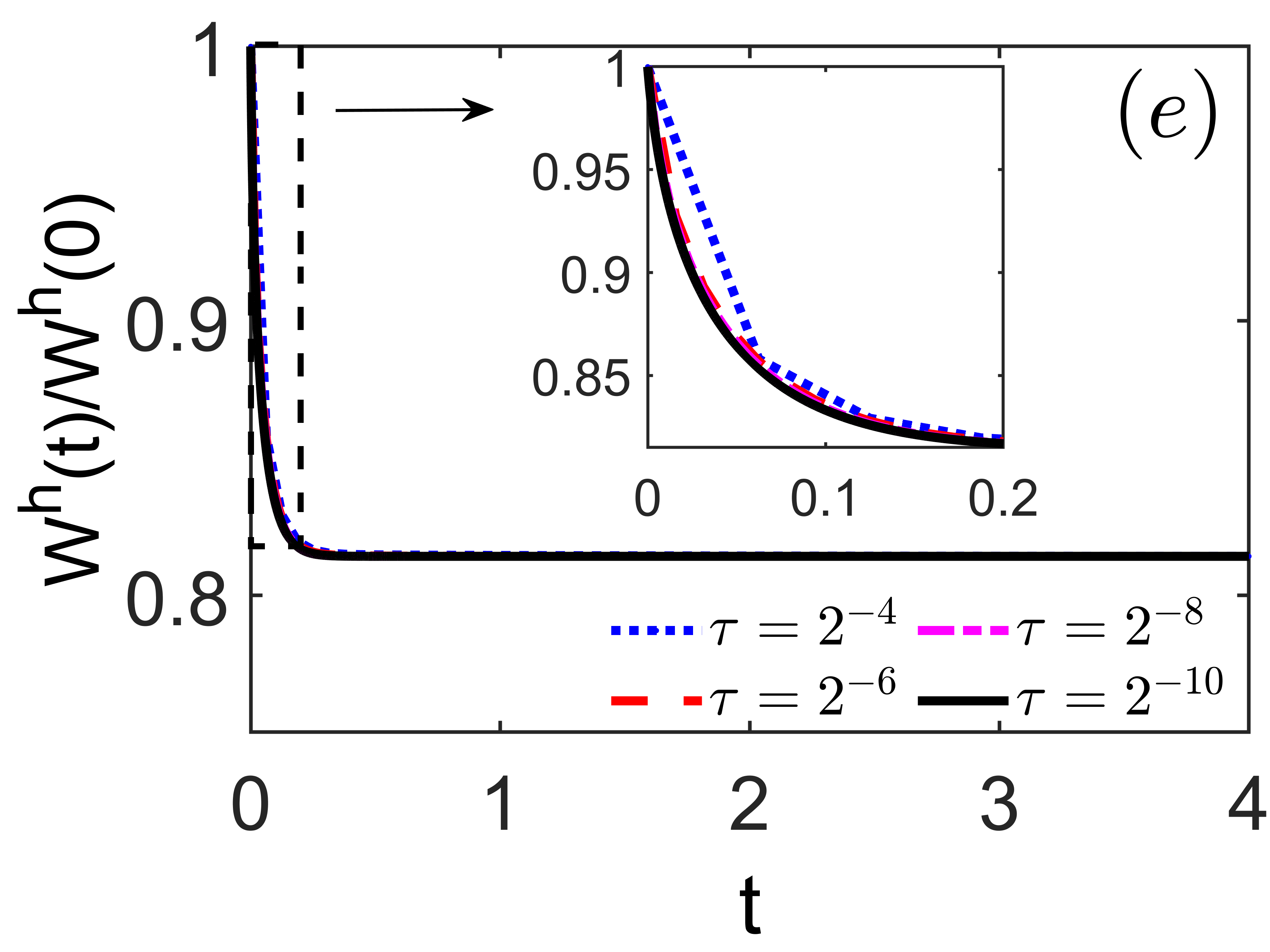}
\includegraphics[width=.32\textwidth]{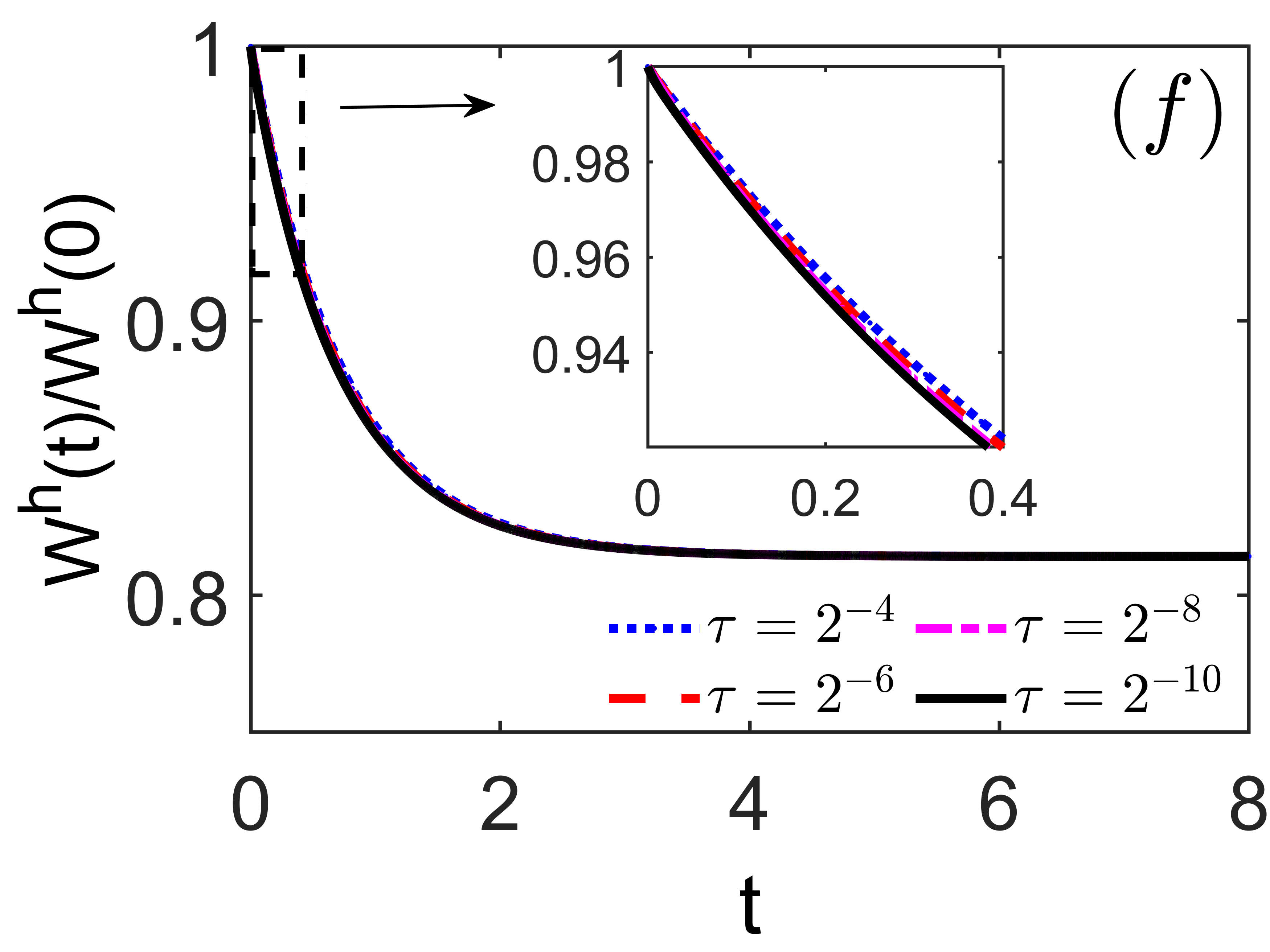}
\caption{{Time evolutions of the normalized perimeter $\frac{ W^h(t)}{W^h(0)}$ of the SP-PFEM \eqref{Model full eq} with fixed $h=2^{-4}$ for different $\tau$, where we choose the following parameters $\alpha$ and $\beta$: (a) $\alpha=1$, $\beta=-1$; (b) $\alpha=2$, $\beta=-1$; (c) $\alpha=1/3$, $\beta=-1$; (d) $\alpha=-1$, $\beta=1$; (e) $\alpha=-2$, $\beta=1$; (f) $\alpha=-1/3$, $\beta=1$.}}
\label{fig:Wh}
\end{figure}

Figure \ref{fig:Rh} depicts the time evolutions of the mesh ratio indicator $R^h(t)$ for different $h$. By using a uniform partition of the polar angle, the initial curve is discretized into a polygonal curve with non-uniform distribution with respect to the arc length. As time evolves, we can see the mesh ratio indicator $R^h(t)$ gradually decreases to approximate 1, which implies that the mesh points on the polygonal curve eventually be equally distributed, i.e., asymptotic equal mesh distribution. This property can follow from proposition 2.1 in \cite{bao2021structure}.

\begin{figure}[!htp]
\centering
\includegraphics[width=.32\textwidth]{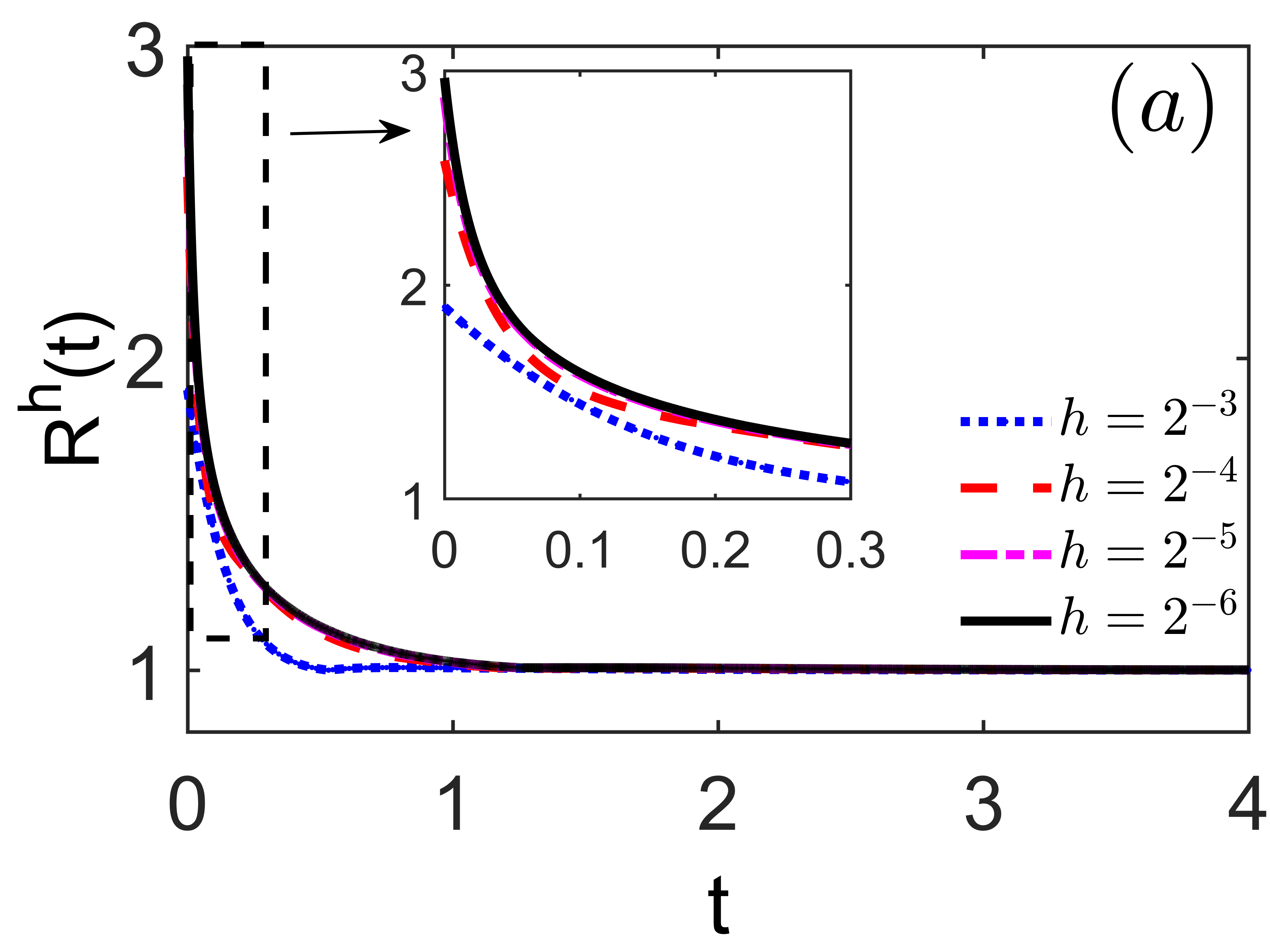}
\includegraphics[width=.32\textwidth]{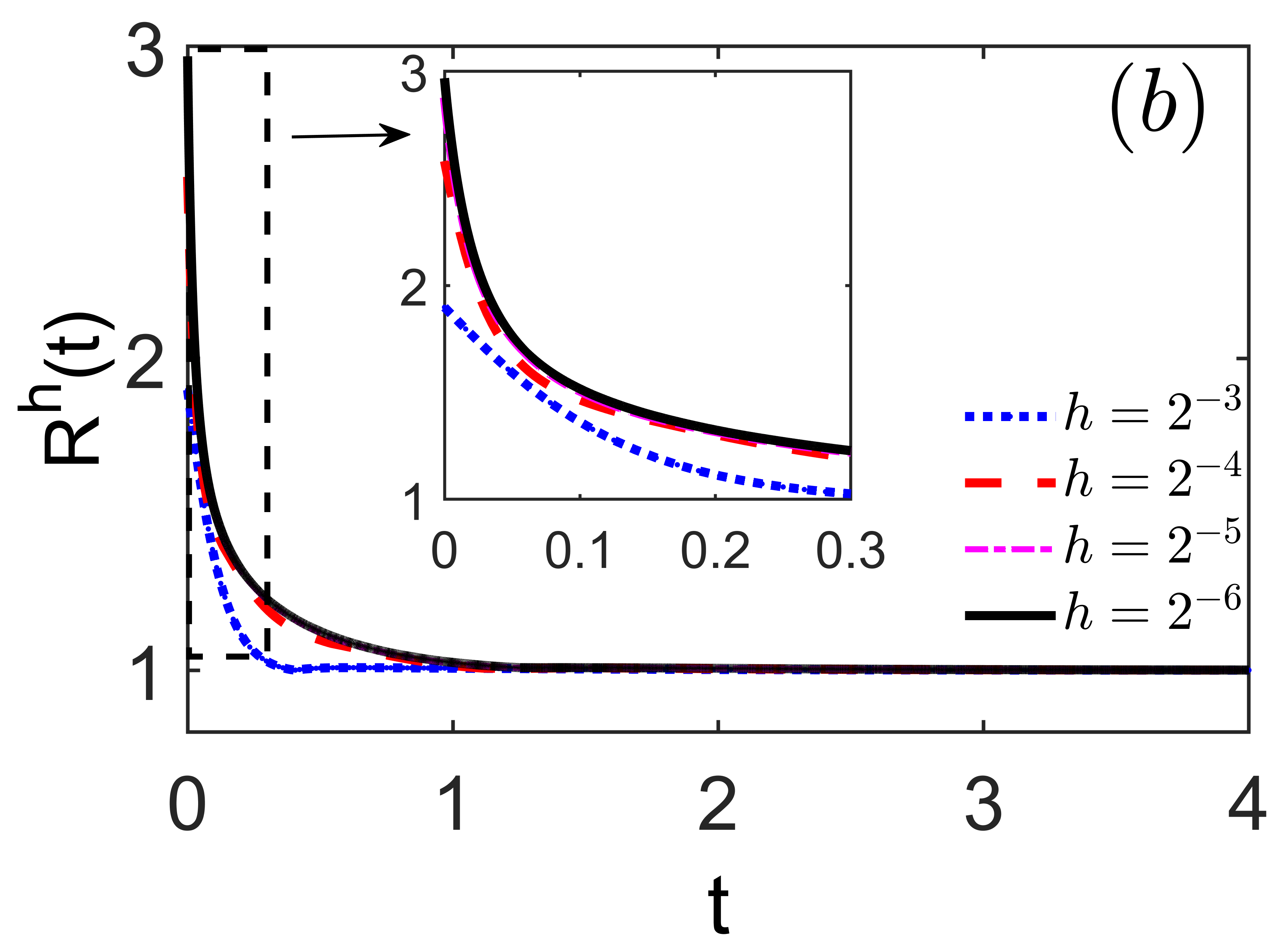}
\includegraphics[width=.32\textwidth]{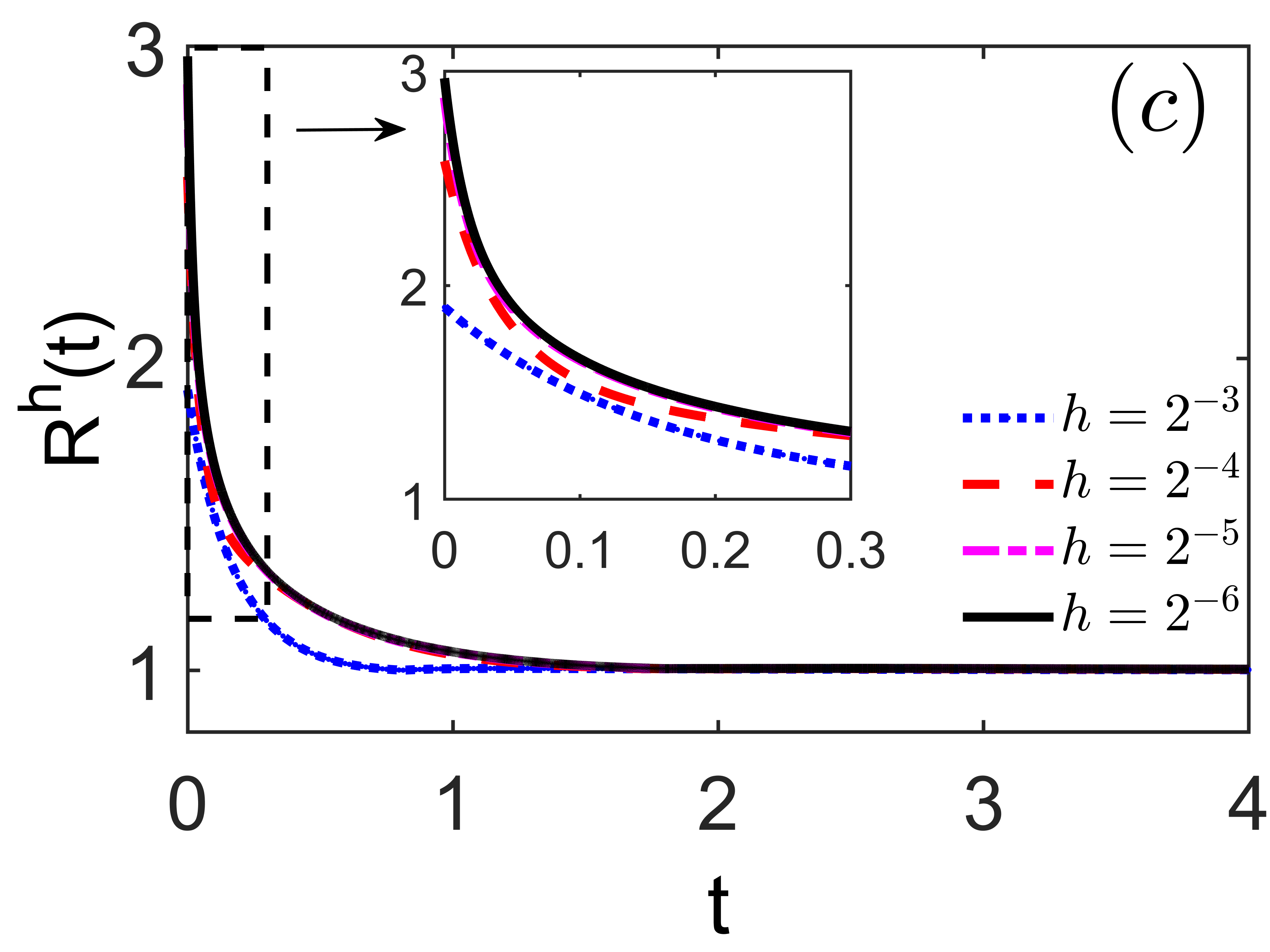}\\
 \vspace{0.1cm}
\includegraphics[width=.32\textwidth]{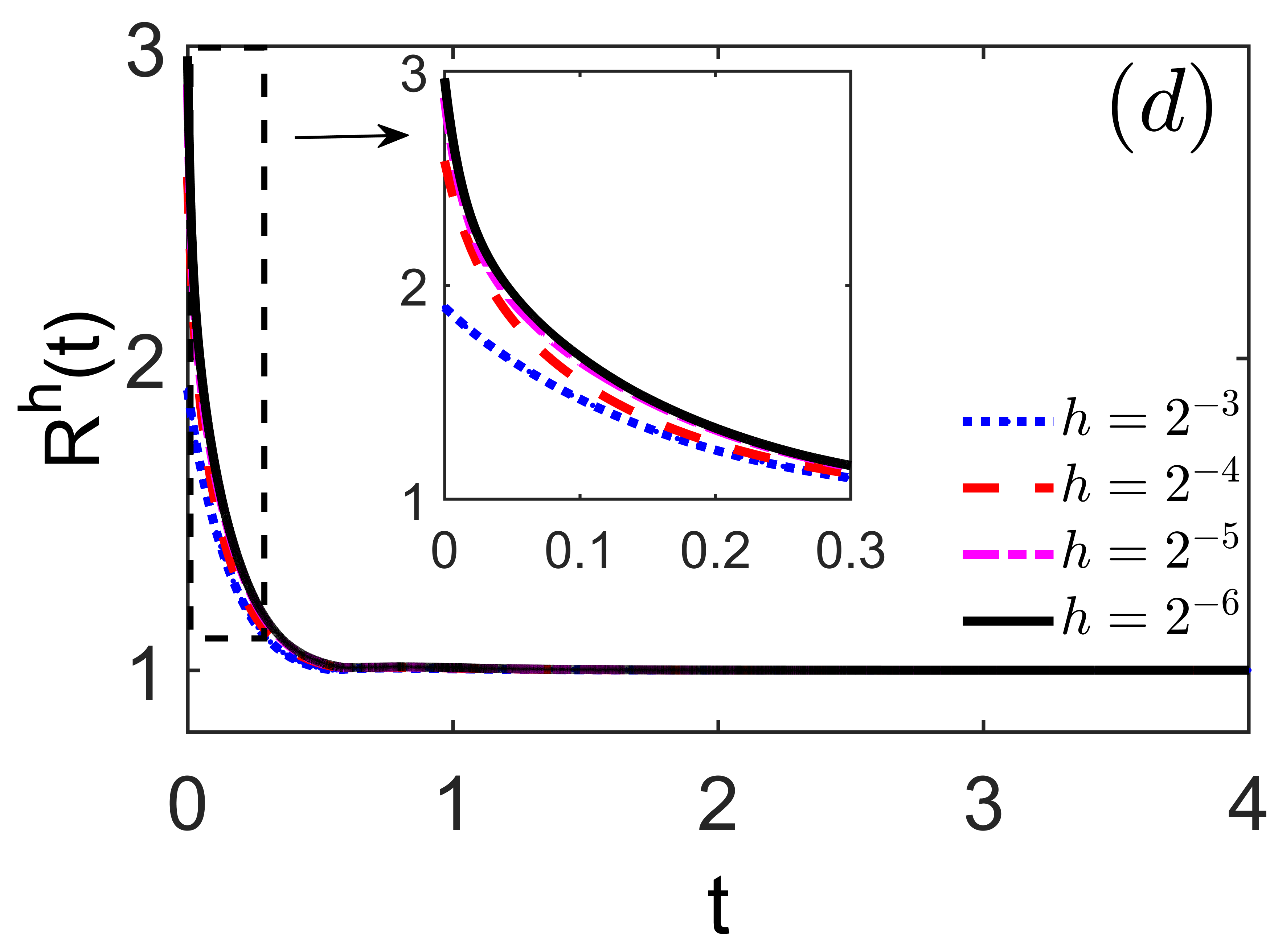}
\includegraphics[width=.32\textwidth]{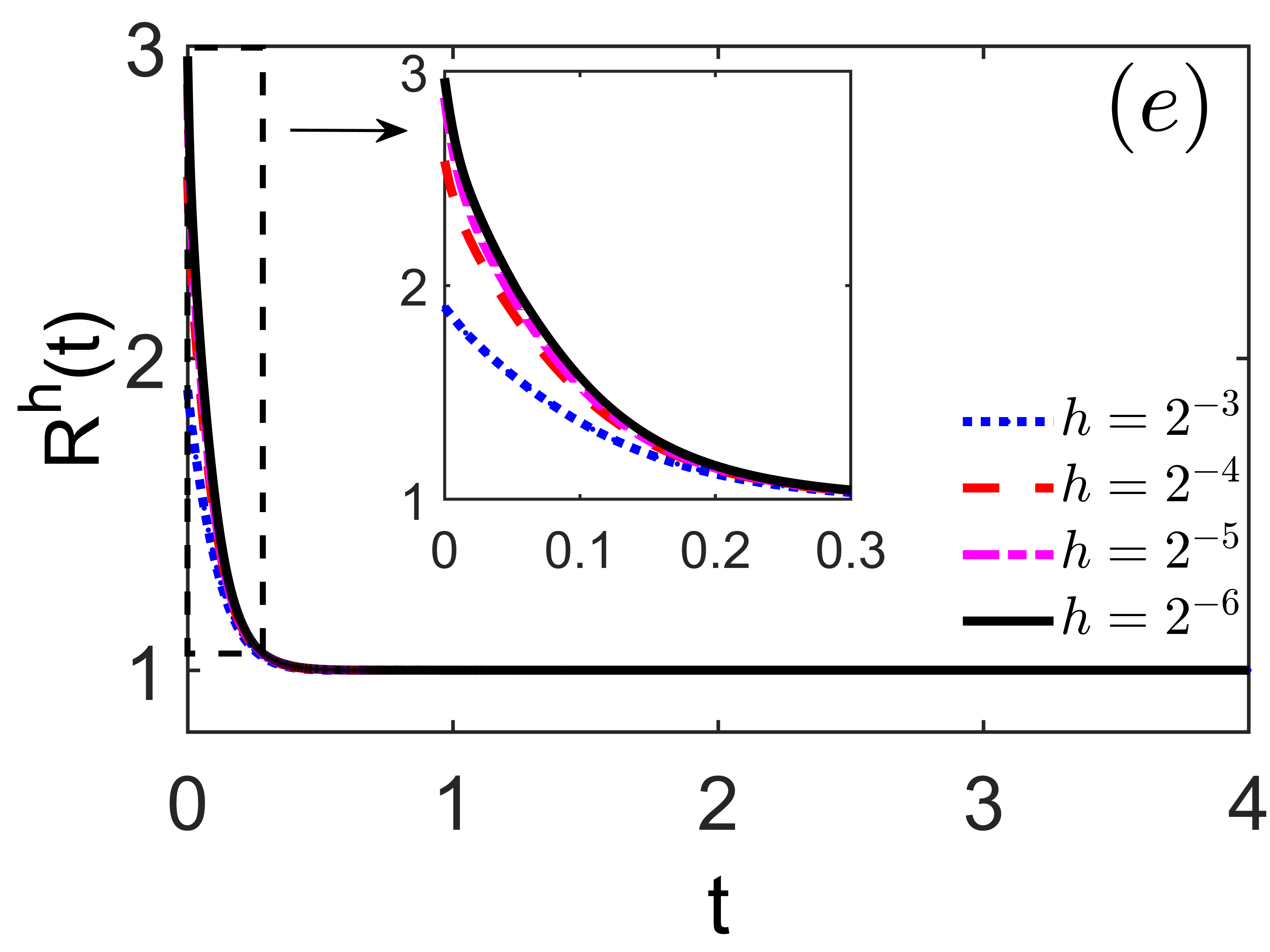}
\includegraphics[width=.32\textwidth]{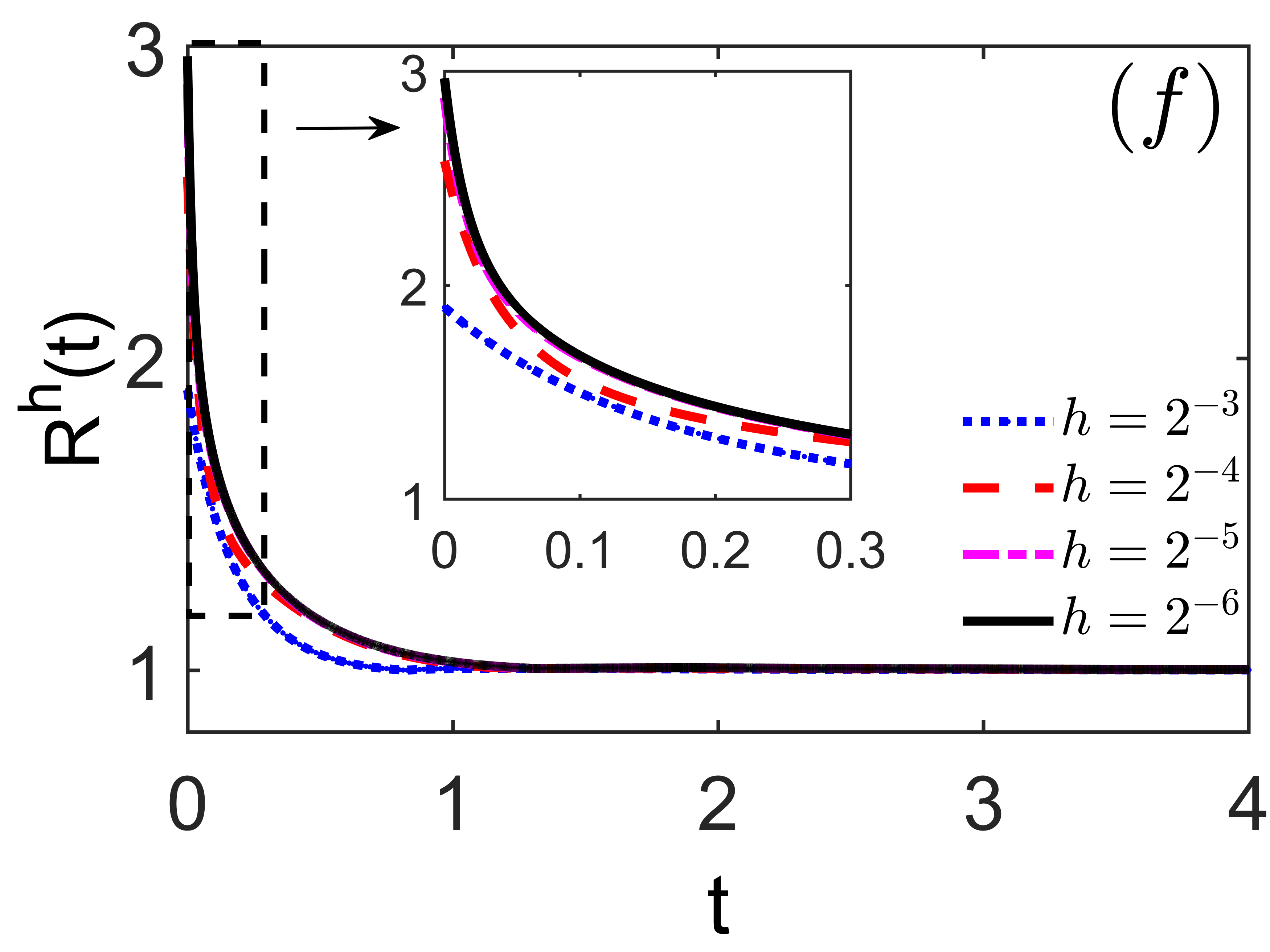}
\caption{ {Time evolutions of the mesh ratio $R^h(t)$ of the SP-PFEM \eqref{Model full eq} with $\tau=h^2$ for different $h$, where we choose the following parameters $\alpha$ and $\beta$: (a) $\alpha=1$, $\beta=-1$; (b) $\alpha=2$, $\beta=-1$; (c) $\alpha=1/3$, $\beta=-1$; (d) $\alpha=-1$, $\beta=1$; (e) $\alpha=-2$, $\beta=1$; (f) $\alpha=-1/3$, $\beta=1$.}}
\label{fig:Rh}
\end{figure}
\subsection{Application of the SP-PFEM \eqref{Model full eq} for morphological evolutions}
 Here we will use the SP-PFEM \eqref{Model full eq} to simulate morphological evolutions of closed curves governed by the area-conserved generalized mean curvature flow \eqref{Model,cont,eq}--\eqref{Model,cont,lag}. The evolutions of the initial ellipse curve $\frac{x^2}{{3^2}}+y^2=1$ for different parameters $\alpha$ and $\beta$ are depicted in Figure \ref{fig:envoleovel}. We observe that all curves evolve into a circle finally.
\begin{figure}[!htb]
\centering
\includegraphics[width=.32\textwidth]{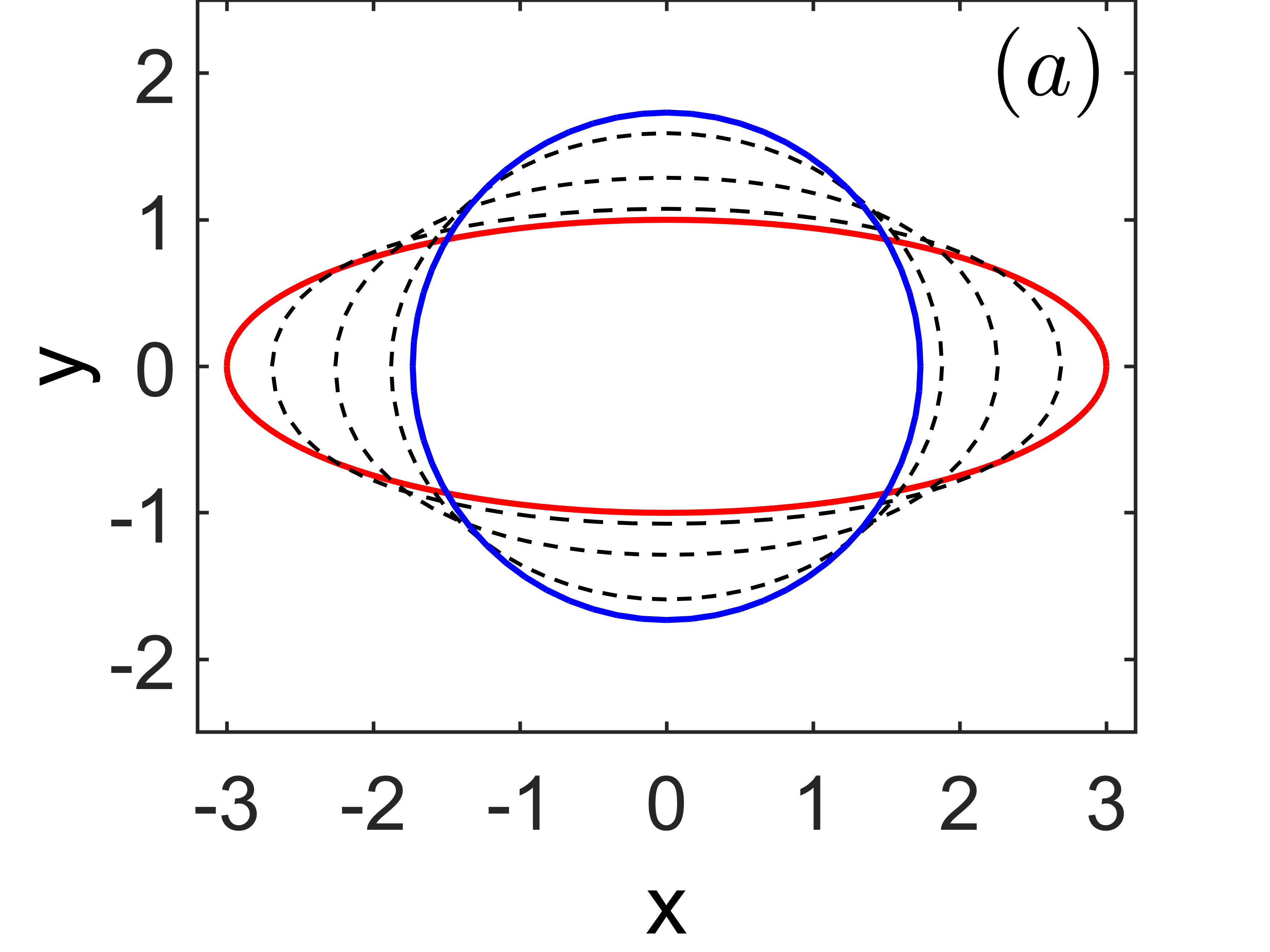}
\includegraphics[width=.32\textwidth]{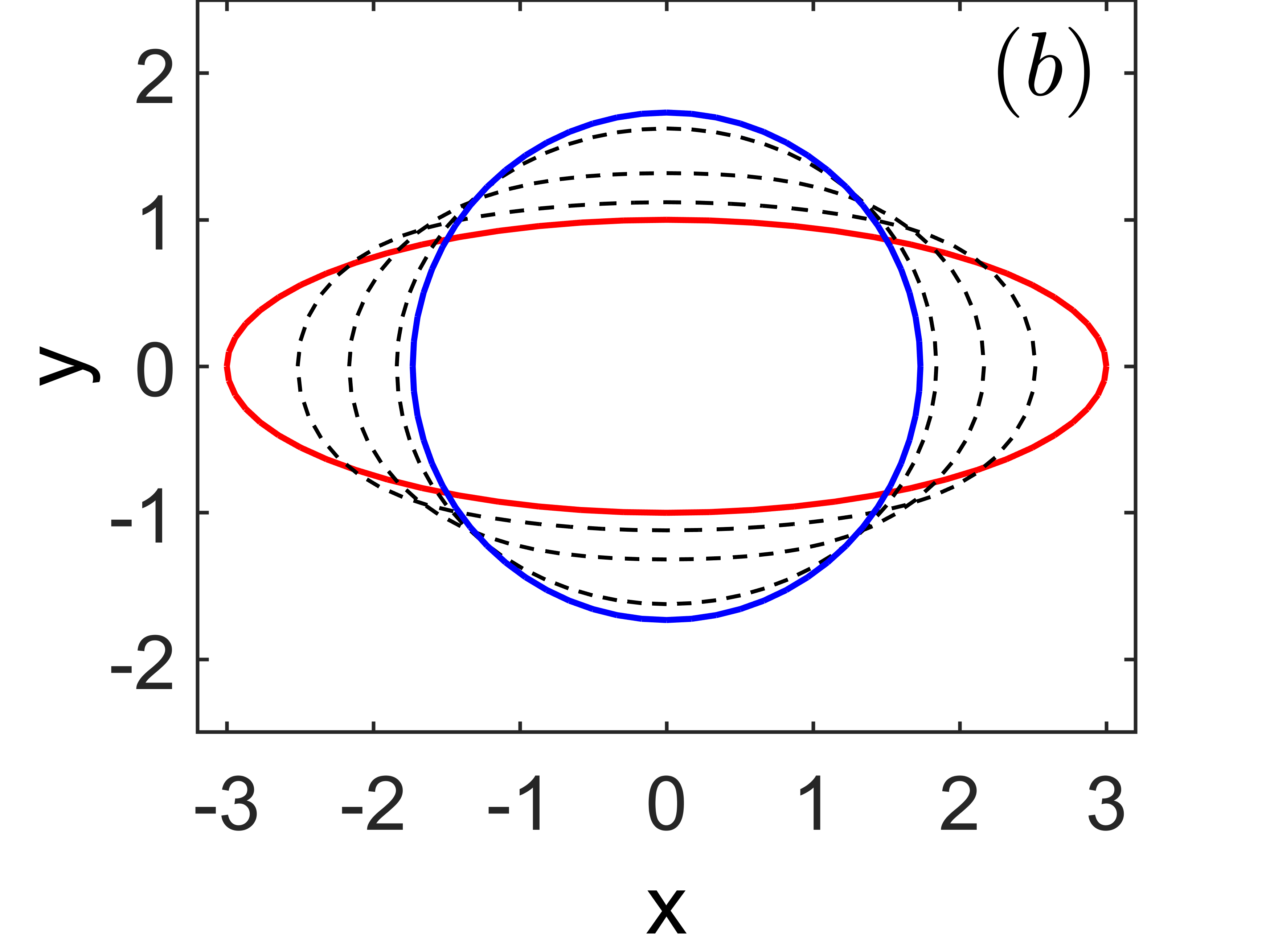}
\includegraphics[width=.32\textwidth]{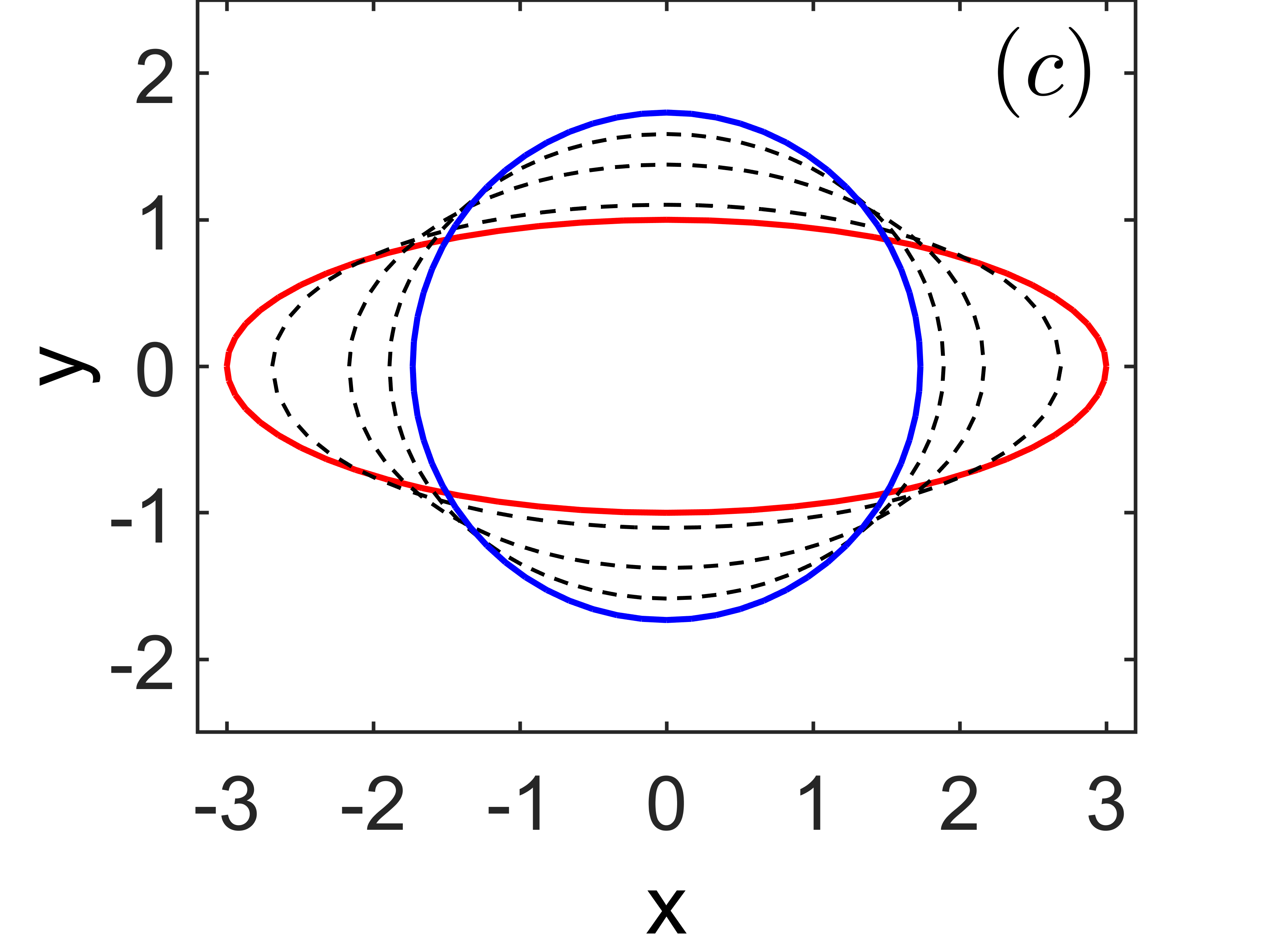}\\
 \vspace{0.2cm}
\includegraphics[width=.32\textwidth]{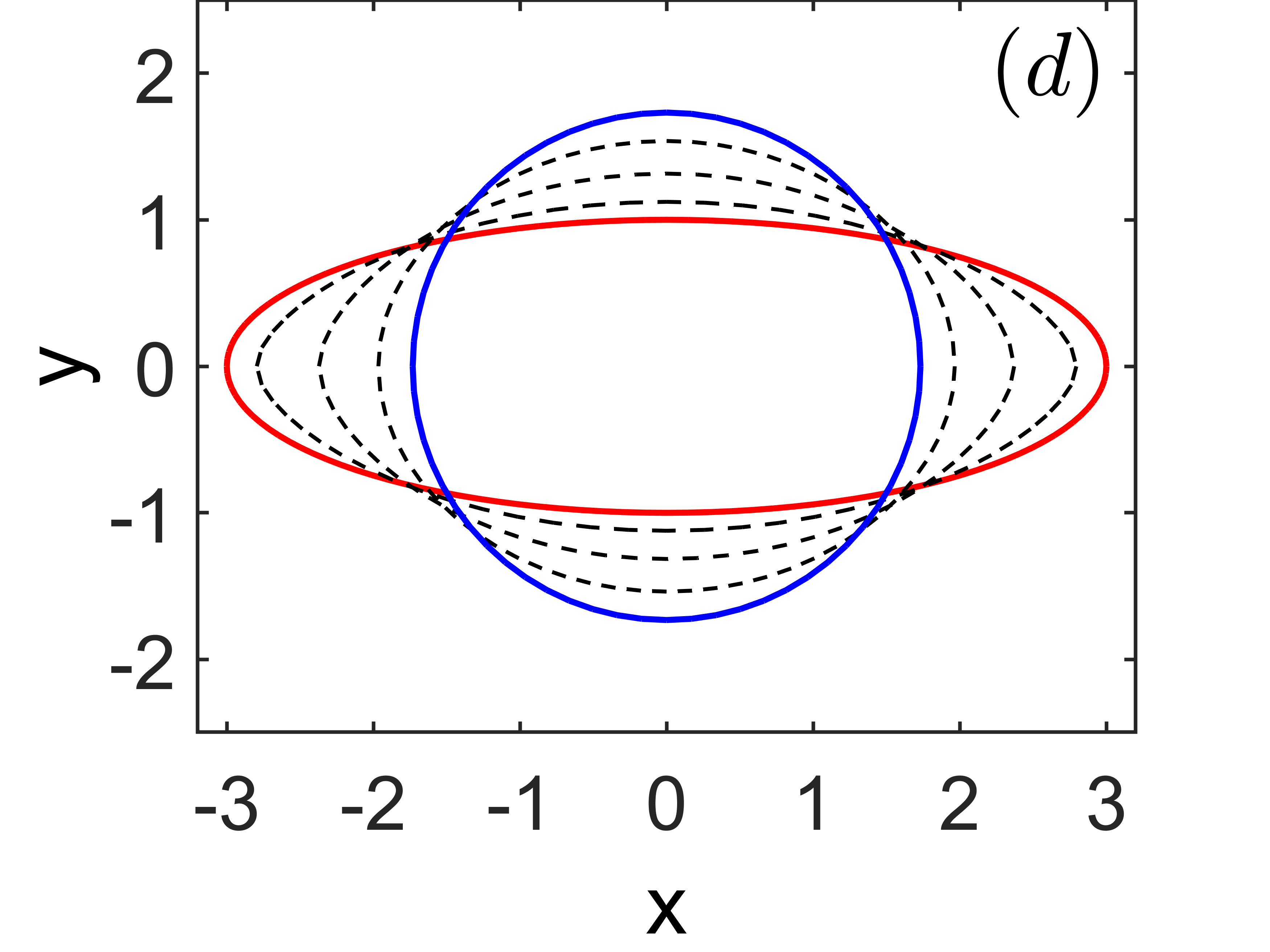}
\includegraphics[width=.32\textwidth]{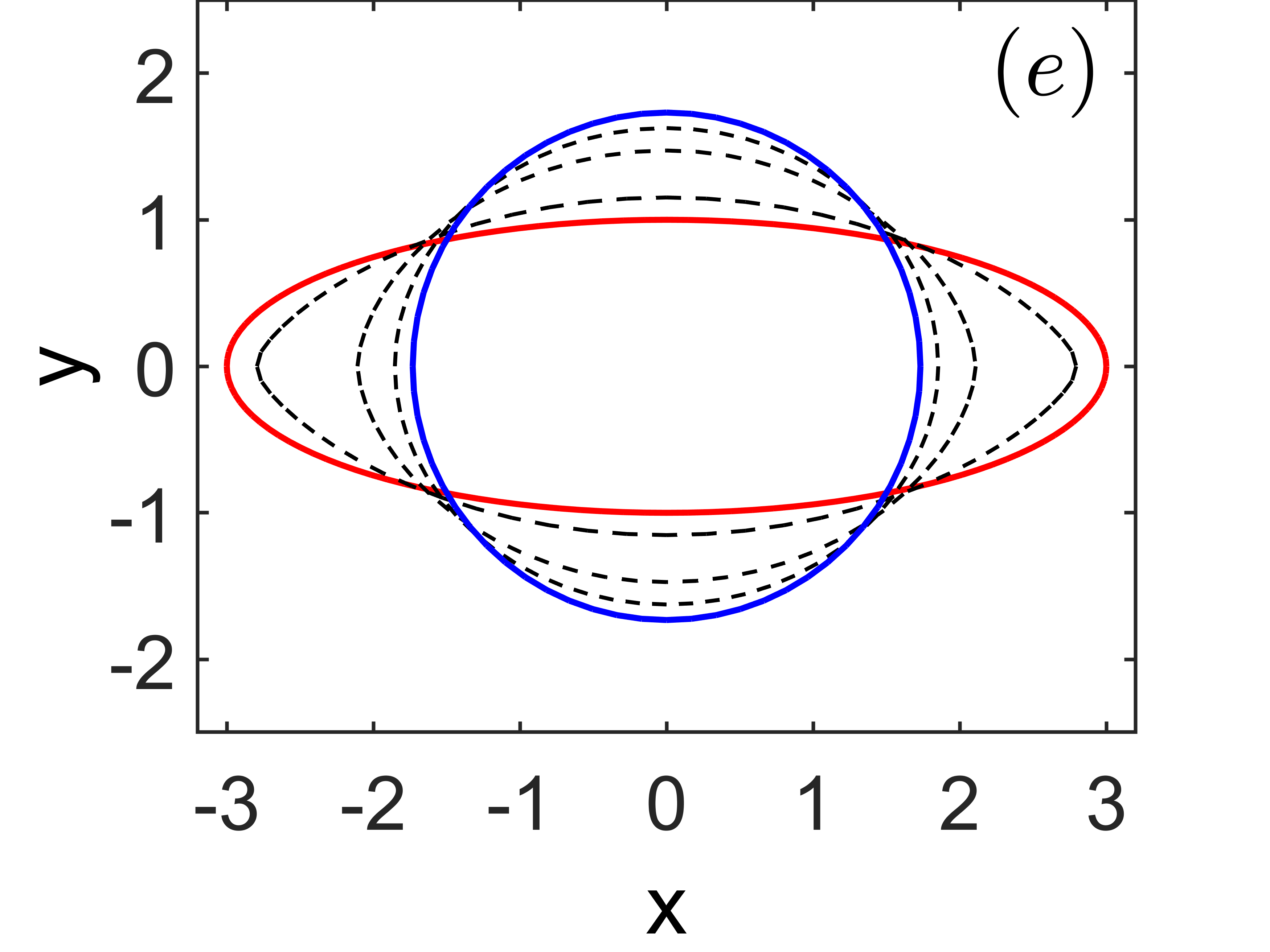}
\includegraphics[width=.32\textwidth]{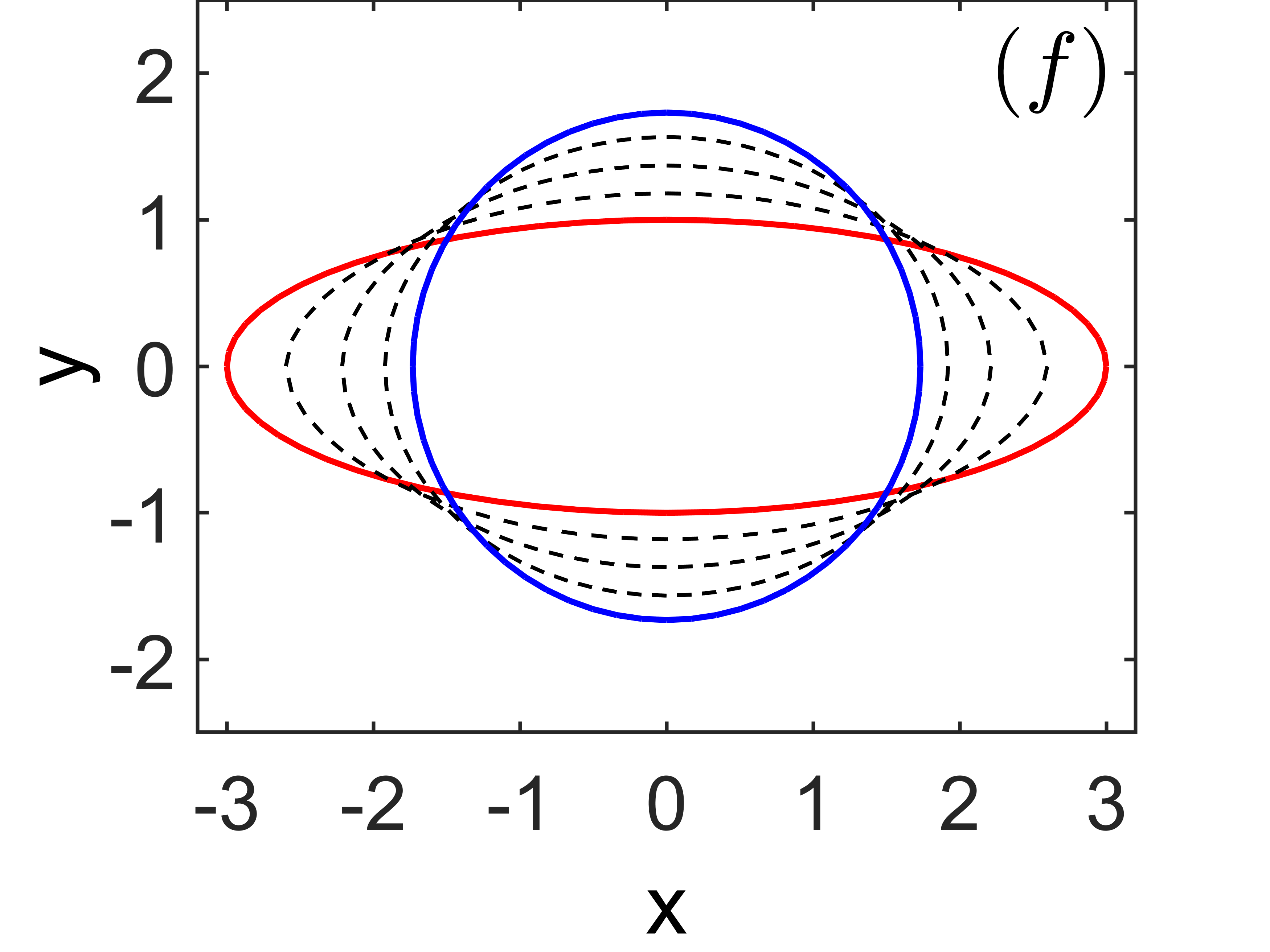}
\caption{{ Morphological evolutions of the initial ellipse curve (red solid line) towards its equilibrium shape (blue solid line) at different times (dashed lines) for different parameters $\alpha$ and $\beta$ as follows: (a) $\alpha=1$, $\beta=-1$, $t\in\{0.25,0.75,2\}$; (b) $\alpha=2$, $\beta=-1$, $t\in\{0.25,0.75,2\}$; (c) $\alpha=1/3$, $\beta=-1$, $t\in\{0.5,2,4\}$; (d) $\alpha=-1$, $\beta=1$, $t\in\{0.05,0.2,0.5\}$; (e) $\alpha=-2$, $\beta=1$, $t\in\{0.01,0.1,0.2\}$; (f) $\alpha=-1/3$, $\beta=1$, $t\in\{0.5,1.25,2.5\}$. Here we choose $h=2^{-6}$ and $\tau=h^2$.}}
\label{fig:envoleovel}
\end{figure}
Furthermore, when $\alpha=1$ and $\beta=-1$, we apply the SP-PFEM \eqref{Model full eq} to three more complex initial curves given by the following cases:
\begin{align}
&\mbox{(Case I):}~~~~
\displaystyle x=(2+\cos(6\theta))\cos\theta, ~~~~
y=(2+\cos(6\theta))\sin\theta,\nonumber\\
&\mbox{(Case II):}~~~
\displaystyle x=\cos\theta,~~~~~~~~~~~~~~~~~~~~~
y=2\sin\theta-1.9\sin^3\theta,\nonumber\\
&\mbox{(Case III):}~
\displaystyle x=\cos\theta,~
y=\frac{1}{2}\sin\theta+\sin (\cos\theta)+(0.2+\sin\theta \sin^2(3\theta))\sin\theta,\nonumber
\end{align}
where $\theta\in [0,2\pi]$. Figure \ref{fig:envolesixflower}, Figure \ref{fig:envoledumbleline} and Figure \ref{fig:envolepowerline} depict the curve evolutions of these initial shapes, respectively. It can be seen in Figure \ref{fig:envolesixflower} that the six petals of the curve (Case I) gradually disappear and a circle is finally formed. Analogous numerical results for the complex curves (Case II) and (Case III) can be observed in Figure \ref{fig:envoledumbleline} and Figure \ref{fig:envolepowerline}, respectively. These numerical examples further demonstrate the applicability and reliability of the SP-PFEM \eqref{Model full eq}.

\begin{figure}[!htb]
\centering
  \includegraphics[width=.68\textwidth]{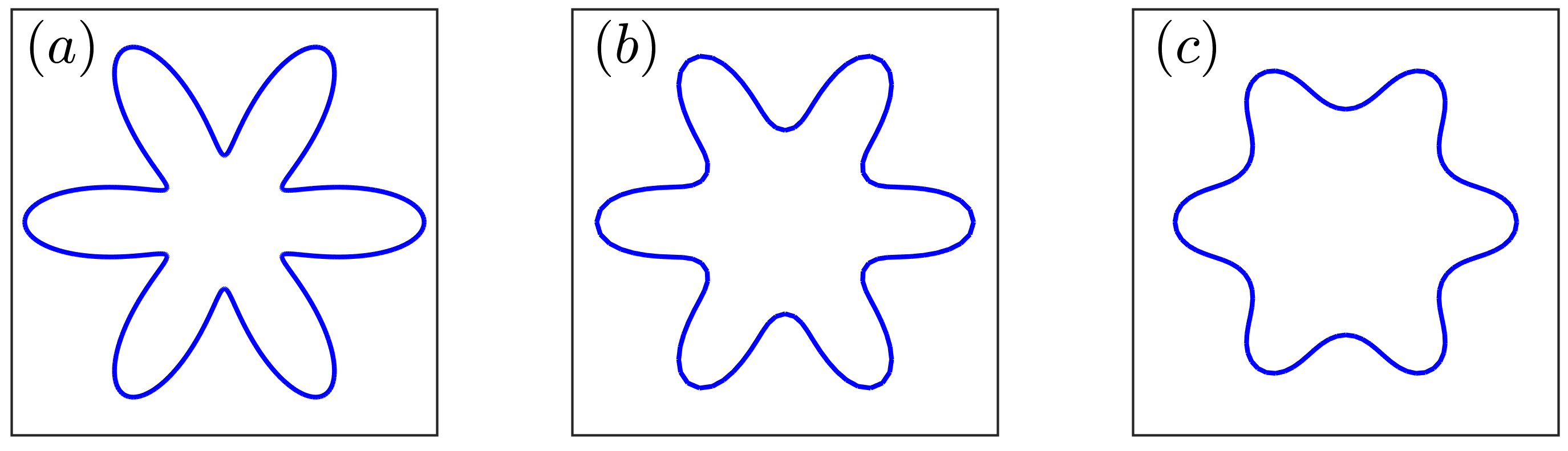}\\
   \includegraphics[width=.68\textwidth]{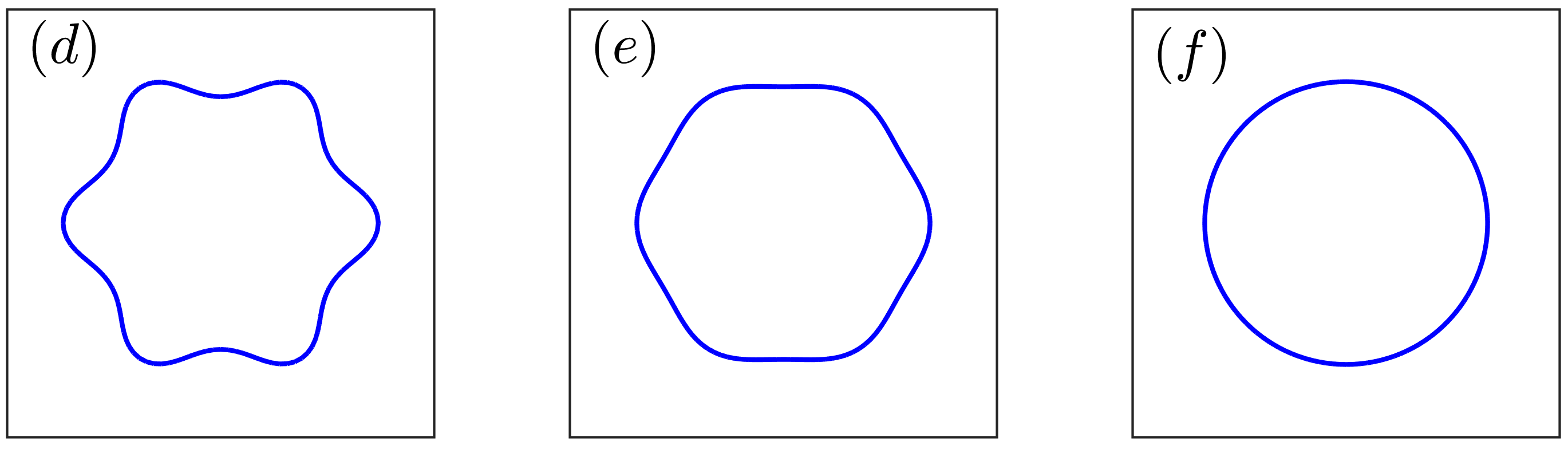}
\caption{{Evolution of the initial curve (Case I) at different times: (a) $t=0$, (b) $t=0.05$, (c) $t=0.15$, (d) $t=0.25$, (e) $t=0.4$, (f) $t=1$. Parameters are chosen as $\alpha=1$, $\beta=-1$, $h=2^{-7}$ and $\tau=h^2$.}}
\label{fig:envolesixflower}
\end{figure}
%%%这个版本可以调png，那我就不用生成pdf了
\begin{figure}[!htb]
\centering
  \includegraphics[width=.68\textwidth]{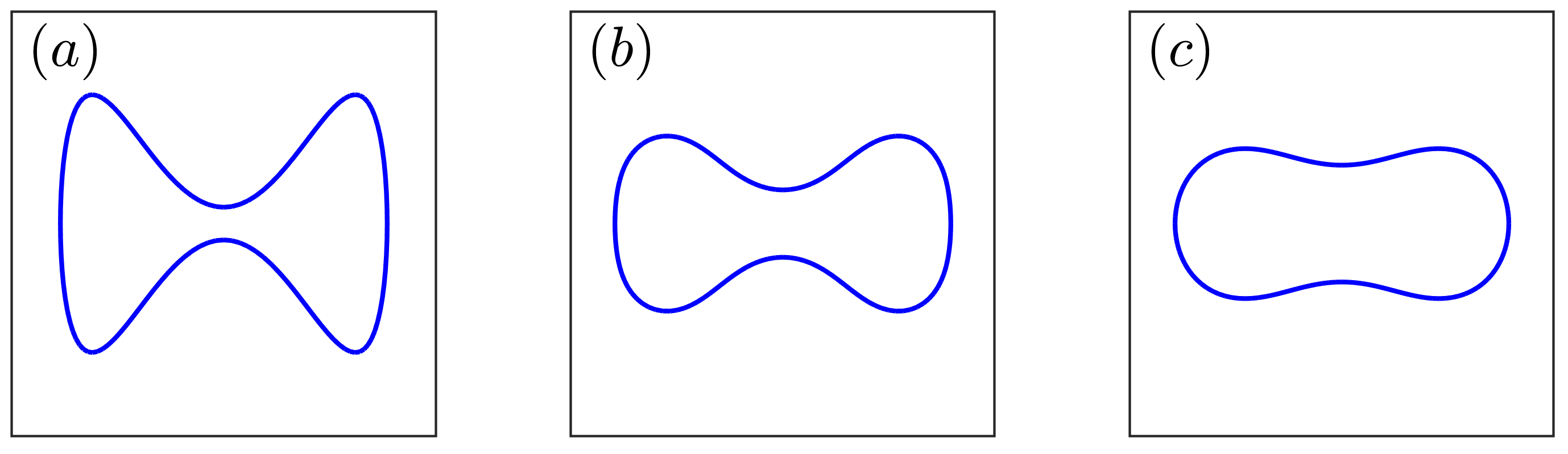}\\
   \includegraphics[width=.68\textwidth]{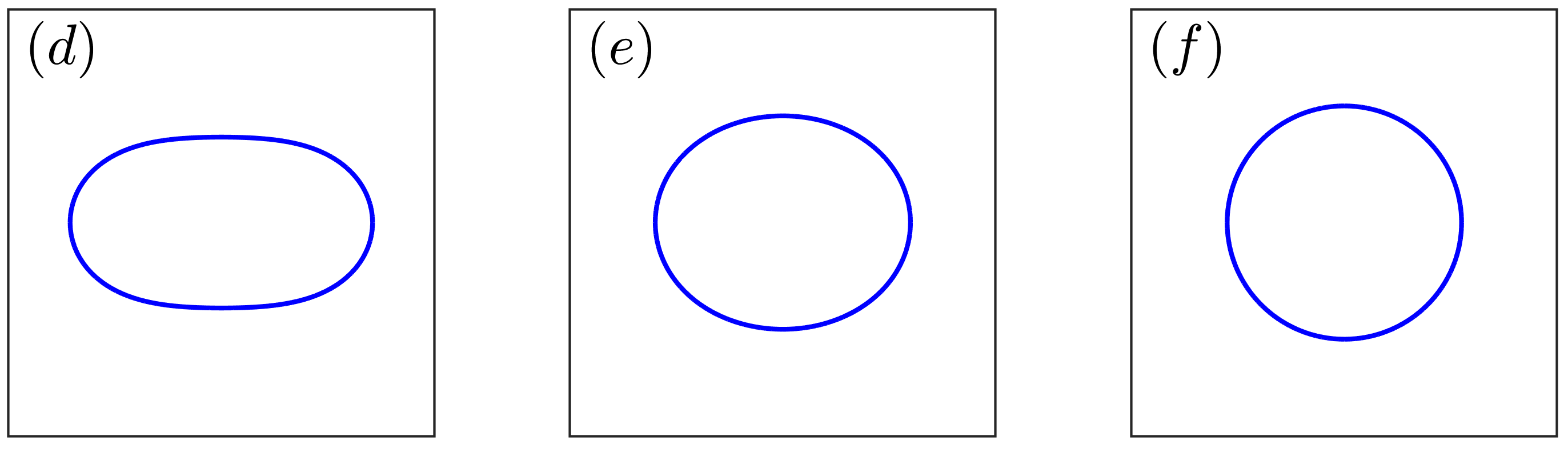}
\caption{{Evolution of the initial curve (Case II) at different times: (a) $t=0$, (b) $t=0.05$, (c) $t=0.1$, (d) $t=0.2$, (e) $t=0.4$, (f) $t=1$. Parameters are chosen as $\alpha=1$, $\beta=-1$, $h=2^{-7}$ and $\tau=h^2$.}}
\label{fig:envoledumbleline}
\end{figure}

\begin{figure}[!htb]
\centering
  \includegraphics[width=.68\textwidth]{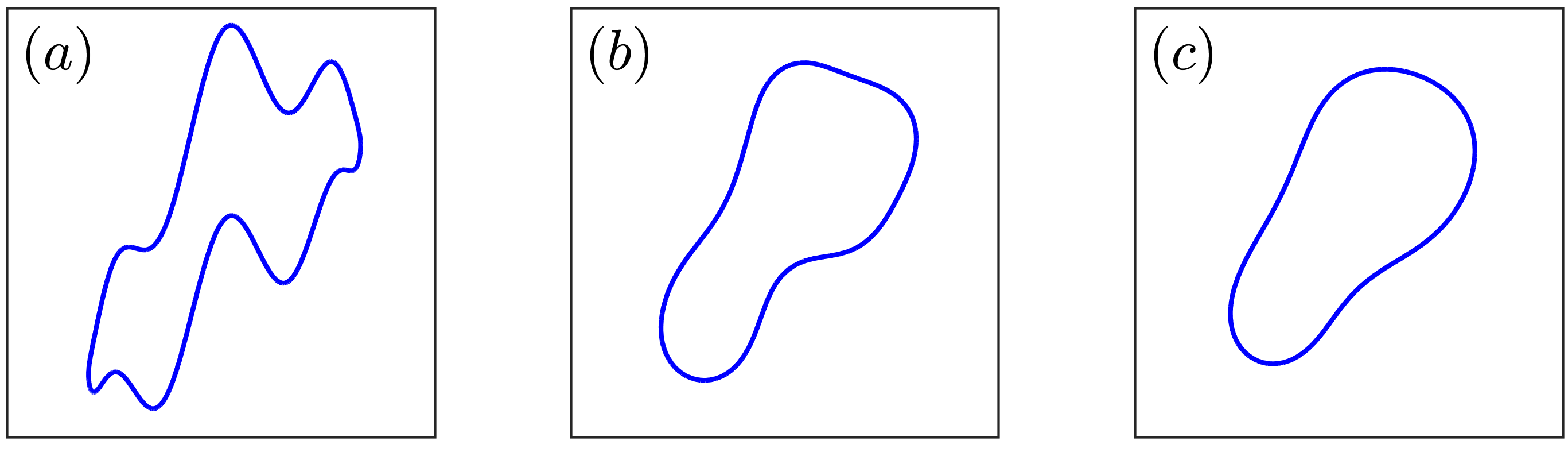}\\
   \includegraphics[width=.68\textwidth]{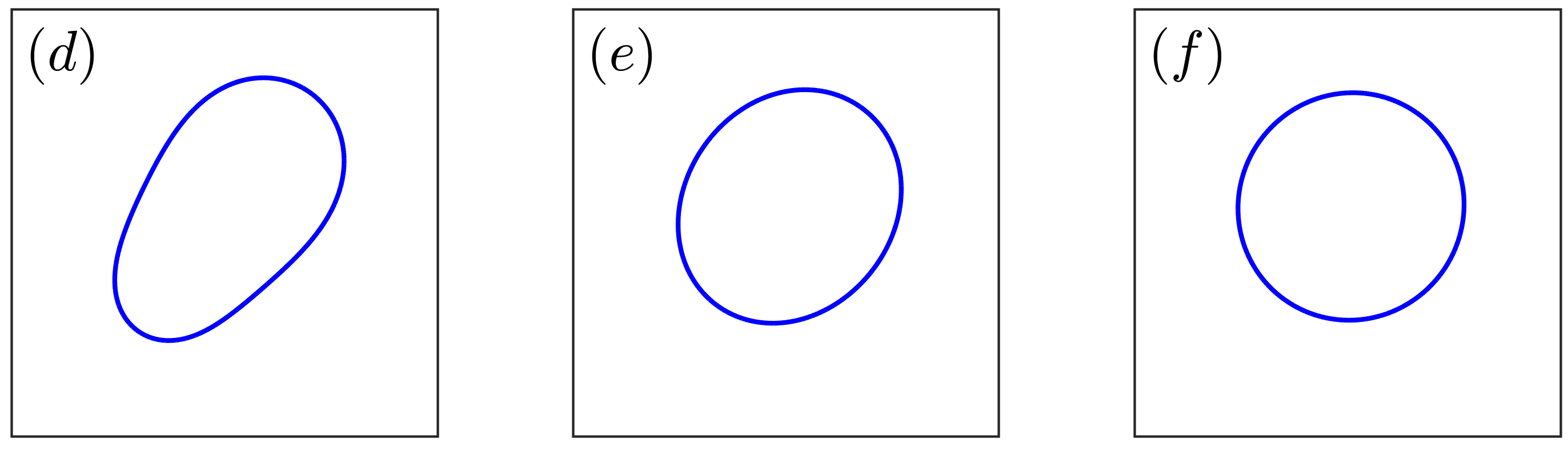}
\caption{{Evolution of the initial curve (Case III) at different times: (a) $t=0$, (b) $t=0.05$, (c) $t=0.1$, (d) $t=0.2$, (e) $t=0.5$, (f) $t=1$. Parameters are chosen as $\alpha=1$, $\beta=-1$, $h=2^{-7}$ and $\tau=h^2$.}}
\label{fig:envolepowerline}
\end{figure}
\section{Conclusions}
\sloppy{}
This study investigated numerical approximation of the area-conserved generalized mean curvature flow. We present a variational formulation and prove that it preserves two fundamental geometric structures of flows: the conservation of the area enclosed by the closed curve and the decrease of the perimeter of the curve. Then the variational problem is discretized in space by PFEM, and the structure-preserving property is also established for the semi-discrete scheme. Finally, a fully-discrete scheme is constructed by using the backward Euler method in time and piecewise linear parametric finite element approximation in space. We prove that the proposed method preserves the two essential geometric structures simultaneously at the discrete level. In the end, numerical results confirm these conclusions and suggest that the proposed SP-PFEM is second-order accurate in space and enjoys asymptotic equal mesh distribution during the evolution. Therefore,  our work provides a reliable and powerful tool for the simulation of area-conserved generalized mean curvature flow in 2D. In the future, we will further explore the extension of the new SP-PFEM for volume-preserving generalized mean curvature flow in three dimensions.

\bibliographystyle{spmpsci}      % mathematics and physical sciences
\bibliography{thebibJSC}

\section*{Statements and Declarations}
\section*{Funding}
\sloppy{}

This research was supported by National Natural Science Foundation of China (Grant No. [11701523]).
\section*{Competing Interests}
\sloppy{}

The authors have no relevant financial or non-financial interests to disclose.
\section*{Author Contributions}
\sloppy{}

All authors contributed to the study conception and design.

\textbf{Lifang Pei:} Paper writing, Theoretical analysis, Numerical tests.

\textbf{Yifei Li: } Theoretical analysis, Writing-review and editing.

All authors read and approved the final manuscript.
\section*{Data Availability}
\sloppy{}

%Data will be made available on reasonable request.
The data generated or analyzed during the current study are available from the corresponding author on reasonable request.

\end{document}